\newcommand{\bc}{\color{blue}}
\def\a{{\alpha}}
\def\b{{\beta}}
\def\g{{\gamma}}
\def\d{{\delta}}
\def\f{{\phi}}
\begin{document}

\newtheorem{theorem}{Theorem}[section]
\newtheorem{lemma}[theorem]{Lemma}
\newtheorem{proposition}[theorem]{Proposition}
\newtheorem{corollary}[theorem]{Corollary}
\newtheorem{problem}[theorem]{Problem}
\newtheorem{construction}[theorem]{Construction}

\theoremstyle{definition}
\newtheorem{defi}[theorem]{Definitions}
\newtheorem{definition}[theorem]{Definition}
\newtheorem{remark}[theorem]{Remark}
\newtheorem{example}[theorem]{Example}
\newtheorem{question}[theorem]{Question}
\newtheorem{comment}[theorem]{Comment}
\newtheorem{comments}[theorem]{Comments}

\newtheorem{discussion}[theorem]{Discussion}

\renewcommand{\thedefi}{}

\long\def\alert#1{\smallskip{\hskip\parindent\vrule%
\vbox{\advance\hsize-2\parindent\hrule\smallskip\parindent.4\parindent%
\narrower\noindent#1\smallskip\hrule}\vrule\hfill}\smallskip}

\def\ff{\frak}
\def\tf{torsion-free}
\def\fg{finitely generated }
\def\Spec{\mbox{\rm Spec}}
\def\Proj{\mbox{\rm Proj }}
\def\hgt{\mbox{\rm ht }}
\def\type{\mbox{ type}}
\def\Hom{\mbox{\rm Hom}}
\def\rank{\mbox{ rank}}
\def\Ext{\mbox{\rm Ext}}
\def\Tor{\mbox{\rm Tor}}
\def\Ker{\mbox{\rm Ker }}
\def\Max{\mbox{\rm Max}}
\def\End{\mbox{\rm End}}
\def\xpd{\mbox{\rm xpd}}
\def\Ass{\mbox{\rm Ass}}
\def\emdim{\mbox{\rm emdim}}
\def\epd{\mbox{\rm epd}}
\def\repd{\mbox{\rm rpd}}
\def\ord{\mbox{\rm ord}}
\def\Fdim{\mbox{\rm Fdim}}
\def\Mod{\mbox{\rm Mod-}}

\def\DD{{\mathcal D}}
\def\EE{{\mathcal E}}
\def\FF{{\mathcal F}}
\def\GG{{\mathcal G}}
\def\HH{{\mathcal H}}
\def\II{{\mathcal I}}
\def\LL{{\mathcal L}}
\def\MM{{\mathcal M}}
\def\PP{{\mathcal P}}

\title[De-noetherizing Cohen--Macaulay rings]{De-noetherizing Cohen--Macaulay rings}

 \thanks{2010 {\it Mathematics Subject Classification.} Primary 13H10, 13F99. Secondary 13C13.}

\keywords{Perfect, subperfect, $n$-subperfect rings; regular sequence, unmixed, Cohen--Macaulay rings.}


\author{L\'aszl\'o Fuchs}
\address{Department of Mathematics, Tulane University, New Orleans, Louisiana 70118, USA}
\email{fuchs@tulane.edu}

\author{Bruce Olberding}
\address{Department of Mathematical Sciences, New Mexico State University, Las Cruces, New Mexico 88003, USA}
\email{olberdin@nmsu.edu}

\maketitle

\begin{center}
\today
\end{center}

 \begin{abstract}    We introduce a new class of  commutative {non-noetherian} rings, called $n$-subperfect rings,  generalizing the almost perfect rings that have been studied recently by Fuchs--Salce \cite{FS}.  For an integer $n \ge 0$, the ring $R$ is  $n$-subperfect if every  maximal regular sequence in $R$ has length $n$ and the total ring of quotients of $R/I$ for any ideal $I$ generated by a regular sequence is a perfect ring in the sense of Bass. We define an extended  Cohen--Macaulay ring as a  commutative ring $R$ that has noetherian prime spectrum and each localization $R_M$ at a maximal ideal $M$ is ht($M$)-subperfect. In the noetherian case,  these are precisely the classical  Cohen--Macaulay rings.  Several relevant properties are proved reminiscent of those shared by Cohen--Macaulay rings.
 
   \end{abstract} \medskip


 \section{Introduction} \medskip

   The Cohen--Macaulay rings play extremely important roles in most branches of commutative algebra. They have a very rich, fast expanding theory and a wide range of applications where the noetherian hypothesis is essential  in most aspects.     Cohen--Macaulay rings $R$ are usually defined in one of the following ways: \smallskip
    
      (a) $R$ is a noetherian ring in which ideals generated by elements of regular sequences are unmixed (i.e. have no embedded primes).
   
   (b) $R$ is a noetherian ring such that the grade (the common length of maximal regular sequences in $I$)  of every proper ideal $I$ equals the height of $I$.  \smallskip

    It is natural to {search for generalizations} to non-noetherian rings that still share many of the useful properties with  Cohen--Macaulay rings. As a matter of fact, there have been several attempts for generalization, a few reached publication, see \cite{G}, \cite{H}, \cite{HM}, \cite{AT}. In each of these generalizations, the noetherian condition was replaced by one without close connection to the noetherian property.   We believe that a generalization that is closer to the noetherian condition might allow for new applications and capture {somewhat} more features of Cohen--Macaulay rings than the generalizations in the cited references.

         In this note, 
     we are looking for a kind of generalization that is very natural and is as close to  Cohen--Macaulay rings as possible, but general enough to be susceptible of applications. We break tradition and choose a different approach: one  that does not adhere to any of the classical defining properties.  Our strategy is to rephrase the definition to one that does not  explicitly require the noetherian condition, to replace the condition that implies the noetherian character by a weaker one, and after doing so, to use the modified definition as the base of generalization.  
   
   It is not difficult to check that an ideal $I$ of a noetherian ring $R$   that is generated by a regular sequence  is unmixed if and only if the ring of quotients of $R/I$ is an artinian ring. It is likewise easy to see that for domains by dropping the noetherian requirement in (a) and replacing the unmixed condition by assuming the artinian property of the ring of quotients of $R/I$, we obtain a genuine characterization of noetherian  Cohen--Macaulay domains.
    Using this {observation} as a point of departure, we follow our strategy, and want to  de-noetherize the artinian property. But nothing is simpler than that: we just replace the descending chain condition on \emph{all} ideals by the descending condition on \emph{\fg} ideals. We do not stop here, but recall that the descending condition on \fg ideals is equivalent to the same condition on principal ideals \cite[Theorem 2]{Bj}, and the latter condition characterizes the \emph{perfect} rings, introduced by Bass \cite{Ba}. In conclusion, \emph{we will generalize  Cohen--Macaulay rings by replacing `artinian' by `perfect'.}  Accordingly, we will call a ring $R$ (with {maximal} regular sequences of lengths $n$) \emph{$n$-subperfect} {$(n \ge 0)$} if the ring of quotients of the ring $R/I$ is perfect for every proper ideal $I$ generated by a regular sequence, and add right away that a  $0$-subperfect ring is the same as a perfect ring in the sense of Bass. 
    
    By an \emph{extended  Cohen--Macaulay ring} we shall mean a commutative ring $R$ that has noetherian prime spectrum and each localization $R_M$ at a maximal ideal $M$ is ht($M$)-subperfect. In our discussion we will concentrate on the $n$-subperfect case {for a fixed $n \ge 0$} {(which is more general than the local case)}.   
   
     Asgharzadeh and Tousi \cite[Theorem 3.1]{AT} review and compare the various non-noetherian generalizations of Cohen--Macaulay rings in the literature and add their own variants. In a sense, our generalization lies properly between the classical Cohen--Macaulay rings and their generalizations in the literature, at least as far as zero-dimensional rings are concerned. In fact, a zero-dimensional ring is Cohen--Macaulay if and only if it is artinian, while each of the generalizations  listed  in \cite{AT} includes all zero-dimensional rings in their versions of generalized Cohen--Macaulay rings. In our generalization,  in the class of zero-dimensional rings only the perfect rings qualify.  ({A main} difference is in the nilradical: T-nilpotency is properly between being just nil and even nilpotent.) Furthermore, {the} one-dimensional integral domains are included in all of the previously published generalizations. For the  Cohen--Macaulayness however, such domains ought to have artinian factor rings modulo any non-zero ideal, while for our $1$-subperfectness these factors are required to be perfect rings.   Being closer to the classical version, our generalization is expected to share more analogous properties with Cohen--Macaulay rings than the previous generalizations, yet capture fewer classes of rings.  To avoid confusion involving these different generalizations of Cohen--Macaulay rings, we assume implicitly in what follows that  the term ``Cohen--Macaulay ring'' designates a {\it noetherian} Cohen--Macaulay ring.  
   
   Let us point out some relevant features of $n$-subperfect rings that support our claim that this generalization has a number of properties that are fundamental for Cohen--Macaulay rings in the noetherian setting. ($n$ can be any non-negative integer in the following list.)  
   \smallskip
   
       {$\bullet$ A ring $R$ is $n$-subperfect if and only if for each regular sequence $x_1,\ldots,x_i$ in $R$, the ring $R/(x_1,\ldots,x_i)R$ is $(n-i)$-subperfect  (Proposition \ref{factor prop}). }    
  
   $\bullet$  A ring $R$ is $n$-subperfect if and only if its spectrum is noetherian and the localizations $R_M$ are $n$-subperfect for all maximal ideals $M$ (Corollary~\ref{local global cor}). 
   
       $\bullet$ An $n$-subperfect ring is catenary, equidimensional, and of Krull dimension $n$ (Corollary \ref{Krull}).

  $\bullet$ A  noetherian  ring  is Cohen--Macaulay  if and only if 
  it is an extended Cohen--Macaulay ring (Corollary \ref{CM char}).

       $\bullet$    The grade of a proper ideal $I$ of an $n$-subperfect ring $R \ ($the length $t$ of the longest regular sequences contained in $I)$  is the smallest integer $t$ such that $\Ext_R^t(R/I, R) \neq 0$ (Theorem \ref{ideal}). 
        
         $\bullet$ If a finite group $G$ operates on an $n$-subperfect ring $R$ and its order is a unit in $R$, then the set $R^G$ of ring elements fixed under $G$ is {an $n$}-subperfect ring (Corollary \ref{inv}).

        $\bullet$  The polynomial ring $R[X_1, \dots, X_n]$, or any of its Veronese subrings, is $n$-subperfect if and only if $R$ is a perfect ring (Theorems \ref{poly n-subperfect} {and ~\ref{Veronese}). 
        
       {$\bullet$  The nilradical $N$ of an  $n$-subperfect ring $R$ is  T-nilpotent, and $R/N$ is a Goldie ring (Lemma \ref{H}, Theorem \ref{Goldie}).}
     \smallskip

     Our definition leaves ample room for specializations: additional conditions might be added that are not strong enough to enforce the noetherian property, but lead to more pleasant  properties of the resulting generalization (e.g. restriction of the  finitistic dimension or the $h$-local property might be such a condition).  {Examples for $n$-subperfect rings that are not Cohen--Macaulay are abundant; see Section 8.}

   Our main goal was to get acquainted with the fundamental properties  of $n$-perfect rings that are analogous to well-known features of Cohen--Macaulay rings. Working in the non-noetherian situation and in the uncharted territory of subperfect rings meant a challenge in several proofs. We focus our attention to $n$-subperfectness (which suffices to explore the general case) in order to avoid  dealing with the complicated general situation corresponding to global Cohen--Macaulay rings that would make the main features less transparent. Occasionally, when it does not obscure  
the main ideas, we work under the global analogue of Cohen-Macaulay rings; these are the regularly subperfect rings defined in Section 2.    (See Corollary~\ref{CM char}.)}
         
  While perhaps less familiar in commutative algebra, perfect rings, the cornerstone of our approach, appear throughout the literature on modules and associative algebras. We review these rings briefly in the next section, but see, for example, Bass \cite{Ba} and Lam \cite{La} for more background.    
   As an application of our approach, we obtain  a well-developed Cohen--Macaulay theory of regular sequences in polynomial rings over perfect rings. Thus, while perfect rings help illuminate the workings of Cohen--Macaulay rings, Cohen--Macaulay rings in turn {might} help shed new light on the  class of perfect rings.   
      
     \smallskip
     

\section{Definitions and Notations}   
  
   All rings considered here are commutative.   We mean by a {\it perfect ring} a ring over which  flat modules are projective.  
    Most of the following characterizations of perfect commutative rings can be found in Bass {\cite[Theorem P]{Ba}} and  Lam \cite[Theorems 23.20, 23.24]{La}. Recall that a module $M$ is {\it semi-artinian} if every non-zero {epic image} of $M$ contains a simple submodule.

   \begin{lemma}  \label{perf}  The following are equivalent for  a commutative ring $R$: \smallskip

{\rm (a)} $R$ is a perfect ring;
 
{\rm (b)} $R$ satisfies the descending chain condition on principal ideals;

{\rm (c)} $R$ is a finite direct product of local rings with T-nilpotent maximal ideals;

{\rm (d)} $R$ is semilocal and the localization $R_P$ is perfect for every maximal ideal $P$;

{\rm (e)} $R$ is semilocal and semi-artinian;

{\rm (f)} the finitistic dimension $\Fdim(R) \ ($supremum of finite projective dimensions of $R$-modules$)$ is $0;$

{{\rm (g)} the $R$-modules admit projective covers.}
  \qed
\end{lemma}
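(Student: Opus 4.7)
The plan is to present this lemma as an anthology of classical characterizations, relying heavily on Bass \cite[Theorem P]{Ba} and Lam \cite[Theorems 23.20, 23.24]{La}. The equivalences (a) $\Leftrightarrow$ (b) $\Leftrightarrow$ (c) $\Leftrightarrow$ (f) $\Leftrightarrow$ (g) are contained in those sources, and I would simply cite them with a brief roadmap: (a) $\Leftrightarrow$ (g) is Bass's original definitional equivalence (flat modules are projective iff projective covers exist, via Lazard's representation of flat modules as directed colimits of finitely generated free modules); (b) is the internal ring-theoretic DCC characterization (for commutative rings, DCC on principal ideals coincides with DCC on finitely generated ideals, by \cite[Theorem 2]{Bj} as noted in the introduction); (c) is the commutative structure theorem, obtained by showing DCC on principal ideals forces a semilocal ring with T-nilpotent Jacobson radical $J(R)$, after which a complete set of orthogonal idempotents in the semisimple ring $R/J(R)$ lifts through the T-nilpotent radical to decompose $R$; and (f) is Bass's finitistic dimension characterization.

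The two implications that demand a short extra argument are (d) and (e). For (d), the forward direction follows from (c): the product $R \cong R_1 \times \cdots \times R_k$ is semilocal, and each localization $R_M$ at a maximal ideal is (up to isomorphism) one of the factors $R_i$, hence perfect. For the converse, I would argue that if $R$ is semilocal with maximal ideals $M_1,\ldots,M_k$ and each $R_{M_i}$ is perfect, then $J(R) = M_1 \cap \cdots \cap M_k$ is itself T-nilpotent: a sequence $a_1,a_2,\ldots$ in $J(R)$ with $a_n \cdots a_1 \neq 0$ for all $n$ would persist nonzero in some localization $R_{M_i}$, contradicting T-nilpotency of $M_i R_{M_i}$. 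Since $R/J(R)$ is a finite product of fields, idempotents lift, yielding the decomposition in (c) and hence (a).

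For (e), in a commutative semilocal ring the simple modules are precisely the $R/M_i$, so semi-artinianness is equivalent to the annihilator filtration $0 \subseteq \operatorname{Ann}_R(J) \subseteq \operatorname{Ann}_R(J^2) \subseteq \cdots$ eventually exhausting $R$; by a standard K\"onig-type argument this exhaustion is equivalent to T-nilpotency of $J(R)$, which combined with semilocality again gives (c). The main obstacle, insofar as there is one, is purely organizational: Bass's original results are stated non-commutatively, so one must verify that each equivalence collapses cleanly in the commutative setting, i.e.\ that the left/right distinction in T-nilpotency, semi-artinianness, and projective covers evaporates. In practice commutativity simplifies every step, so this presents no real difficulty; the substantive work is already done in \cite{Ba} and \cite{La}, and our task is mostly to assemble the relevant citations in the form needed for the sequel.
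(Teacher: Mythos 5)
The paper offers no proof of this lemma at all --- it is stated without argument and justified only by the preceding citation of Bass's Theorem P and Lam's Theorems 23.20 and 23.24 --- so your plan of assembling the classical references and supplying short bridges for (d) and (e) is exactly the paper's approach, and your supplementary arguments are essentially sound. The one loose point is in (e): the chain $\operatorname{Ann}_R(J) \subseteq \operatorname{Ann}_R(J^2) \subseteq \cdots$ exhausts $R$ at a finite stage only when $J$ is nilpotent (take $r=1$), so you should either continue the socle series transfinitely or, more directly, derive T-nilpotency of $J$ from semi-artinianness by the same K\"onig-type argument on sequences that you invoke.
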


   A ring $R$ is {\it subperfect} if its total quotient ring $Q(R)$ is perfect, i.e. it is an order in a perfect ring. This is {a} most essential concept in this paper.  All Cohen-Macaulay rings are subperfect.
  Subperfect rings can be characterized  as follows.

\begin{lemma} \label{H}  For a commutative ring $R$, these are equivalent: \smallskip

{\rm (i)} $R$ is  subperfect. \smallskip

{{\rm (ii)} $R$ has only finitely many minimal prime ideals, every zero-divisor in $R$ is contained in a minimal prime ideal, and the nilradical $N$ of $R$ is T-nilpotent $($i.e. for every sequence $y_1, \dots , y_n \dots$ in $N$ there is an index $m$ such that $y_1 \cdots y_m=0).$}
 \smallskip
 
{\rm (iii)} {\rm (Gupta \cite{Gu})} $R$ satisfies:

\quad {\rm (a)} the nilradical  $N$ of $R$ is T-nilpotent,

\quad {\rm (b)} $R/N$ is a $($reduced $)$ Goldie ring $($i.e. it has finite  uniform dimension and satisfies the ascending chain condition on annihilators of subsets$)$,   and 

\quad {\rm (c)} a regular coset of $N$ can be represented by a regular element of $R$. {$($Moreover, a regular coset of $N$ consists of   regular elements of $R.)$} \smallskip 

{\rm (iv)} {\rm (Fuchs--Salce \cite[Theorem 6.5]{FS})} The modules over the quotient ring $Q(R)$ are weak-injective as $R$-modules.\smallskip 

{\rm (v)}   {\rm (Fuchs--Salce \cite[Theorem 6.4]{FS})} If $M$ is an $R$-module of weak dimension $\le 1$, then $Q(R)\otimes_R M$ is a $Q(R)$-projective module.
\qed \end{lemma}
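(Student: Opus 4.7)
The plan is to treat the equivalences (i)$\Leftrightarrow$(iii), (i)$\Leftrightarrow$(iv), (i)$\Leftrightarrow$(v) as citations to the quoted works of Gupta \cite{Gu} and Fuchs--Salce \cite{FS}, and to concentrate on the equivalence (i)$\Leftrightarrow$(ii), which is the purely ideal-theoretic translation of $Q(R)$ being perfect. The workhorse is Lemma~\ref{perf}, especially parts (c) and (d): a commutative ring is perfect iff it is a finite product of local rings with T-nilpotent maximal ideals, or equivalently, is semilocal with perfect localizations.

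For (i)$\Rightarrow$(ii), I would write $Q(R)=Q_1\times\cdots\times Q_k$ with $(Q_i,M_i)$ local and $M_i$ T-nilpotent. Because a T-nilpotent ideal necessarily consists of nilpotent elements (specialize the sequence to a constant one), each $M_i$ is the unique prime of $Q_i$, so $Q(R)$ is zero-dimensional and has exactly $k$ primes, each both minimal and maximal. Contraction to $R$ gives finitely many primes: every minimal prime of $R$ consists of zero-divisors, hence survives in $Q(R)$, so it is one of the contractions, and these also account for the maximal ideals of $Q(R)$, which by standard localization are the primes that carry zero-divisors. Thus $R$ has finitely many minimal primes and every zero-divisor lies in one of them. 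T-nilpotence of the nilradical $N(R)$ then follows: a sequence $\{y_n\}\subseteq N(R)$ is also a sequence in $N(Q(R))=M_1\times\cdots\times M_k$, and componentwise T-nilpotence in each factor (taking a common upper bound of the stopping indices) yields a stage where the product vanishes in $Q(R)$, hence in $R$.

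For (ii)$\Rightarrow$(i), let $P_1,\ldots,P_k$ be the minimal primes. By hypothesis the regular elements are precisely $S=R\setminus\bigcup_i P_i$, so $Q(R)=R_S$ has prime spectrum consisting exactly of the extensions $P_iQ(R)$, which are simultaneously minimal and maximal. Each localization $Q(R)_{P_iQ(R)}=R_{P_i}$ is local and zero-dimensional, with maximal ideal equal to its nilradical $N(R_{P_i})=N(R)R_{P_i}$. A direct verification with denominators shows T-nilpotence descends to the localization: a sequence $n_i/s_i$ with $n_i\in N(R)$ satisfies $(n_1/s_1)\cdots(n_m/s_m)=(n_1\cdots n_m)/(s_1\cdots s_m)=0$ once $n_1\cdots n_m=0$ in $R$. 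So each $R_{P_i}$ is perfect by Lemma~\ref{perf}(c), and then $Q(R)$ is perfect by Lemma~\ref{perf}(d).

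The main obstacle is bookkeeping around where the ``every zero-divisor is in a minimal prime'' hypothesis is used, particularly in identifying $\{P_iQ(R)\}$ as \emph{all} the primes of $Q(R)$ rather than merely the minimal ones; once this is pinned down, the zero-dimensionality of $Q(R)$ drops out, and the perfectness criterion of Lemma~\ref{perf} closes the loop. The module-theoretic equivalences (iv) and (v) then follow by recalling that perfectness of $Q(R)$ forces $Q(R)$-modules to be weak-injective over $R$ and $Q(R)\otimes_R(-)$ to convert weak dimension $\le 1$ into projectivity over $Q(R)$, as proved in \cite[Theorems 6.4, 6.5]{FS}, and that Gupta's three-part condition (iii) repackages (ii) via the standard dictionary between finitely many minimal primes, finite uniform dimension of $R/N$, and ACC on annihilators.
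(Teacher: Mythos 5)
Your argument is correct, but it is worth noting that the paper itself supplies no proof of this lemma at all: the statement ends with a \qed, and items (iii)--(v) are attributed to Gupta and to Fuchs--Salce, with (ii) left as an unreferenced standard translation. So your proposal is strictly more than the paper does. Your route --- proving (i)$\Leftrightarrow$(ii) directly from Lemma~\ref{perf}(c),(d) by identifying $\Spec(Q(R))$ with the set of primes of $R$ consisting of zero-divisors, observing that T-nilpotence forces each factor of $Q(R)$ to be zero-dimensional, and in the converse direction checking that T-nilpotence of $N(R)$ passes to each $R_{P_i}$ --- is sound. The small points you flag do hold: every element of a minimal prime is a zero-divisor, so under (ii) the regular elements are exactly $R\setminus\bigcup_i P_i$ and the primes of $Q(R)$ are precisely the (mutually incomparable) extensions $P_iQ(R)$, giving zero-dimensionality and semilocality; and in (i)$\Rightarrow$(ii) the contractions of the finitely many primes of the zero-dimensional ring $Q(R)$ are exactly the minimal primes of $R$, whose union is the set of zero-divisors. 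Treating (iii)--(v) as citations matches what the paper does. The one thing your write-up leaves implicit that deserves a sentence is the parenthetical claim in (iii)(c) that a regular coset of $N$ consists entirely of regular elements; under (ii) this is immediate since the union of the minimal primes is both the set of zero-divisors and a union of sets closed under translation by $N\subseteq P_i$, but it is not literally part of your (i)$\Leftrightarrow$(ii) argument and should be noted if you intend to derive (iii) from (ii) rather than cite Gupta.
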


 Here an $R$-module $M$ is said to be {\it weak-injective} if $\Ext_R^1(A,M)=0$ for all $R$-modules  $A$ of weak-dimension $\le 1$ (Lee \cite{L}).

  An ideal $I$ of the commutative ring $R$ 
   is {\it subperfect} if $Q(R/I)$ is a perfect ring, i.e., $R/I$ is a subperfect ring. A regular sequence is {\it subperfect} if the ideal it generates is   subperfect.  
We use the conventions that regular sequences are proper and that the empty sequence is considered a regular sequence. Thus   the empty sequence in $R$ is subperfect if and only if  $R$ is subperfect.

  We say a ring $R$ is {\it regularly subperfect} if each regular sequence of $R$ is subperfect.  Thus a ring $R$ is regularly subperfect if and only if  for each regular sequence $x_1,\ldots,x_i$ in $R$ (including the empty regular sequence), the ring $R/(x_1,\ldots,x_i)R$ is subperfect. In particular, a necessary condition for  $R$ to be regularly subperfect is that  $R$ itself  is subperfect.
For an integer $n \ge 0$, the ring $R$ is  {\it $n$-subperfect} if $R$ is regularly subperfect and every  maximal regular sequence has length $n$.
   As a consequence,  $R$ is $0$-subperfect if and only if $R$ is perfect. This is because in a $0$-subperfect ring every non-unit is a zero-divisor, so $Q(R) = R$. 
  
    
  We summarize several important properties of $n$-subperfect rings.  

\begin{quote} {\it  For each $n \ge 0$, the class of $n$-subperfect rings  is closed under finite direct sums; also under direct summands subject to integrality conditions. Moreover, localizations at prime ideals and factor rings modulo ideals generated by regular sequences are $m$-subperfect for some $m \le n$.}
\end{quote}

The assertion regarding direct summands is formalized in Theorem~\ref{summand}, while localization of $n$-subperfect rings is the subject of Section 4. The remaining assertions are more straightforward (see e.g.~Corollary \ref{direct}).  It need not be the case that a factor ring of a $n$-subperfect ring modulo a nil ideal is $m$-subperfect for some $m \leq n$. This fails even in the noetherian case; see  \cite[Exercise 2, p.~97]{HRW}.

   The 1-subperfect rings have been studied recently under the name {\it `almost perfect rings;'} see Fuchs--Salce \cite{FS} and Fuchs \cite{F}. They were defined as \emph{subperfect rings}  such that each factor {ring} modulo a regular ideal (i.e., an ideal containing a non-zero-divisor) is a perfect ring. 
   
   \begin{lemma}  \label{almperf} Suppose $R$ is a  subperfect ring. The following are equivalent: \smallskip
   
 $(\a)$ $R$ is almost perfect;
 
 $(\b)$ every non-zero torsion $R$-module contains a simple submodule;  
 
$(\g)$ for every regular proper ideal $I$ of $R$, $R/I$ contains a simple module;

 $(\d)$ $R$ is $h$-local and   $Q(R)/R$ is semi-artinian.
 \qed

 \end{lemma}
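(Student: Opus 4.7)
The plan is to establish a cycle $(\alpha) \Rightarrow (\gamma) \Rightarrow (\beta) \Rightarrow (\alpha)$ and to equate $(\alpha)$ with $(\delta)$, throughout using Lemma \ref{perf}(e) to replace ``perfect'' by ``semilocal $+$ semi-artinian.'' The easy directions of the cycle are immediate. For $(\alpha) \Rightarrow (\gamma)$: $R/I$ is perfect and hence semi-artinian, so being nonzero it contains a simple submodule. For $(\gamma) \Rightarrow (\beta)$: given a nonzero torsion module $M$ and $0 \neq m \in M$, the annihilator of $m$ contains a regular element (since $m$ is torsion) and is proper (since $m \neq 0$), hence is a regular proper ideal; applying $(\gamma)$ to the cyclic submodule $Rm \cong R/\mathrm{Ann}(m)$ yields a simple submodule of $M$.

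The substantive implication is $(\beta) \Rightarrow (\alpha)$. For a regular proper ideal $I$, every nonzero quotient of $R/I$ is again torsion (each element is killed by a regular element of $I$) and so contains a simple submodule by $(\beta)$; hence $R/I$ is semi-artinian. The missing ingredient for perfectness is semilocality, and I plan to extract it by routing through $(\delta)$. Since $Q(R)/R$ is itself a torsion $R$-module, $(\beta)$ immediately gives $Q(R)/R$ semi-artinian; the $h$-local half of $(\delta)$ should then be deduced by combining the finiteness of minimal primes of $R$ from Lemma \ref{H} with a careful use of $(\beta)$ on the modules $R/xR$ for regular $x$. Conversely, $(\delta) \Rightarrow (\alpha)$ is cleaner: the $R$-module embedding $R/xR \hookrightarrow Q(R)/R$ sending $r + xR \mapsto x^{-1}r + R$ realizes $R/xR$ as a submodule of a semi-artinian module, hence itself semi-artinian, and the $h$-local hypothesis forces every regular element into only finitely many maximal ideals, giving semilocality of $R/xR$. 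Since any regular proper $I$ contains a regular element $x$, the quotient $R/I$ of $R/xR$ is again semilocal and semi-artinian, therefore perfect. The loop closes via $(\alpha) \Rightarrow (\delta)$: write $Q(R)/R$ as the directed union of the submodules $x^{-1}R/R \cong R/xR$ to inherit semi-artinianity, and invoke Lemma \ref{perf}(c) on each perfect $R/xR$ (finite product of local perfect rings) together with the finiteness of minimal primes to extract $h$-local.

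The hardest step I anticipate is extracting the $h$-local property from $(\beta)$---specifically the finiteness of maximal ideals above a regular element. A semi-artinian commutative ring need not be semilocal, so a socle-series argument alone cannot rule out an infinite supply of pairwise non-isomorphic simple composition factors in $R/xR$; the argument must genuinely use the subperfect hypothesis on $R$, bringing in the finite list of minimal primes and the Goldie/T-nilpotency information on $R$ and $R/N$ supplied by Lemma \ref{H} to pin down the maximal spectrum above a regular element.
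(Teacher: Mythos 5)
The paper offers no proof of this lemma at all --- it is stated with a terminal tombstone and implicitly referred back to the literature on almost perfect rings (Fuchs--Salce \cite{FS}, Fuchs \cite{F}) --- so your proposal has to stand on its own. Most of it does: $(\a)\Rightarrow(\g)\Rightarrow(\b)$ is fine, and both directions between $(\a)$ and $(\d)$ are handled correctly (the embedding $R/xR\hookrightarrow Q(R)/R$, the directed-union description of $Q(R)/R$, and Lemma~\ref{perf}(c) giving uniqueness of the maximal ideal over a regular prime all work). The genuine gap is in the implication that carries the whole content of the lemma, $(\b)\Rightarrow(\a)$: from $(\b)$ you get that $R/I$ is semi-artinian, but Lemma~\ref{perf}(e) also requires semilocality, and your plan to extract it ``by routing through $(\d)$'' with ``a careful use of $(\b)$'' is never executed. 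You are right that this cannot be waved away: a commutative semi-artinian zero-dimensional ring can have infinitely many maximal ideals (e.g.\ the ring of eventually constant sequences in $\prod_n k$), so semi-artinianity alone does not yield perfectness, and as it stands your cycle does not close.

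The missing step does not in fact need the Goldie/T-nilpotency data from Lemma~\ref{H} that you gesture at; it follows from $(\b)$ by a direct socle argument. Suppose some regular $x\in R^\times$ lies in infinitely many distinct maximal ideals $M_1,M_2,\dots$. Then $T=\bigl(\prod_i R/M_i\bigr)/\bigl(\bigoplus_i R/M_i\bigr)$ is nonzero and annihilated by $x$, hence a torsion module. If $\bar t$ generated a simple submodule $R/M$ of $T$, then $Mt\subseteq\bigoplus_i R/M_i$, so $Mt_i=0$ for all but finitely many $i$; since $t_i\neq 0$ for infinitely many $i$, this forces $M=M_i$ for infinitely many distinct $i$, which is absurd. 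Thus $T$ has zero socle, contradicting $(\b)$. Hence every regular element lies in only finitely many maximal ideals, so $R/xR$ is semilocal as well as semi-artinian, hence perfect, and any regular proper ideal $I$ exhibits $R/I$ as a quotient of such an $R/xR$. This closes $(\b)\Rightarrow(\a)$ directly (the full $h$-local statement then comes for free from your $(\a)\Rightarrow(\d)$), and with this insertion your proof is complete.
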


    Moreover, almost perfect rings (i.e. 1-subperfect rings)  have a number of interesting characteristic properties that are new even for  Cohen--Macaulay rings of Krull dimension 1. To wit, we mention the following \cite{FS}, \cite{F}:  
   A subperfect ring $R$ is almost perfect if and only if either of the following conditions is  satisfied: \smallskip
    
    {(i)} All flat $R$-modules are strongly flat (strongly flat means that it is {a summand of a module that is an extension of a free $R$-module by a direct sum of copies of}  the ring of quotients $Q$ of $R$).  
    
    {{\rm (ii)} $R$-modules of weak dimension $\le 1$ are of projective dimension $\le 1$. } 
        
    {\rm (iii)}  Every $R$-module $M$ has a divisible envelope (i.e. a divisible module containing $M$  and being contained in every divisible module that  contains $M$).

    {\rm (iv)}  {If $R$ is reduced:} each $R$-module $M$ admits a projective dimension $1$ cover (i.e. a module of projective dimension $\le 1$ along with a map $\a$ to $M$ such that any map from a module of projective dimension $\le 1$ to $M$ factors through $\a$, {and no proper summand has this property)}.  
\smallskip

For several   results in Section 3, as well in later arguments,  we work with  regular sequences  that generate ideals that are not necessarily subperfect.    We recall first some standard terminology.  
  Let $R$ be a ring (commutative), and $R^\times$ the set of regular (non-zero-divisor) elements of $R$.  
   An element $r $ of $R \setminus I$ is {\it prime to an ideal $I$} of $R$ if whenever $s \in R$ with $rs \in I$, then $s \in I$.  The set $S$ of elements prime to $I$ is  a saturated multiplicatively closed set. The prime ideals of $R$ that contain $I$ and are maximal with respect to not meeting $S$ are the {\it maximal prime divisors} of $I$. The prime ideals of $R$  that are minimal with respect to containing $I$ are the {\it minimal prime divisors} of $I$. These ideals do not meet $S$.  It follows that the classical ring of quotients $Q(R/I)$ of $R/I$ is $R_S/I_S$, and the maximal ideals of $Q(R/I)$ are the extensions to $Q(R/I)$ of the maximal prime divisors of $I$.  Similarly, the minimal prime ideals of $Q(R/I)$ are the extensions of the minimal prime divisors of $I$.  

We say an ideal $I$ of the ring $R$ is {\it unmixed} if every maximal  prime divisor  of $I$ is {also} a minimal prime divisor of $I$; equivalently, $\dim Q(R/I) = 0$. Thus,  $I$ is unmixed if and only if  every element  of $R$ not  in a minimal prime divisor of $I$ is prime to $I$.  In the case where $R$ is noetherian, this agrees with the  definition of unmixed ideal given by Bruns and Herzog in \cite[p.~{59}]{BH}. If $R$ is noetherian,  $Q(R/I)$ is semilocal. However, since  non-noetherian rings are our main focus, {in our discussions} $Q(R/I)$ need not be semilocal without additional assumptions on $I$.

 We say that an ideal $I$ of $R$ is 
 {\it finitely unmixed} if $Q(R/I)$ is a semilocal zero-dimensional ring. A regular sequence of $R$ is {\it finitely unmixed} if the ideal it generates is finitely unmixed. Thus every subperfect regular sequence is finitely unmixed, and every finitely unmixed regular sequence is unmixed.  

For unexplained terminology we refer to {Matsumara \cite{Ma} and} Bruns--Herzog \cite{BH}.


\section{Basic Properties} 

\medskip

Although the focus for most of the article is on $n$-subperfect rings, in this section we prove several assertions in greater generality. 

For an integer $n\geq 0$, say that a ring $R$ is {\it $n$-unmixed} if every regular sequence of $R$ extends to a maximal regular sequence of length $n$ that is unmixed.  Let ${\mathcal C}$ be a class of zero-dimensional rings.
 We call a ring $R$ is {\it $n$-unmixed} in ${\mathcal C}$  if
  every regular sequence  extends to a maximal regular sequence of length $n$ and for every regular sequence $x_1,\ldots,x_i$ in $R$, we have   $Q(R/(x_1,\ldots,x_i)R) \in {\mathcal C}$. Thus a ring $R$ is $n$-subperfect if and only if $R$ is $n$-unmixed in the class ${\mathcal C}$ of perfect rings.  
  
 The property of being $n$-unmixed in a class  ${\mathcal{C}}$ of zero-dimensional rings can be inductively described, as in the next lemma.    

\begin{lemma} \label{factor}  Let ${\mathcal{C}}$ be a class of zero-dimensional rings, and $n \geq 0$. 
A ring $R$ is  $n$-unmixed in  ${\mathcal{C}}$  if and only if for each $0 \leq i <n$ and {for each} regular sequence  $x_1,\ldots,x_i$  in $R$, the ring $R/(x_1,\ldots,x_i)R$ is  $(n-i)$-unmixed in  ${\mathcal{C}}$. 
\end{lemma}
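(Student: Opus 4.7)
The plan is to observe that the backward direction is essentially free: the convention that the empty sequence is a regular sequence means that the hypothesis applied with $i=0$ says exactly that $R = R/(\emptyset)R$ is $n$-unmixed in $\mathcal{C}$. So all the content is in the forward direction.

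For the forward direction, I fix a regular sequence $x_1,\ldots,x_i$ in $R$ with $i < n$, set $\bar R := R/(x_1,\ldots,x_i)R$, and verify the two defining conditions for $\bar R$ to be $(n-i)$-unmixed in $\mathcal{C}$. The basic tool, used repeatedly, is the standard correspondence: a sequence $\bar y_1,\ldots,\bar y_j$ is a regular sequence in $\bar R$ if and only if the concatenation $x_1,\ldots,x_i,y_1,\ldots,y_j$ is a regular sequence in $R$, together with the canonical isomorphism
$$\bar R/(\bar y_1,\ldots,\bar y_j)\bar R \;\cong\; R/(x_1,\ldots,x_i,y_1,\ldots,y_j)R.$$

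Given a regular sequence $\bar y_1,\ldots,\bar y_j$ in $\bar R$, I lift through the correspondence to get a length-$(i+j)$ regular sequence in $R$, which by hypothesis extends to a maximal regular sequence $x_1,\ldots,x_i,y_1,\ldots,y_j,z_1,\ldots,z_{n-i-j}$ of length exactly $n$ in $R$. Passing to $\bar R$, the residues $\bar z_1,\ldots,\bar z_{n-i-j}$ extend $\bar y_1,\ldots,\bar y_j$ to a regular sequence of length $n-i$ in $\bar R$; this extended sequence is necessarily maximal in $\bar R$, for any further extension would lift back to a proper extension in $R$, contradicting maximality there. This handles the first condition. For the quotient condition, the displayed isomorphism shows that $Q(\bar R/(\bar y_1,\ldots,\bar y_j)\bar R) = Q(R/(x_1,\ldots,x_i,y_1,\ldots,y_j)R)$, which lies in $\mathcal{C}$ by the hypothesis applied to the regular sequence $x_1,\ldots,x_i,y_1,\ldots,y_j$ in $R$ (of length $i+j \leq n$; the case $i+j = n$ simply gives the empty regular sequence in $\bar R$ and the total quotient ring of $\bar R$ itself).

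The only potential subtlety is bookkeeping: being careful that maximality transfers in both directions through the correspondence, and that the edge cases (the empty regular sequence in $\bar R$, and $j = n-i$) are handled by the same argument. There is no genuine conceptual obstacle beyond the standard fact that extending a regular sequence through a quotient by another regular sequence recovers a regular sequence in the original ring.
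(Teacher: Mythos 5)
Your proof is correct and follows essentially the same route as the paper's: the forward direction uses the same correspondence between regular sequences of $\bar R=R/(x_1,\ldots,x_i)R$ and regular sequences of $R$ beginning with $x_1,\ldots,x_i$, together with the isomorphism of the resulting quotients; your observation that the backward direction is immediate from the case $i=0$ (the empty regular sequence) is a legitimate streamlining of the paper's slightly longer converse argument. One cosmetic slip: the empty regular sequence in $\bar R$ corresponds to $j=0$ (hence $i+j=i<n$), not to $i+j=n$, but this parenthetical does not affect the argument.
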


\begin{proof}   Suppose $R$ is $n$-unmixed in  ${\mathcal{C}}$, and let $0 \leq i < n$.  Since $R$ is $n$-unmixed, every regular sequence that begins with $x_1,\ldots,x_i$ extends to a maximal regular sequence of length $n$. It follows that every maximal regular sequence in $R/(x_1,\ldots,x_i)R$ has length $n-i$. Also, since every regular sequence in $R$ is unmixed in  ${\mathcal{C}}$, so is every regular sequence in $R/(x_1,\ldots,x_i)R$. Thus, $R/(x_1,\ldots,x_i)R$ is  $(n-i)$-unmixed in  ${\mathcal{C}}$. 

Conversely, suppose that for each $0 \leq i <n$ and {for each} regular sequence  $x_1,\ldots,x_i$  in $R$, the ring $R/(x_1,\ldots,x_i)R$ is $(n-i)$-unmixed in  ${\mathcal{C}}$. Let $x_1,\ldots,x_i$ be a regular sequence in $R$,  and let $j \leq i$.  Then the zero ideal in $R/(x_1,\ldots,x_j)R$ is by assumption unmixed in  ${\mathcal{C}}$, so $Q(R/(x_1,\ldots,x_j)R) \in {\mathcal{C}}$. Moreover, since $R/(x_1,\ldots,x_i)R$ is $(n-i)$-unmixed, every maximal regular sequence in this ring has length $n-i$.  Thus every extension of $x_1,\ldots,x_i$ to a maximal regular sequence in $R$ has length $n$. This proves $R$ is $n$-unmixed {in ${\mathcal{C}}$}. 
\end{proof}

\begin{proposition} \label{factor prop}  {Assume}  $n \geq 0$. The ring $R$ is $n$-subperfect if and only if, for each regular sequence $x_1,\ldots,x_i$ in $R$, the ring $R/(x_1,\ldots,x_i)R$ is $(n-i)$-subperfect.
\end{proposition}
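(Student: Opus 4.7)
The plan is to derive the proposition as an immediate consequence of Lemma \ref{factor} specialized to the class $\mathcal{C}$ of perfect commutative rings. The paper has already observed, immediately after defining the notion of $n$-unmixed in a class, that $R$ is $n$-subperfect if and only if $R$ is $n$-unmixed in this particular $\mathcal{C}$. Thus, essentially all of the required combinatorics has already been packaged into Lemma \ref{factor}; what remains is only to cash in that identification and tidy up a boundary case and the trivial direction.

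Concretely, I would first invoke Lemma \ref{factor} with $\mathcal{C} = $ perfect rings to get: $R$ is $n$-subperfect iff for each $0 \le i < n$ and each regular sequence $x_1,\ldots,x_i$ in $R$, the ring $R/(x_1,\ldots,x_i)R$ is $(n-i)$-unmixed in $\mathcal{C}$, i.e.\ $(n-i)$-subperfect. That handles all indices $i$ with $0 \le i < n$. Next I would dispose of the boundary case $i = n$ by hand: if $R$ is $n$-subperfect and $x_1,\ldots,x_n$ is a regular sequence, then this sequence is maximal (every maximal regular sequence having length $n$), so every element of $R/(x_1,\ldots,x_n)R$ is either a unit or a zero-divisor. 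Consequently $R/(x_1,\ldots,x_n)R$ equals its own total ring of quotients, which is perfect by the definition of $n$-subperfectness, and therefore $R/(x_1,\ldots,x_n)R$ is $0$-subperfect. For $i > n$, there are no regular sequences of length $i$ in $R$, so the assertion is vacuous. For the reverse direction, applying the hypothesis with $i = 0$ (the empty regular sequence) immediately gives that $R = R/0R$ is $n$-subperfect.

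I do not expect a substantive obstacle: the proposition is essentially a translation of Lemma \ref{factor} via the definitional equivalence noted above. The only care needed is in the boundary $i = n$, which is routine once one notes that a maximal regular sequence kills all ``genuine'' regular elements in the quotient, so $R/(x_1,\ldots,x_n)R$ already equals $Q(R/(x_1,\ldots,x_n)R)$ and is thus perfect by hypothesis.
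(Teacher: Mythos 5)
Your proposal is correct and follows exactly the paper's route: the paper's entire proof is ``Apply Lemma~\ref{factor} to the class ${\mathcal{C}}$ of perfect rings.'' Your additional handling of the boundary case $i=n$ (and the vacuous case $i>n$) is a sound and slightly more careful filling-in of a detail the paper leaves implicit.
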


\begin{proof} Apply Proposition~\ref{factor}  to   the class ${\mathcal{C}}$ of perfect rings.  
\end{proof}

We record the following corollary that also shows how $n$-perfectness can be defined by  induction on $n$.

\begin{corollary} \label{reduction}   A ring $R$ is $n$-subperfect $(n \geq 1)$ if and only if it is subperfect and for each regular element $x \in R$, the ring $R/xR$ is $(n-1)$-subperfect.
\end{corollary}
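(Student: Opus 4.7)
The plan is to obtain this corollary as a direct consequence of Proposition~\ref{factor prop}, supplemented by a small definitional unwinding for the converse.

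For the forward direction, suppose $R$ is $n$-subperfect. Since $R$ is regularly subperfect, the empty regular sequence is subperfect, so $R$ itself is subperfect. For any regular non-unit $x \in R$, the singleton $(x)$ is a regular sequence of length one, and Proposition~\ref{factor prop} applied with $i = 1$ immediately gives that $R/xR$ is $(n-1)$-subperfect.

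For the reverse direction, assume $R$ is subperfect and $R/xR$ is $(n-1)$-subperfect for every regular non-unit $x \in R$. I verify the two defining conditions of $n$-subperfectness in turn. First, every regular sequence $x_1, \ldots, x_i$ of $R$ yields a subperfect quotient $R/(x_1, \ldots, x_i)R$: the case $i = 0$ is the standing hypothesis on $R$, while for $i \geq 1$ the hypothesis gives that $R/x_1R$ is $(n-1)$-subperfect, hence regularly subperfect, so the image of $x_2, \ldots, x_i$ there is a regular sequence with subperfect quotient. Second, every maximal regular sequence of $R$ has length $n$: given such a maximal sequence $x_1, \ldots, x_m$ (necessarily with $m \geq 1$ since $n \geq 1$ forces the existence of regular non-units), the image of $x_2, \ldots, x_m$ is a maximal regular sequence in the $(n-1)$-subperfect ring $R/x_1R$---any regular extension in the quotient would lift to a regular extension of $x_1, \ldots, x_m$ in $R$, violating maximality---so $m - 1 = n - 1$, and hence $m = n$.

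The main (if rather minor) obstacle will be the transfer of maximality between $R$ and $R/x_1R$, which is essentially immediate from lifting regular elements. Everything else amounts to bookkeeping with the inductive structure already supplied by Proposition~\ref{factor prop}.
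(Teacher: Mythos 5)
Your proof is correct and takes essentially the same route as the paper, which simply declares the corollary an immediate consequence of Proposition~\ref{factor prop}; your reverse direction supplies the definitional unwinding that the paper leaves implicit, and both halves of your argument are sound. One caveat: the parenthetical claim that $m \geq 1$ ``since $n \geq 1$ forces the existence of regular non-units'' is circular in the reverse direction, where the $n$-subperfectness of $R$ is the conclusion rather than a hypothesis; if $R$ has no regular non-units (e.g.\ $R$ a field), the stated hypotheses are vacuously satisfied while $R$ is only $0$-subperfect. This degenerate case is really a defect of the corollary's formulation (inherited from the circular $i=0$ instance in the converse of Proposition~\ref{factor prop}) rather than of your argument, but the honest fix is to observe that the existence of a regular non-unit must be assumed, not derived from the hypotheses.
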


\begin{proof} This is an immediate consequence of Proposition \ref{factor prop}. \end{proof}

 The property of being $n$-unmixed also has strong consequences for the dimension theory of the ring. 

\begin{proposition} \label{dim} Suppose $n \geq 0$. If the ring  $R$   is  $n$-unmixed, then $\dim R = n$ and all maximal chains of prime ideals of $R$ have the same length $n$.   
\end{proposition}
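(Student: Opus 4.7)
The plan is to establish both $\dim R \ge n$ and that every maximal chain of primes has length exactly $n$. For the lower bound, I would produce a strictly descending chain of primes out of a maximal regular sequence. Since $R$ is $n$-unmixed, the empty regular sequence extends to a maximal regular sequence $x_1,\ldots,x_n$, and by Lemma~\ref{factor} each partial ideal $I_i=(x_1,\ldots,x_i)R$ is unmixed. Beginning with a minimal prime divisor $P_n$ of $I_n$, I would descend: given $P_i$ a minimal prime divisor of $I_i$, the containment $P_i\supseteq I_{i-1}$ guarantees a minimal prime divisor $P_{i-1}$ of $I_{i-1}$ with $P_{i-1}\subseteq P_i$. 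The inclusion is strict because $x_i\in P_i$ while, by unmixedness of $I_{i-1}$ combined with the regularity of $x_i$ modulo $I_{i-1}$, $x_i$ lies outside every minimal prime divisor of $I_{i-1}$. Iterating down to a minimal prime $P_0$ of $R$ yields $P_0\subsetneq P_1\subsetneq\cdots\subsetneq P_n$, so $\dim R\ge n$.

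For the upper bound and uniform length, I would induct on $n$. The base case $n=0$ is immediate: the empty sequence is maximal regular, so $R$ has no regular non-units, every prime of $R$ consists of zero-divisors, and each such prime extends to the zero-dimensional ring $Q(R)$ (which is zero-dimensional because $(0)$ is unmixed), forcing $\dim R=0$. For $n\ge 1$ and a maximal chain $P_0\subsetneq\cdots\subsetneq P_k$, the strategy is to build a regular sequence $y_1,\ldots,y_k$ with $y_i\in P_i$ and each $y_i$ regular modulo $(y_1,\ldots,y_{i-1})R$. By unmixedness of the ideal $(y_1,\ldots,y_{i-1})R$, regularity modulo it is equivalent to avoiding its minimal prime divisors, and $P_i$ itself is not among those divisors because $P_{i-1}$ is a strictly smaller prime containing the ideal. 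A prime-avoidance argument then supplies $y_i$. Once produced, this regular sequence of length $k$ extends to a maximal regular sequence of length $n$, giving $k\le n$. Modding out by $y_1$ and applying Lemma~\ref{factor} to the $(n-1)$-unmixed ring $R/(y_1)R$, the induced chain $P_1/(y_1)R\subsetneq\cdots\subsetneq P_k/(y_1)R$ is maximal of length $k-1$ (arranging the choice of $y_1$ so that $P_1$ is minimal over $(y_1)$), so the inductive hypothesis yields $k-1=n-1$, hence $k=n$.

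The main obstacle is the prime-avoidance step in the inductive construction: although $P_i$ is provably not contained in any single minimal prime divisor of $(y_1,\ldots,y_{i-1})R$, one needs the stronger statement that $P_i$ is not contained in their union, and in the non-noetherian setting there is no automatic guarantee of finiteness for this set of divisors. The delicate point is to harness the unmixedness of $(y_1,\ldots,y_{i-1})R$ together with the interplay between the given chain and the regular sequence being built, so that the choice of each $y_i$ can be made inside $P_i$ without being trapped by minimal prime divisors, and simultaneously so that $P_i$ becomes minimal over $(y_1,\ldots,y_i)R$ for the reduction to $R/(y_1)R$ via Lemma~\ref{factor} to proceed cleanly.
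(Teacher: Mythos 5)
Your first half is fine: the descending chain $P_0\subsetneq\cdots\subsetneq P_n$ built from the maximal regular sequence correctly gives $\dim R\ge n$, and in fact you do not even need unmixedness there, since a minimal prime divisor of $I_{i-1}$ never meets the set of elements prime to $I_{i-1}$, so $x_i$ automatically avoids all of them. (The paper simply cites Kaplansky for this inequality.) The problem is in the second half, and the prime-avoidance step you flag is a genuine gap rather than a technicality. Being $n$-unmixed only says that $\dim Q(R/(y_1,\ldots,y_{i-1})R)=0$; the paper is careful to distinguish this from being \emph{finitely} unmixed, where $Q(R/I)$ is moreover semilocal. So the collection of minimal prime divisors that $y_i$ must avoid may be infinite, and knowing that $P_i$ is contained in no single one of them does not yield an element of $P_i$ outside their union. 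Your plan of threading a regular sequence of length $k$ through the whole chain therefore does not go through as written, and the further parenthetical ``arranging the choice of $y_1$ so that $P_1$ is minimal over $(y_1)$'' is a second unproved assertion you would need to supply.

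The way out --- and the route the paper actually takes --- is to replace prime avoidance by localization. If an ideal $J$ of $R$ consists of zero-divisors modulo an unmixed ideal $I$, then $J$ extends to a proper ideal of the zero-dimensional ring $Q(R/I)$ and is therefore contained in a maximal, hence minimal, prime divisor of $I$; contrapositively, any ideal not contained in a minimal prime divisor of $I$ contains an element prime to $I$. Applied to $I=(0)$, this shows every non-minimal prime of $R$ is a regular ideal, with no finiteness hypothesis at all. With this in hand the paper needs only \emph{one} regular element, not a regular sequence along the chain: it chooses a regular $x\in P_1$ (or $x\in P_1\cap Q_1$ when comparing two chains, using that a product of regular elements is regular), passes to the $(n-1)$-unmixed ring $R/xR$ via Lemma~\ref{factor}, and applies the induction hypothesis to the image chains; the bound $k\le n$ then falls out of the induction rather than from extending a length-$k$ regular sequence. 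If you want to salvage your construction, the same localization observation, applied in $R/(y_1,\ldots,y_{i-1})R$ to the image of $P_i$ (which properly contains the image of $P_{i-1}$ and so is not contained in any minimal prime divisor), is exactly what supplies $y_i$ --- but the single-element reduction is both shorter and avoids the issue entirely.
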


\begin{proof} 
We first prove by induction on $n$  that $\dim R = n$. 
If $n = 0$, then the empty regular sequence is unmixed, and so  $\dim Q(R) = 0$. {In this case regular elements are units,  therefore} we have $R = Q(R)$.  Thus, for $n = 0$,  $\dim R = 0$ and the claim is clear.

Suppose that $n>0$ and for each $0 \leq i < n$, every  $i$-unmixed ring has dimension $i$.  We claim that $\dim R = n$. Since $R$ is $n$-unmixed with $n>0$, we have $\dim R >0$.  Suppose that $P_0 \subset P_1 \subset \cdots \subset P_{m}$  is a chain of distinct prime ideals of $R$ with $m>0$. 
Since $R$ is $n$-unmixed with $n>0$, we have $R \ne Q(R)$  and $\dim Q(R)  =0$. Hence every ideal of $R$ not contained in a minimal prime ideal is regular, so there is a regular $x \in P_1$. By Lemma~\ref{factor},   $R/xR$ is $(n-1)$-unmixed.  
 By the induction hypothesis,  $\dim R/xR =  n-1$. Since $P_1/xR \subset \cdots \subset  P_{m}/xR$ is a chain of distinct prime ideals  of $R/xR$ and $\dim R/xR  = n-1$, we conclude that $m \leq  n$.  Thus  
  no chain of distinct prime ideals of $R$ has length {exceeding $n$, that is, $\dim R \leq n$.}  To see that $n \leq \dim R$, use the fact that  $R$ has a regular sequence of length $n$  \cite[Theorem 132]{K}.  Therefore, $\dim R = n$.

 Next we show that all maximal chains of prime ideals have the same length. The proof is again by induction on $n$. If $n = 0$, then, as we have established,  $\dim R = 0$. In this case the proposition is clear. Let $n>0$, and  {suppose the claim holds for all $i < n$.}    Let $P_0 \subset P_1 \subset \cdots \subset  P_k$ and $Q_0 \subset Q_1 \subset \cdots \subset Q_m$ be  maximal chains of distinct prime ideals in $R$.  We claim $k = m$. Since  the zero ideal of $R$ is unmixed, every non-minimal prime ideal of $R$ is regular. Thus $P_1$ and $Q_1$ are regular ideals of $R$,  so there is an  $x \in R^\times$  in $P_1 \cap Q_1$.  By Lemma~\ref{factor}, $R/xR$ is an $(n-1)$-unmixed ring  with maximal chains of prime ideals $P_1/xR  \subset \cdots \subset  P_k/xR$ and $Q_1/xR \subset \cdots \subset Q_m/xR$.  
By the induction hypothesis on $R/xR$,  we have $k-1 = m-1$, thus $k = m$.   This means that all chains of maximal length in $R$ have the same length $k$. It follows that $\dim R = k$, thus $k = \dim R = n$.   
\end{proof}

\begin{corollary} \label{Krull}  For every $n \ge 0$, an $n$-subperfect ring is catenary,   equidimensional, and has  Krull dimension $n$. \qed \end{corollary}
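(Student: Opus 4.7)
The plan is to reduce the corollary to Proposition \ref{dim} applied to the class of perfect rings, and then to extract catenarity and equidimensionality from the fact that \emph{all maximal chains of primes in $R$ have equal length $n$}.

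First I would observe that perfect rings are zero-dimensional (they are finite products of local rings with T-nilpotent maximal ideal, by Lemma \ref{perf}(c)), so an $n$-subperfect ring $R$ is in particular $n$-unmixed in the sense of Proposition \ref{dim}: every regular sequence extends to a maximal regular sequence of length $n$, and every such sequence generates an unmixed ideal (since its associated $Q(R/I)$ is a zero-dimensional ring). Applying Proposition \ref{dim} then gives $\dim R = n$ and the statement that every maximal chain of prime ideals of $R$ has length exactly $n$. This already settles the Krull dimension assertion.

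Next I would derive equidimensionality. For any minimal prime $\mathfrak{p}$ of $R$, pick a saturated chain from $\mathfrak{p}$ up to some maximal ideal $M$; being saturated and having minimal bottom and maximal top, it is a maximal chain of $\Spec R$, hence has length $n$. Therefore $\dim R/\mathfrak{p} = n$ for every minimal prime $\mathfrak{p}$ (and similarly $\hgt M = n$ for every maximal ideal $M$), which is what equidimensional means.

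For catenarity, given comparable primes $P \subsetneq Q$ of $R$, I would fix once and for all a saturated chain from some minimal prime $\mathfrak{p}_0$ up to $P$ of length $b$, and a saturated chain from $Q$ up to some maximal ideal $M_0$ of length $a$. Given any saturated chain $P = P_0 \subsetneq P_1 \subsetneq \cdots \subsetneq P_k = Q$, concatenation produces a maximal chain of $\Spec R$ of length $b + k + a$. By Proposition \ref{dim} this equals $n$, so $k = n - a - b$ is an invariant depending only on $P$ and $Q$. Hence any two saturated chains between $P$ and $Q$ have the same length, proving $R$ is catenary.

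The only nontrivial step is the catenarity argument, and even there the main point is just the standard observation that a saturated chain starting at a minimal prime and ending at a maximal ideal is maximal in $\Spec R$; once Proposition \ref{dim} is in hand, no further obstacle remains.
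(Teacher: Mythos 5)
Your proof is correct and follows the paper's intended route: the corollary is stated with a \qed precisely because it is meant to be an immediate consequence of Proposition~\ref{dim}, and your reduction (perfect rings are zero-dimensional, so an $n$-subperfect ring is $n$-unmixed, hence all maximal chains of primes have length $n$) together with the standard extraction of dimension, equidimensionality, and catenarity from the equal-length property is exactly the filling-in the authors leave to the reader. No gaps.
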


For an ideal $I$ of a ring $R$, the {\it $I$-depth} of $R$ 
 is the smallest positive integer $t$ such that $\Ext_R^t(R/I,R) \ne 0$.  If $R$ is noetherian, then the $I$-depth of $R$ is the length of the longest regular sequence contained in $I$.   
 Thus a noetherian ring $R$ is Cohen--Macaulay if and only if for each proper ideal $I$ of $R$, the $I$-depth of $R$ is equal to the height of $I$.  
  We show in Theorem~\ref{ideal} that this result holds more generally for regularly subperfect rings.



\begin{theorem} \label{ideal}
Let $R$ be a regularly subperfect ring,  $I$ a proper ideal of $R$, and let $n \geq 0$.  The following are equivalent: 
 \begin{enumerate}

\item $I$ has height $n$. 

\item  Every maximal regular sequence in $I$ has length $n$.  

\item There exists a maximal regular sequence in $I$ of length $n$. 

\item $n = \min \{t:\Ext^t_R(R/I,R) \ne 0\}.$
\end{enumerate}
\end{theorem}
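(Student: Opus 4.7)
My plan is to establish $(3)\Leftrightarrow(4)$ first via a Rees-style homological reduction, to read off $(2)\Leftrightarrow(3)$ from the resulting Ext characterization of grade, and finally to attack $(1)\Leftrightarrow(3)$ using the dimension theory of Proposition~\ref{dim} and Corollary~\ref{Krull}. The leverage supplied by the regularly subperfect hypothesis throughout is that each factor $S:=R/(x_1,\ldots,x_i)R$ by a regular sequence is subperfect, so by Lemma~\ref{H}(ii) the zero-divisors of $S$ form the union of finitely many minimal primes, and by Lemma~\ref{perf} each local factor of the perfect total quotient ring $Q(S)$ has nonzero socle.

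For $(3)\Leftrightarrow(4)$, I would start with a regular sequence $x_1,\ldots,x_i$ in $I$ and apply $\Hom_R(R/I,-)$ to $0\to R\xrightarrow{x_1}R\to R/x_1R\to 0$. Since $x_1\in I$ annihilates $R/I$, multiplication by $x_1$ is zero on every $\Ext^t_R(R/I,-)$, so the long exact sequence collapses into short exact sequences
\[ 0\to\Ext^{t-1}_R(R/I,R)\to\Ext^{t-1}_R(R/I,R/x_1R)\to\Ext^t_R(R/I,R)\to 0.\]
Combined with the base-change isomorphism $\Ext^{*}_R(R/I,R/x_1R)\cong\Ext^{*}_{R/x_1R}(R/I,R/x_1R)$ (valid because $I\supseteq x_1R$) and iterated along $x_1,\ldots,x_i$, this yields $\Ext^t_R(R/I,R)=0$ for $t<i$ together with $\Ext^i_R(R/I,R)\cong\Hom_R(R/I,S)$. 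The theorem then reduces to a Hom-versus-maximality dictionary: $\Hom_R(R/I,S)\ne 0$ iff $\mathrm{ann}_S(I/(x_1,\ldots,x_i))\ne 0$, iff every element of $I$ is a zero-divisor in $S$, iff the sequence is maximal in $I$. The nontrivial direction uses subperfectness of $S$: prime avoidance places $I/(x_1,\ldots,x_i)$ inside a single minimal prime $P$ of $S$, and pulling back a nonzero element of $\mathrm{ann}_{Q(S)}(PQ(S))$ (which exists because the relevant local factor of the perfect ring $Q(S)$ has nonzero socle) through the injection $S\hookrightarrow Q(S)$ produces a nonzero element of $\mathrm{ann}_S(P)\subseteq\mathrm{ann}_S(I/(x_1,\ldots,x_i))$. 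This gives $(3)\Leftrightarrow(4)$, and since (4) determines $n$ uniquely as the Ext grade of $I$, $(2)\Leftrightarrow(3)$ follows immediately.

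For $(1)\Leftrightarrow(3)$, let $g$ denote the common length of maximal regular sequences in $I$ and fix one such sequence $x_1,\ldots,x_g$. The inequality $\mathrm{ht}(I)\ge g$ I would get by induction on $g$ using Proposition~\ref{factor prop}: $R/x_1R$ is regularly subperfect with $x_2,\ldots,x_g$ maximal in $I/x_1R$, so $\mathrm{ht}(I/x_1R)\ge g-1$ by induction; any chain of primes in $R/x_1R$ below $I/x_1R$ lifts to $R$ and extends downward by a minimal prime of $R$ not containing the regular element $x_1$, giving $\mathrm{ht}(I)\ge\mathrm{ht}(I/x_1R)+1\ge g$. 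For the reverse inequality, I locate a minimal prime divisor $P$ of $(x_1,\ldots,x_g)R$ with $I\subseteq P$ (again by prime avoidance inside the subperfect quotient $S$) and appeal to Corollary~\ref{Krull}: the equidimensional-catenary conclusion applied in tandem to $R$ and $S$ forces $\dim R/P=\dim R-g$, whence $\mathrm{ht}(P)=g$ and thus $\mathrm{ht}(I)\le g$.

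The principal obstacle is the upper bound $\mathrm{ht}(I)\le g$: the non-noetherian setting precludes a direct appeal to Krull's generalized principal ideal theorem, so the height control of a minimal prime divisor of an ideal generated by a regular sequence rests essentially on the equidimensional-catenary structure of Corollary~\ref{Krull} applied both to the base ring and to its quotient by the maximal regular sequence, together with the dimension drop $\dim S=\dim R-g$ supplied by iterated application of Proposition~\ref{factor prop}. A secondary subtlety is the nonvanishing of $\Hom_R(R/I,S)$ in the maximal case, where the socle-of-a-perfect-ring argument is what converts ``$I$ is contained in a minimal prime of $S$'' into ``$I$ annihilates a nonzero element of $S$.''
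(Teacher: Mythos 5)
Your treatment of $(3)\Leftrightarrow(4)$ is essentially the paper's: the authors also reduce to the Rees isomorphism $\Ext^n_R(R/I,R)\cong\Hom_R(R/I,R/J)$ (they cite Kaplansky rather than re-deriving it) and then produce a nonzero element of $\Hom_R(R/I,R/J)$ by locating $I/J$ inside a minimal prime of the subperfect ring $R/J$ and using the socle of the corresponding local factor of the perfect ring $Q(R/J)$. (One caveat: your asserted base-change isomorphism $\Ext^{*}_R(R/I,R/x_1R)\cong\Ext^{*}_{R/x_1R}(R/I,R/x_1R)$ is false in all degrees in general --- compare $\Ext^1_{\mathbb Z}(\mathbb Z/p,\mathbb Z/p)$ with $\Ext^1_{\mathbb Z/p}(\mathbb Z/p,\mathbb Z/p)$ --- so the iteration should be run via Rees's change-of-rings theorem or entirely over $R$; the two facts you extract are nevertheless correct.) Deriving $(2)\Leftrightarrow(3)$ from the Ext-grade is a legitimate alternative to the paper, which instead gets it as a by-product of the height equivalence.

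The genuine gap is in your upper bound $\mathrm{ht}(I)\le g$. You invoke Corollary~\ref{Krull} for $R$ and for $S=R/(x_1,\ldots,x_g)R$, but that corollary applies only to \emph{$n$-subperfect} rings, and Theorem~\ref{ideal} assumes only that $R$ is \emph{regularly subperfect}. A regularly subperfect ring need not be equidimensional, catenary in the relevant sense, or even finite-dimensional: for instance $k[x]\times k[y,z]$ is Cohen--Macaulay, hence regularly subperfect by Corollary~\ref{CM char}, but not equidimensional, and Nagata's infinite-dimensional noetherian regular ring is regularly subperfect with $\dim R=\infty$. So the quantity $\dim R-g$ is not available, and even when it is, $\dim R/P=\dim R-g$ does not force $\mathrm{ht}(P)=g$ without knowing that all maximal chains of primes in $R$ have the same length --- which is exactly the conclusion of Proposition~\ref{dim} for $n$-unmixed rings, not for regularly subperfect ones. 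The paper sidesteps this entirely: rather than bounding the height of a minimal prime divisor of $(x_1,\ldots,x_g)R$, it shows that any regular sequence $x_1,\ldots,x_t$ in $I$ with $t<\mathrm{ht}(I)$ can be \emph{extended inside $I$} (since $Q(R/(x_1,\ldots,x_t)R)$ is semilocal and zero-dimensional, $(x_1,\ldots,x_t)R$ has finitely many minimal prime divisors, and prime avoidance produces an element of $I$ prime to $(x_1,\ldots,x_t)R$); combined with the easy inequality that regular-sequence length is at most height, this gives $(1)\Leftrightarrow(2)\Leftrightarrow(3)$ with no appeal to equidimensionality or to $\dim R$. You should replace your catenarity argument with this extension argument.
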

 
 \begin{proof} 
 We first prove the equivalence of (1), (2) and (3).  
 Since the length of a regular sequence in $I$ is at most the height of $I$,  
it  suffices to show that if $x_1,\ldots,x_t$ is a regular sequence in $I$ such that $t < $ ht$(I)$, then $x_1,\ldots,x_t$ extends to a regular sequence in $I$ of length $t+1$.   Assume $x_1,\ldots,x_t$ is such a regular sequence. Since $Q(R/(x_1,\ldots,x_t)R)$ is semi-local and zero-dimensional, there are only finitely many minimal prime ideals $P_1,\ldots,P_m$ of $(x_1,\ldots,x_t)R$, and each element of $R$ not prime to $(x_1,\ldots,x_t)R$ is in one of the $P_j$.  As   
$I/(x_1,\ldots,x_t)R$ has positive height, $I \not \subseteq P_j$ for any $j$, so  $I \not \subseteq P_1 \cup \cdots \cup P_m$ by prime avoidance.  Consequently,   there exists  $x_{t+1} \in I$ prime to $(x_1,\ldots,x_t)R$.  Thus 
 $x_1,\ldots,x_t,x_{t+1}$ is a regular sequence, and the equivalence of (1), (2) and (3) follows. 
 
 To see that (4) implies (3), suppose $\Ext^n_R(R/I,R) \ne 0$, and $x_1,\ldots,x_n$ is a regular sequence {in $I$}. Let $J = (x_1,\ldots,x_n)R$.  By \cite[p.~101]{K}, 
 $$\Ext^n_R(R/I,R) \cong \Hom_R(R/I,R/J).$$
 From this isomorphism we conclude that there is $y \in R \setminus J$ such that $yI \subseteq J$, and hence the image of  $I$ in $R/J$   consists of zero-divisors. Thus $x_1,\ldots,x_n$ is a maximal  regular sequence  in $I$.  
 
Finally, to see that (3) implies (4),  suppose $x_1,\ldots,x_n$ is a maximal regular sequence in $I$. (Since $I$ has finite height, such a regular sequence must exist.) Then the image  of $I$  in $R/J$ consists of zero-divisors.  
Since $x_1,\ldots,x_n$ is  subperfect, the ring $Q(R/J)$ is perfect. Let $P$ be a prime ideal of $R$  containing $I$  such that $P/J$ extends to a maximal ideal of  $Q(R/J)$.  
A routine application of Lemma~\ref{perf}(c) and (e) {guarantees the existence of} 
%
%
 $x \in R \setminus J$ such that $xP \subseteq J$. Thus $xI \subseteq J$. Define a homomorphism $f:R/I \rightarrow R/J$ by $f(r+I) = rx+J$ for all $r \in R$. Then $f \ne 0$, and so by the above isomorphism  $\Ext^n_R(R/I,R) \ne 0$. If  {$t \leq n$ satisfies} $\Ext^t_R(R/I/R) \ne 0$, then since (3) implies (4), we have $x_1,\ldots,x_t$ is a maximal regular sequence in $I$. By the equivalence of (2) and (3), this yields $t = n$.  
 \end{proof} 
 
 \begin{remark}
{From the proof of Theorem~\ref{ideal} it is evident} that statements (1), (2) and (3)  remain equivalent if rather than assuming $R$ is regularly subperfect we assume only that every regular sequence is finitely unmixed.  
\end{remark}

 \begin{corollary} \label{Krull2} Let $n \geq 0$.  A ring $R$ is $n$-subperfect if and only if  $R$ is regularly subperfect and each maximal ideal of $R$ has height $n$. 
\end{corollary}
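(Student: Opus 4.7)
The plan is to reduce both directions to Theorem~\ref{ideal}, which under the assumption of regular subperfectness identifies the length of a maximal regular sequence in a proper ideal with the height of that ideal.

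For the forward direction, I would assume $R$ is $n$-subperfect. By definition $R$ is regularly subperfect, so I only need to establish the height statement. Since $R$ is $n$-unmixed in the class of perfect rings, Proposition~\ref{dim} (equivalently Corollary~\ref{Krull}) tells me that every maximal chain of prime ideals of $R$ has length exactly $n$. Each maximal ideal $M$ sits at the top of some such chain (extend a minimal prime contained in $M$ up to $M$ and then to a maximal chain), so $\mathrm{ht}(M) = n$.

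For the reverse direction, assume $R$ is regularly subperfect and every maximal ideal of $R$ has height $n$. Let $x_1,\ldots,x_t$ be a maximal regular sequence in $R$. Since regular sequences are proper by convention, $(x_1,\ldots,x_t)R$ is a proper ideal of $R$ and hence is contained in some maximal ideal $M$, which has height $n$ by hypothesis. The key (and only subtle) point is that $x_1,\ldots,x_t$ is also a maximal regular sequence when viewed as sitting inside $M$: any $x_{t+1} \in M$ extending it would in particular extend it as a regular sequence in $R$, contradicting maximality there. Applying Theorem~\ref{ideal} to the proper ideal $M$ then yields $t = \mathrm{ht}(M) = n$, so every maximal regular sequence in $R$ has length $n$ and $R$ is $n$-subperfect.

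The argument is essentially a clean pairing of Proposition~\ref{dim} (dimension control for regular-sequence-unmixed rings) with Theorem~\ref{ideal} (height equals depth under regular subperfectness). I do not anticipate any real obstacle; the one thing to be careful about is the transfer of maximality from a regular sequence in $R$ to one in an ambient maximal ideal $M$, but this follows at once from the definition of maximality.
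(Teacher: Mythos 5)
Your argument is correct and follows the same route as the paper: the forward direction is exactly Corollary~\ref{Krull} (via Proposition~\ref{dim}), and the reverse direction applies the equivalence of (1) and (2) in Theorem~\ref{ideal} to a maximal ideal containing a given maximal regular sequence. The one point you flag --- that a maximal regular sequence of $R$ lying in $M$ is also maximal as a regular sequence in $M$ --- is handled correctly and is all that is needed.
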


\begin{proof} If $R$ is $n$-subperfect, then each maximal ideal of $R$ has height $n$ by Corollary~\ref{Krull}. Conversely, if  $R$ is regularly subperfect and each maximal ideal has height $n$, then  
 every maximal regular sequence in $R$ has length $n$ by Theorem~\ref{ideal}. 
 \end{proof}

To verify that 
    a local  noetherian ring $R$ of dimension $d$   is Cohen--Macaulay, it is enough to exhibit just one regular sequence of length $d$.  
     By contrast, the following example shows that in a local domain $R$ of dimension $d$, the existence of a subperfect regular sequence of length $d$ is not sufficient to guarantee that the domain is $d$-subperfect.  
    
    \begin{example} \label{Kabele}
    In \cite[Example 5]{Kab}, Kabele constructs a local domain $R$ having the ring $S=k[[x,y,z]]$ as an integral extension, where $k$ is a field of characteristic $2$ with $[k:k^2] = \infty$ and $x,y,z$ are indeterminates for $k$.  The ring $R$ has the property that  $x,y$ is  not a regular sequence in $R$, but  $zR, (z,x)R$ and $(z,x,y)R$ are {distinct} prime ideals of $R$, and hence $z,x,y$ is a subperfect regular sequence in $R$. 
    Moreover, $\dim R = 3$ as  $S$ has dimension 3 and is integral over $R$. 
    Since $x,y$ is not a regular sequence and $x$ is a non-zero-divisor in $R$,  the image of $y$ in $R/xR$ is a zero-divisor. If $R$ is $3$-subperfect, then $R/xR$ is subperfect, so $y$ is in a minimal prime ideal $P$ of $xR$.  
   In this case, Corollary~\ref{Krull} implies that $\dim R/P = 2$.   Let $P'$ be a prime ideal of $S$ lying over $P$.   $S$ is integral over $R$, so $\dim S/P' =\dim R/P =2$ \cite[Theorem~47, p.~31]{K}.  Since $S$ is a catenary domain, this implies ht$(P') = 1$. However, $(x,y)S$ is a height $2$ prime ideal of $S$ contained in $P'$, a contradiction. 
    Therefore, $R$ is not $3$-subperfect  despite the fact that $R$ has a length 3 maximal regular sequence that is subperfect.  
    \end{example}
    

\section{Localization and globalization} 

In this section we consider localization and globalization of the $n$-subperfect property. 
In general, issues of localization involving regular sequences are complicated by the fact that a regular sequence in a localization at a prime ideal need not be the image of a regular sequence in $R$.  However, as we observe in the next lemma, this problem  can be circumvented for regularly subperfect rings. 

\begin{lemma}  \label{replace}
Let $R$ be a regularly subperfect ring, $P$ a prime ideal of $R$, and let $x_1,\ldots,x_n$ be a regular sequence in $R_P$. Then there is a regular sequence $y_1,\ldots,y_n \in P$ such that  $(x_1,\ldots,x_i)R_P = (y_1,\ldots,y_i)R_P$ for each $i =1,\ldots,n$.  
\end{lemma}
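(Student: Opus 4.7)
The plan is to proceed by induction on $n$. The base case $n=0$ is vacuous, and the inductive step is routine: given the base case, apply it to $x_1$ to obtain $y_1 \in P$ regular in $R$ with $y_1 R_P = x_1 R_P$; then $R/y_1 R$ is again regularly subperfect (since any regular sequence there lifts to a regular sequence in $R$ beginning with $y_1$), the image $\bar P$ of $P$ is prime with $(R/y_1 R)_{\bar P} \cong R_P/y_1 R_P$, and applying the induction hypothesis to $\bar x_2, \ldots, \bar x_n$ in the quotient yields the remaining $\bar y_i$'s, which lift back to $P$. So the crux is the case $n=1$.

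For $n=1$, write $x_1 = r/s$ with $r \in R$ and $s \in R \setminus P$; then $r \in P$ and $r/1$ is regular in $R_P$. My strategy is to find $y_1 = r + z$ with $z \in K_P := \ker(R \to R_P)$, so that $y_1 R_P = r R_P = x_1 R_P$ and $y_1 \in P$ come for free (using $K_P \subseteq P$). What remains is that $y_1$ must be regular in $R$, i.e., $r + z$ avoids each of the finitely many minimal primes $P_1, \ldots, P_m$ of $R$ (finite by Lemma~\ref{H}). Partitioning the $P_j$'s according to whether $P_j \subseteq P$ and whether $r \in P_j$, prime avoidance reduces the task to the following sub-claim: \emph{for every minimal prime $P_j$ with $P_j \not\subseteq P$, one has $K_P \not\subseteq P_j$.}

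To prove the sub-claim, I will use the decomposition $Q(R) = A_1 \times \cdots \times A_m$ into local perfect factors supplied by Lemma~\ref{H}, with $A_j$ corresponding to $P_j$. Let $e \in Q(R)$ be the idempotent projecting onto those factors with $P_j \subseteq P$, and write $e = a/b$ with $a \in R$ and $b \in R^\times$; since $P$ is a proper prime, $R^\times \not\subseteq P$ (else $1 \in R = P \cup \bigcup_j P_j$ would violate primality), and I aim to choose $b \notin P$. Then $b/1$ is a unit in $R_P$ and $a/b$ becomes an idempotent of the local ring $R_P$, hence equal to $0$ or $1$. The case $a/b = 0$ would force $a \in K_P$; picking $c \in R \setminus P$ with $ac = 0$ and reading components in $Q(R)$, one sees that $c$ has zero image in each $A_j$ with $P_j \subseteq P$ (as $(a)_j$ is a unit there), whence $c \in \bigcap_{P_j \subseteq P} P_j \subseteq P$, a contradiction. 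Thus $a/b = 1$ in $R_P$, so $a - b \in K_P$; computing components shows $a - b$ vanishes on the factors with $P_j \subseteq P$ and is a unit on the factors with $P_j \not\subseteq P$, so $a - b \notin P_j$ whenever $P_j \not\subseteq P$, establishing the sub-claim.

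The main obstacle is arranging $b \notin P$ in the representation of $e$, equivalently showing that the denominator ideal $D := \{b \in R : be \in R\}$ is not contained in $P$. The ideal $D$ is regular by construction (it contains some $b_0 \in R^\times$), and since $R^\times \not\subseteq P$ one expects $D$ cannot be swallowed by the proper prime $P$; however, scaling $b_0$ by elements of $R^\times$ does not suffice when $b_0 \in P$, so the careful verification must exploit the subperfect structure of $R$ more deeply, and this is the delicate step of the base case.
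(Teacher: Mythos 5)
Your overall architecture is sound: inducting on $n$, reducing to the case $n=1$, perturbing a numerator $r$ of $x_1$ by an element of $K_P=\ker(R\to R_P)$, and reducing via Kaplansky-style prime avoidance to the sub-claim that $K_P\not\subseteq P_j$ whenever the minimal prime $P_j\not\subseteq P$. You have correctly isolated the one genuinely delicate point --- but you have not closed it. Your proof of the sub-claim requires writing the idempotent $e\in Q(R)$ as $a/b$ with $b$ regular \emph{and} $b\notin P$, and you concede in your final paragraph that you cannot arrange this. As written, the base case $n=1$ is therefore incomplete, and the entire induction rests on it.

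The gap is real but fillable, and the missing ingredient is one the paper uses elsewhere (in the proof of Theorem~\ref{localization}): by Lipman's result, $Q(R_P)\cong Q(R)_{R\setminus P}$. Writing $Q(R)=A_1\times\cdots\times A_m$ with each $A_j$ local and zero-dimensional, every factor $A_j$ with $P_j\not\subseteq P$ dies in this localization: any $s\in P_j\setminus P$ has image in the maximal ideal of $A_j$, which is nil, so $s$ becomes simultaneously a unit and nilpotent in $(A_j)_{R\setminus P}$, forcing $(A_j)_{R\setminus P}=0$. Hence $1-e$ maps to $0$ in $Q(R)_{R\setminus P}=Q(R_P)$. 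Now write $1-e=a'/b'$ with $b'\in R^\times$ \emph{arbitrary} (no condition relative to $P$ needed): then $a'=b'(1-e)$ maps to $0$ in $Q(R_P)$, hence to $0$ in $R_P$, i.e.\ $a'\in K_P$; and its component in each $A_j$ with $P_j\not\subseteq P$ is the unit $b'_j$, so $a'\notin P_j$. This proves your sub-claim directly and makes the ``arrange $b\notin P$'' step unnecessary. (If you prefer your formulation, the same computation produces $t\in R\setminus P$ with $te=t\in R$, so the denominator ideal $D=(R:_Re)$ contains $t\notin P$ as well as a regular element, whence $D\not\subseteq P\cup P_1\cup\cdots\cup P_m$ by prime avoidance.) For comparison, the paper takes a more direct route: it contracts $x_1R_P$ and $x_1PR_P$ to ideals $J\subsetneq I$ of $R$, chooses $y_1\in I$ avoiding $J$ and all minimal primes by prime avoidance, and recovers $y_1R_P=x_1R_P$ by Nakayama; the step you struggled with corresponds to the paper's own (rather terse) assertion that $I\not\subseteq P_j$ for every $j$, which for $P_j\not\subseteq P$ can be justified by exactly the localization fact above, since $K_P\subseteq I$.
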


\begin{proof}  Let $I$ and $J$ be the ideals of $R$ defined by 
      \begin{center} $I = \{r \in R:(\exists s \in R \setminus P) \: rs \in x_1R\}$ \
       and \  $J = \{r \in R:(\exists s \in R \setminus P) \: rs \in x_1P\}$.  \end{center} 
       Then $IR_P = x_1R_P$ and $JR_P = IPR_P$. Moreover, $J \subset I$ {is a proper inclusion}, since the image of $x_1$ in $R_P$  is a non-zero-divisor. $Q(R)$ is zero-dimensional and semilocal, {so $R$ has finitely many minimal prime ideals $P_1,\ldots,P_m$} such that the set of  zero-divisors in  $R$ is $P_1 \cup \cdots \cup P_m$.    Since the image of $x_1$ in $R_P$  is a non-zero-divisor,  $I \not \subseteq P_j$ for any $j$. By prime avoidance, there is $y_1 \in I$ such that $y_1 \not \in J  \cup   P_1 \cup \cdots \cup P_m$.  Since $IR_P$ is a principal ideal and  the image of $y_1$ in $R_P$ is not in  $JR_P$, Nakayama's Lemma implies $x_1R_P =  IR_P = y_1R_P$.  By the choice of $y_1$, {we have $y_1  \in R^\times$}.   
       
       Now suppose $1 < t \leq n$ and there is a regular sequence $y_1,\ldots,y_{t-1}$ such that  $(x_1,\ldots,x_i)R_P = (y_1,\ldots,y_i)R_P$ for each $1 \leq i \leq t-1$.  
 Then $Q(R/(y_1,\ldots,y_{t-1})R)$ is semilocal and zero-dimensional, so repeating the argument from the first paragraph  for the ring $R/(y_1,\ldots,y_{t-1})R$ yields $y_t \in P$ such that $y_1,\ldots,y_{t-1}, y_t $ is a regular sequence  in $P$ and  $(y_1,\ldots,y_{t-1},y_t)R_P = (x_1,\ldots,x_{t-1},x_t)R_P$.  
   By induction, the proof is complete.       
\end{proof}

    \begin{theorem} \label{localization} Let $R$ be a regularly subperfect  ring. For each prime ideal $P$  of $R$, the ring $R_P$ is regularly subperfect.   
    \end{theorem}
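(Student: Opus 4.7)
The overall strategy is to reduce the theorem to the assertion that localizing a subperfect ring at a prime yields a subperfect ring. Given a regular sequence $x_1, \ldots, x_n$ in $R_P$, Lemma~\ref{replace} produces a regular sequence $y_1, \ldots, y_n \in P$ in $R$ with $(x_1, \ldots, x_i)R_P = (y_1, \ldots, y_i)R_P$ for each $i$. Since $R$ is regularly subperfect, $S := R/(y_1, \ldots, y_n)R$ is subperfect, and the canonical isomorphism $R_P/(x_1, \ldots, x_n)R_P \cong S_{\bar P}$ with $\bar P = P/(y_1, \ldots, y_n)R$ reduces the theorem to showing that $S_{\bar P}$ is subperfect.

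To verify that $S_\mathfrak{q}$ is subperfect for a subperfect ring $S$ and $\mathfrak{q} \in \mathrm{Spec}(S)$, I would use Lemma~\ref{H}(ii). Let $Q_1, \ldots, Q_m$ be the minimal primes of $S$, reordered so that $Q_1, \ldots, Q_k$ are exactly those contained in $\mathfrak{q}$; these localize to give the minimal primes of $S_\mathfrak{q}$, a finite set. The nilradical $N(S_\mathfrak{q})$ equals $N(S)\cdot S_\mathfrak{q}$ and inherits T-nilpotency from $N(S)$ by rewriting any sequence in $N(S_\mathfrak{q})$ as fractions with nilpotent numerators and invoking T-nilpotency in $S$ to kill the product of those numerators. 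The nontrivial step is that every zero-divisor of $S_\mathfrak{q}$ lies in some minimal prime. For $r \in S$ with $r/1$ a zero-divisor in $S_\mathfrak{q}$, if $r/1 \in N(S_\mathfrak{q})$ then $r/1$ lies in every minimal prime; otherwise, using $\mathrm{ann}_{S_\mathfrak{q}}(r/1) = \mathrm{ann}_S(r) \cdot S_\mathfrak{q} \neq 0$ one picks $a \in \mathrm{ann}_S(r)$ with $a/1 \neq 0$ in $S_\mathfrak{q}$ and works in the reduced quotient $S/N$, where the annihilator of $\bar a$ has the explicit form $\bigcap_{l \notin I(a)} \bar Q_l$ with $I(a) = \{l : a \in Q_l\}$. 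Applying prime avoidance to this intersection, which is contained in $\bar\mathfrak{q}$, yields some $l_0 \leq k$ with $a \notin Q_{l_0}$; then primality of $Q_{l_0}$ together with $ar = 0$ forces $r \in Q_{l_0} \subseteq \mathfrak{q}$, so $r/1 \in Q_{l_0} S_\mathfrak{q}$, as desired.

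The principal obstacle is the passage from $a/1 \neq 0$ in $S_\mathfrak{q}$ to the stronger statement that $\bar a /1 \neq 0$ in $(S/N)_{\bar\mathfrak{q}}$, which is what legitimises the reduced-case prime avoidance; the two nonvanishing conditions diverge precisely when $a/1$ is a nonzero nilpotent in $S_\mathfrak{q}$. I would handle this by isolating the case $r/1 \in N(S_\mathfrak{q})$ from the outset and, in the remaining case, exploiting T-nilpotency of $N(S)$ together with Lemma~\ref{H}(iii) (regular cosets of $N$ being represented by regular elements of $S$) to arrange that the witness $a$ has nonvanishing image in the reduced localization, thereby reducing the analysis to the reduced subperfect ring $S/N$ where the identification $\ker(S \to S_\mathfrak{q}) = \bigcap_{l \leq k} Q_l$ makes the annihilator computations transparent.
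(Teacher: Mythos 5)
Your reduction is sound and runs parallel to the paper's: Lemma~\ref{replace} converts a regular sequence in $R_P$ into one coming from $R$, after which everything rests on the single assertion that the localization of a subperfect ring at a prime is again subperfect. (The paper performs the same reduction by induction on the length of the sequence, and disposes of the key assertion by citing Lipman's theorem that $Q(S_{\mathfrak{q}})=Q(S)_{S\setminus \mathfrak{q}}$ whenever $Q(S)$ is zero-dimensional, so that $Q(S_{\mathfrak{q}})$ is a localization of a perfect ring and hence perfect.) The gap lies in your direct verification of that assertion via Lemma~\ref{H}(ii), at exactly the point you flag. The problematic configuration is not the one you isolate ($r/1$ nilpotent) but the one where $r/1$ is a \emph{non-nilpotent} zero-divisor of $S_{\mathfrak{q}}$ whose entire annihilator is nilpotent: then every witness $a$ has $\bar a/1=0$ in $(S/N)_{\bar{\mathfrak{q}}}$, and no re-choice of witness can produce one with nonvanishing reduced image, so the reduction to $S/N$ never gets off the ground. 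This genuinely occurs: in $S=k[x,y]/(x^2y)$ (subperfect, with minimal primes $(x)$ and $(y)$) localized at $\mathfrak{q}=(x,y)$, the element $x/1$ is a non-nilpotent zero-divisor while $\mathrm{ann}_S(x)=(xy)=N$, so all witnesses are nilpotent. Here the desired conclusion holds because $x$ already lies in a minimal prime, but your argument cannot see that, since it only locates a minimal prime through the witness.

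To close the gap one must show that if $r$ avoids every minimal prime $Q_l$ contained in $\mathfrak{q}$, then $r/1$ is regular in $S_{\mathfrak{q}}$ even when $\mathrm{ann}_S(r)\subseteq N$. One route: write $Q(S)=A_1\times\cdots\times A_m$ as a finite product of local rings with nil maximal ideals (Lemma~\ref{perf}(c)), with $A_l$ corresponding to $Q_l$. If $ar=0$, the component of $a$ in each $A_l$ with $Q_l\subseteq \mathfrak{q}$ vanishes, because $r$ maps to a unit there; choosing by prime avoidance $u\in\bigcap_{Q_l\not\subseteq\mathfrak{q}}Q_l$ with $u\notin\mathfrak{q}$, the components of $u$ in the remaining factors are nilpotent, so $u^ja=0$ for large $j$ and hence $a/1=0$ in $S_{\mathfrak{q}}$. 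Note that this uses the perfectness of $Q(S)$ as a whole, not merely the three conditions of Lemma~\ref{H}(ii) taken separately; that is why the paper's appeal to Lipman's result is the cleaner route. The remaining parts of your argument (finiteness of the set of minimal primes of $S_{\mathfrak{q}}$, T-nilpotency of its nilradical, and the reduced-case annihilator computation) are correct.
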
 
    
    \begin{proof}  Let $P$ be a prime ideal of $R$.  Since $Q(R)$ is zero-dimensional, $Q(R_P) = Q(R)_{R \setminus P}$; see \cite[Proposition~1 and Corollary~1]{Lip}. Thus $Q(R_P)$ is perfect  since $Q(R)$ is, and so $R_P$ is subperfect.   It follows that the localization of a regularly subperfect ring at a prime ideal has the property that the empty regular sequence is subperfect.

     We now prove the theorem by induction on the length of regular sequences in $R_P$.  
   Let $n>0$, and suppose that for every regularly subperfect  ring $S$ 
   and prime ideal $L$ of $S$, every regular sequence of length $< n$ in $S_L$ is subperfect. 
  Let $x_1,\ldots,x_n $ be  a sequence in $ R$ whose image  in $R_P$ is a regular sequence. 
      By Lemma~\ref{replace} there is $y \in R^\times$ such that $x_1R_P = yR_P$.  
         Since  $R/yR$ is regularly subperfect and the image of the sequence $x_2,\ldots,x_n$ in $R_P/x_1R_P = R_P/yR_P$ is a regular sequence of length~$n-1$, the induction hypothesis implies that  $R_P/x_1R_P$ is regularly subperfect. Therefore, 
        the image of the sequence $x_2,\ldots,x_n$ in $R_P$ is a subperfect regular sequence,  and hence so is the image of the sequence $x_1, x_2, \ldots,x_n$ in $R_P$. 
     \end{proof} 
        
        \begin{corollary} \label{localization cor} Let $R$ be a regularly subperfect ring. If $P$ is a prime ideal of finite height $n$, then $R_P$ is $n$-subperfect. 
        \end{corollary}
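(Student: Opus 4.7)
The plan is to combine Theorem~\ref{localization} with Corollary~\ref{Krull2} and the standard fact that the localization $R_P$ is a local ring whose unique maximal ideal $PR_P$ has the same height as $P$.

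First, I would invoke Theorem~\ref{localization} to conclude that $R_P$ is regularly subperfect. This handles the ``regularly subperfect'' half of the characterization in Corollary~\ref{Krull2}, and already uses the bulk of the work done in this section.

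Next, since $R_P$ is local, it has a unique maximal ideal $PR_P$. Because prime ideals of $R$ contained in $P$ correspond bijectively (via extension and contraction) to prime ideals of $R_P$, we have $\operatorname{ht}(PR_P) = \operatorname{ht}(P) = n$. Thus every maximal ideal of $R_P$ has height $n$ (there is only one).

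Finally, applying Corollary~\ref{Krull2} to $R_P$ --- which is a regularly subperfect ring all of whose maximal ideals have height $n$ --- yields that $R_P$ is $n$-subperfect. There is no real obstacle here: the whole statement is essentially a bookkeeping consequence of the two preceding results, and the main substance was already absorbed into Theorem~\ref{localization}.
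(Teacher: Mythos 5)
Your proposal is correct and follows essentially the same route as the paper: the paper's proof simply cites Theorem~\ref{localization} (to get that $R_P$ is regularly subperfect) together with Theorem~\ref{ideal} (to identify the common length of maximal regular sequences in the local ring $R_P$ with $\operatorname{ht}(PR_P)=\operatorname{ht}(P)=n$). Routing the second step through Corollary~\ref{Krull2} instead of Theorem~\ref{ideal} directly is only a repackaging, since that corollary is itself an immediate consequence of Theorem~\ref{ideal}.
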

        
        \begin{proof} This follows from Theorems~\ref{ideal}  {and~\ref{localization}.}  
        \end{proof}

    \begin{corollary} \label{CM char}  The following are equivalent for a  noetherian ring $R$.
 
 \begin{enumerate} 
 
 \item $R$  is Cohen--Macaulay. 
 
 \item  $R$ is regularly subperfect.  
 
 \item $R_M$ is  {\rm ht}$(M)$-subperfect  for each maximal ideal $M$ of $R$. 

 \end{enumerate}
 \end{corollary}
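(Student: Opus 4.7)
The plan is to close the cycle $(1) \Rightarrow (2) \Rightarrow (3) \Rightarrow (1)$, drawing on the characterization of noetherian Cohen--Macaulay rings as those rings in which ideals generated by regular sequences are unmixed (equivalently, locally, as those having depth equal to dimension).

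For $(1) \Rightarrow (2)$, I would fix a regular sequence $x_1,\ldots,x_i$ in the noetherian Cohen--Macaulay ring $R$ and argue that $R/(x_1,\ldots,x_i)R$ is subperfect. By the unmixed property, $(x_1,\ldots,x_i)R$ has no embedded primes, so $\dim Q(R/(x_1,\ldots,x_i)R) = 0$. Because $R$ is noetherian, $Q(R/(x_1,\ldots,x_i)R)$ is also a noetherian, semilocal, zero-dimensional ring, hence artinian, and therefore perfect. Thus every regular sequence of $R$ is subperfect.

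For $(2) \Rightarrow (3)$, let $M$ be a maximal ideal of $R$. By Theorem~\ref{localization}, $R_M$ is regularly subperfect. Since $R$ is noetherian, ht$(M)$ is finite, so Corollary~\ref{localization cor} yields that $R_M$ is ht$(M)$-subperfect, which is exactly (3).

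For $(3) \Rightarrow (1)$, I would use the fact that Cohen--Macaulayness of a noetherian ring is a local condition at maximal ideals, so it suffices to check that each $R_M$ is Cohen--Macaulay. By hypothesis, $R_M$ is ht$(M)$-subperfect, so by Corollary~\ref{Krull} (applied to $R_M$) it has Krull dimension ht$(M)$ and admits a regular sequence of that length. Since $R_M$ is a noetherian local ring of dimension equal to its depth, $R_M$ is Cohen--Macaulay, and therefore so is $R$.

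The main obstacle is really just the first implication: one must recognize that the classical unmixedness of ideals generated by regular sequences, combined with the noetherian hypothesis on $R/(x_1,\ldots,x_i)R$, forces the total quotient ring to be artinian (and hence perfect). The remaining implications are almost immediate given the machinery of Section~3 and Theorem~\ref{localization}.
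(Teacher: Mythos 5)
Your proposal is correct and follows essentially the same route as the paper: unmixedness plus noetherianity gives that $Q(R/(x_1,\ldots,x_i)R)$ is artinian, hence perfect, for $(1)\Rightarrow(2)$; Corollary~\ref{localization cor} gives $(2)\Rightarrow(3)$; and the existence of a maximal regular sequence of length ${\rm ht}(M)$ in $MR_M$ gives $(3)\Rightarrow(1)$. The only cosmetic difference is that you credit Corollary~\ref{Krull} for that last regular sequence, whereas it comes from the definition of $n$-subperfect (or Theorem~\ref{ideal}, as the paper cites); Corollary~\ref{Krull} only supplies the dimension count.
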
 
 
 \begin{proof}
  To see that (1) implies (2),   let $x_1,\ldots,x_n$ be a regular sequence in $R$, and let $0 \leq i\leq n$.  By the Unmixedness Theorem \cite[Theorem~2.1.6, p.~59]{BH}, $x_1,\ldots,x_i$ is unmixed (as is the empty regular sequence). Since $R$ is noetherian, the zero-dimensional ring $Q(R/(x_1,\ldots,x_i)R)$ is semilocal, hence artinian, hence perfect. Consequently, {the sequence} 
 $x_1,\ldots,x_n$ is subperfect.  
 
  That (2) implies (3) follows from   Corollary~\ref{localization cor}. For (3) implies (1),  observe that  {if $R_M$ is ht$(M)$-subperfect for a maximal ideal $M$,} then by {Theorem}~\ref{ideal} the maximal ideal of $R_M$ contains a maximal regular sequence of length equal to the height of $M$. Therefore, $R$ is Cohen--Macaulay. 
   \end{proof}

A topological space is {\it noetherian} if its open sets satisfy the ascending chain condition.  It follows that every closed subset of a noetherian space is a union of finitely many irreducible components. Thus, if $R$ is a ring for which $\Spec(R)$ is noetherian, then each proper ideal of $R$ has {but} finitely many minimal prime divisors.

    \begin{theorem} \label{local global} Let $R$ be a ring of finite Krull dimension. Then   
     $R$ is regularly subperfect if and only if $\Spec(R)$ is noetherian and    $R_M$ is regularly subperfect for each maximal ideal $M$ of $R$.    \end{theorem}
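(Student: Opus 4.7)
The plan is to treat the two implications separately; the forward direction's claim that $\Spec R$ is noetherian is the main obstacle, while the converse is a more routine localization argument.

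For the converse ($\Leftarrow$), I would fix a regular sequence $x_1, \ldots, x_k$ in $R$, set $R' = R/(x_1, \ldots, x_k)R$, and verify the three conditions of Lemma~\ref{H}(ii) to conclude $R'$ is subperfect. Since $\Spec R' = V(x_1, \ldots, x_k)$ is a closed subspace of the noetherian $\Spec R$, it is itself noetherian, so $R'$ has only finitely many minimal primes. Given a zero-divisor $z \in R'$ with $zy = 0$ and $y \neq 0$, I choose a maximal ideal $M'$ of $R'$ containing $\mathrm{Ann}_{R'}(y)$ and lift to a maximal ideal $M$ of $R$ containing $(x_1, \ldots, x_k)R$; since regular sequences survive localization, $R'_{M'} = R_M/(x_1, \ldots, x_k)R_M$ is subperfect (from regular subperfectness of $R_M$), so the image of $z$ lies in a minimal prime of $R'_{M'}$, which pulls back to a minimal prime of $R'$ containing $z$. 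For T-nilpotency of $\mathrm{Nil}(R')$, given $y_1, y_2, \ldots \in \mathrm{Nil}(R')$, I form the ascending union $A = \bigcup_m \mathrm{Ann}_{R'}(y_1 \cdots y_m)$; if $A = R'$ then some $y_1 \cdots y_m = 0$, while otherwise $A$ lies in some maximal $M'$, but then T-nilpotency of $\mathrm{Nil}(R'_{M'})$ supplies $s \in A \setminus M'$, contradicting $A \subseteq M'$. Hence $R'$ is subperfect for every regular sequence, and $R$ is regularly subperfect.

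For the forward direction ($\Rightarrow$), the regular subperfectness of each $R_M$ is immediate from Theorem~\ref{localization}, so the task is to show $\Spec R$ is noetherian. I plan to induct on $n = \dim R$: when $n = 0$, Proposition~\ref{dim} forces $R = Q(R)$ to be perfect and hence semilocal, so $\Spec R$ is finite. For $n \geq 1$, I note that for every regular $x \in R$ the ring $R/xR$ is regularly subperfect (a regular sequence in $R/xR$ lifts to one in $R$ by prepending $x$) with $\dim R/xR \leq n-1$, so $V(x) \cong \Spec(R/xR)$ is noetherian by induction. Noetherianness of $\Spec R$ then follows from the Ohm--Pendleton criterion, that every prime is the radical of a finitely generated ideal and every ideal has only finitely many minimal primes. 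For the first of these, given a prime $P$ of height $h$, Corollary~\ref{localization cor} yields $R_P$ is $h$-subperfect, Lemma~\ref{replace} lifts a maximal regular sequence of $PR_P$ to a regular sequence $y_1, \ldots, y_h \in P$ of $R$, and the subperfect quotient $R/(y_1, \ldots, y_h)R$ has finitely many minimal primes including $P$; adjoining finitely many elements of $P$ via prime avoidance pins $P$ down as the unique minimal prime of the enlarged finitely generated ideal, whose radical is therefore $P$.

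The main obstacle is the second Ohm--Pendleton condition, that every ideal $J$ of $R$ has finitely many minimal primes. I expect to argue by induction on $\hgt J$: with $h = \hgt J$, choose a maximal regular sequence $(y_1, \ldots, y_h) \subseteq J$; the image of $J$ in the subperfect ring $R/(y_1, \ldots, y_h)R$ consists of zero-divisors, so by prime avoidance $J$ lies in one of the finitely many height-$h$ minimal primes $Q_1, \ldots, Q_s$ over $(y_1, \ldots, y_h)R$, and the height-$h$ minimal primes of $J$ are precisely those $Q_r$ that contain $J$. A minimal prime of $J$ of height exceeding $h$ must strictly contain some $Q_r$ with $Q_r \not\supseteq J$; passing to $R/Q_r$ yields a minimal prime of the nonzero ideal $(J + Q_r)/Q_r$ in a domain of strictly smaller Krull dimension, where one would like to invoke the outer induction on $n$. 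The delicate step is that $R/Q_r$ need not itself be regularly subperfect, so this reduction requires a separate argument exploiting the catenarity of $R$ from Corollary~\ref{Krull} together with the height/length correspondence in Theorem~\ref{ideal}.
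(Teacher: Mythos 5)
Your converse direction is sound and is essentially the paper's argument: one checks subperfectness of $R/(x_1,\ldots,x_k)R$ locally at maximal ideals, using the noetherian spectrum only to get finitely many minimal primes. The differences (routing through Lemma~\ref{H}(ii), the annihilator-union argument for T-nilpotency) are cosmetic.

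The forward direction is where the real problem lies, and it is not only the step you flag. First, the step you present as complete is not: after lifting a maximal regular sequence $y_1,\ldots,y_h \in P$ and listing the minimal primes $P = Q_1, Q_2,\ldots,Q_s$ of $(y_1,\ldots,y_h)R$, adjoining elements $z_i \in P \setminus Q_i$ does not make $P$ the unique minimal prime of the enlarged ideal $J$. A minimal prime $L$ of $J$ need only contain some $Q_i$; for $i \geq 2$ it then contains $Q_i$ properly (since $z_i \in L \setminus Q_i$) and has height $>h$, but nothing forces $L \supseteq P$, so $\sqrt{J}$ may be a proper intersection $P \cap L \cap \cdots$ rather than $P$. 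Iterating to kill the new components requires knowing $J$ has finitely many minimal primes, which is exactly your unfinished second condition, so the repair is circular. Second, the part you do flag as delicate genuinely fails as set up: $R/Q_r$ need not be regularly subperfect (the paper makes this point just before Lemma~\ref{mod P}), so the outer induction cannot be invoked there. Incidentally, the two conditions you cite are not independent: Ohm--Pendleton prove that ``every prime is the radical of a finitely generated ideal'' alone characterizes noetherian spectrum, so the half you could not finish is redundant --- but the half you thought was finished is the one with the gap. The paper avoids all of this with a short induction on $\dim R$: write $\Spec(R) = \bigcup_j V(P_j)$ over the finitely many minimal primes; a proper closed subset of $V(P_j)$ sits inside $V(rR + P_j)$ for some $r \notin P_j$, and by prime avoidance one perturbs $r$ to a regular element $r + p_j$ with $p_j \in P_j$, so that $V(rR+P_j)$ is a subspace of $\Spec(R/(r+p_j)R)$, which is noetherian by induction because $R/(r+p_j)R$ is regularly subperfect of smaller dimension. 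You should redo the forward direction along these lines.
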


      \begin{proof}   
        Suppose  $R$ is regularly subperfect. 
    By   Theorem~\ref{localization}, $R_M$ is  regularly subperfect for each maximal ideal $M$ of $R$. 
       The proof that  $\Spec(R)$ is noetherian is by induction on $\dim R$.
     If $\dim R =0$, then $R$ is subperfect, hence perfect, since the ideal $(0)$ of $R$ is generated by the empty regular sequence; thus $\Spec(R)$ is noetherian {in this case}. Suppose $\dim R>0$, and  for each $0 \leq k < \dim R$  every $k$-dimensional regularly subperfect ring  has a noetherian spectrum. Since $R$ is subperfect,  
   $R$ has {only} finitely many minimal prime ideals $P_1,\ldots,P_m$. 
Thus $\Spec(R$) is a finite union of the closed sets consisting of the prime ideals containing a given minimal prime ideal $P_j$. To prove that $\Spec(R$) is noetherian, we need only verify that each of the spaces $\Spec(R/P_j$) is noetherian.  {A space is noetherian if and only if it satisfies the descending chain condition on closed sets,  therefore} we need only prove that every proper closed subset of $\Spec(R/P_j$) is noetherian. Every  proper closed subset of $\Spec(R/P_j$) is homeomorphic to a subspace of $\Spec(R/(rR+P_j)$) for some $r \in R \setminus P_j$. Therefore, we treat only  spectra of rings of the latter form. 

{Suppose $r \in R \setminus P_j$  for some  $1 \leq j \leq m$,} and choose $p_j \in R$ such that $p_j$ is contained in exactly the minimal prime ideals of $R$ that do not contain $r$.  (This is possible by prime avoidance and the fact that  there are only finitely many minimal prime ideals of $R$.)  In particular, $p_j \in P_j$.  
  {Evidently,  $r+p_j \not \in P_1 \cup \cdots \cup P_m$, so that} $r+p_j \in R^\times$.  Thus $R/(r+p_j)R$ inherits from $R$ the property that each  regular sequence is subperfect.  
    By the induction hypothesis, $\Spec(R/(r+p_j)R)$ is a noetherian space. {As a subspace of a noetherian space,}  $\Spec(R/(rR+P_j))$ is noetherian.  {This completes the proof} that $\Spec(R$) is a noetherian space.

    Conversely, suppose   $\Spec(R)$ is noetherian, and $R_M$ is  regularly subperfect for each maximal ideal $M$ of $R$. Let $x_1,\ldots,x_t$ be a (possibly empty) regular sequence in $R$, and let $I =(x_1,\ldots,x_t)R$.   For each maximal ideal $M$ containing $I$,  the images of $x_1,\ldots, x_t$ in $R_M$ form a regular sequence, so $R_M/IR_M$ is subperfect by assumption.    
    We claim that $Q(R/I)$ is zero-dimensional. Let $r,s \in R$ such that $rs \in I$ and  $r$ is not contained in any minimal prime ideal of $I$. It suffices to show that $s \in I$.  If $M$ is any maximal ideal of $R$ containing $I$, then since $R_M/IR_M$ is subperfect and $rR_M$ is not a subset of  any minimal prime ideal of $IR_M$, we have $sR_M \subseteq   IR_M$.  Since this is true for each maximal ideal $M$ containing $I$, we conclude that $s \in I$. This proves that every zero-divisor in $R/I$ is contained in a minimal prime ideal of $R/I$. Therefore, $Q(R/I)$ is zero-dimensional.  
    
    Since $\Spec(R)$ is noetherian, $I$ has only finitely many minimal prime ideals $P_1,\ldots,P_m$, so $Q(R/I)$ is also semilocal. For each $j$, $R_{P_j}/IR_{P_j}$ is T-nilpotent, so it follows that $Q(R/I)$ has T-nilpotent nilradical, and hence $Q(R/I)$ is perfect.  This proves that every regular sequence in $R$ (including the empty sequence) is subperfect. 
    \end{proof}

    \begin{corollary} \label{local global cor} Assume $n \geq 0$. A ring $R$ is $n$-subperfect if and only if $\Spec(R)$ is noetherian and $R_M$ is $n$-subperfect for each maximal ideal $M$ of $R$.
    \end{corollary}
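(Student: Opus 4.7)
The plan is to combine Corollary~\ref{Krull2} with Theorem~\ref{local global}, using Corollary~\ref{Krull} to handle the finite-dimensionality hypothesis that Theorem~\ref{local global} requires.

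For the forward direction, suppose $R$ is $n$-subperfect. Then $R$ is regularly subperfect by definition, and $\dim R = n$ by Corollary~\ref{Krull}, so Theorem~\ref{local global} applies and yields that $\Spec(R)$ is noetherian and each $R_M$ is regularly subperfect. To upgrade to ``$n$-subperfect,'' I would observe that by Corollary~\ref{Krull} each maximal ideal $M$ of $R$ has height $n$, hence the maximal ideal $MR_M$ of $R_M$ has height $n$ as well. Corollary~\ref{localization cor} (or equivalently Corollary~\ref{Krull2} applied to the local ring $R_M$) then gives that $R_M$ is $n$-subperfect.

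For the converse, suppose $\Spec(R)$ is noetherian and each $R_M$ is $n$-subperfect. By Corollary~\ref{Krull} each $R_M$ has Krull dimension $n$, so $\dim R = n$ (since every prime of $R$ is contained in some maximal ideal and localization preserves height of primes contained in $M$). With finite dimension in hand, Theorem~\ref{local global} applies in the reverse direction: since each $R_M$ is in particular regularly subperfect, so is $R$. Finally, every maximal ideal of $R$ has height $n$ because the height of $M$ in $R$ equals the height of $MR_M$ in $R_M$, which is $n$ by Corollary~\ref{Krull} applied to $R_M$. Corollary~\ref{Krull2} now concludes that $R$ is $n$-subperfect.

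There is no serious obstacle here; the work has been done in the preceding results. The one small subtlety worth noting is that Theorem~\ref{local global} is stated under the standing hypothesis that $R$ has finite Krull dimension, which is why it is important to first extract the dimension bound in each direction (from Corollary~\ref{Krull} in the forward direction, and from the local hypothesis together with noetherianness of $\Spec(R)$ in the converse) before invoking the global-local theorem.
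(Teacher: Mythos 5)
Your proposal is correct and follows essentially the same route as the paper, whose entire proof is ``Apply Corollary~\ref{Krull2} and Theorem~\ref{local global}.'' You have simply filled in the details the paper leaves implicit, including the worthwhile observation that the finite-dimensionality hypothesis of Theorem~\ref{local global} must be secured (via Corollary~\ref{Krull}) before that theorem can be invoked in either direction.
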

    
    \begin{proof} 
    Apply Corollary~\ref{Krull2} and Theorem~\ref{local global}. 
    \end{proof}

\begin{remark} \label{generality} The proofs  of Lemma~\ref{replace} and Theorems~\ref{localization} and~\ref{local global}  show that in the hypotheses of these results the property of being regularly subperfect can be replaced by the more general condition that every regular sequence is finitely unmixed. 
\end{remark}

We record an immediate consequence of Corollary \ref{local global cor}:

    \begin{corollary} \label{direct}  The direct product of a finite number of $n$-subperfect   rings is likewise  $n$-subperfect. \qed  \end{corollary}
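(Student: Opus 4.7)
The plan is to deduce this directly from Corollary \ref{local global cor}, which characterizes $n$-subperfectness as a local-global property modulo a noetherian spectrum condition. So I would let $R = R_1 \times \cdots \times R_k$ with each $R_i$ an $n$-subperfect ring, and verify the two hypotheses of Corollary \ref{local global cor} for $R$.

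First, I would verify that $\Spec(R)$ is noetherian. Each $\Spec(R_i)$ is noetherian by Corollary \ref{local global cor} applied to $R_i$, and $\Spec(R)$ is the disjoint union (as a topological space) of the spaces $\Spec(R_i)$ via the usual identification of prime ideals of a finite product. A finite disjoint union of noetherian spaces is noetherian, so $\Spec(R)$ is noetherian.

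Second, I would verify that each localization of $R$ at a maximal ideal is $n$-subperfect. Every maximal ideal $M$ of $R$ has the form $R_1 \times \cdots \times M_i \times \cdots \times R_k$ for some index $i$ and some maximal ideal $M_i$ of $R_i$; localizing at $M$ inverts the idempotent with a $1$ in the $i$th coordinate and $0$ elsewhere, killing the other factors, so $R_M \cong (R_i)_{M_i}$. Since $R_i$ is $n$-subperfect, Corollary \ref{local global cor} (applied to $R_i$) tells us $(R_i)_{M_i}$ is $n$-subperfect, so $R_M$ is $n$-subperfect.

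With both hypotheses checked, Corollary \ref{local global cor} applied in the reverse direction yields that $R$ is $n$-subperfect. There is no real obstacle here; the argument is essentially bookkeeping about how spectra and localizations behave under finite products, once the local-global criterion is in hand.
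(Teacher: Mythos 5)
Your proof is correct and matches the paper's intended argument: the paper presents this corollary as an immediate consequence of Corollary~\ref{local global cor}, and your verification of the two hypotheses (noetherian spectrum of a finite product, and localizations at maximal ideals recovering localizations of the factors) is exactly the bookkeeping that justifies that claim.
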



\medskip
\section{More on $n$-Subperfect Rings} 

\medskip

We would like to point out  several important properties that are shared by $n$-subperfect rings with Cohen--Macaulay rings.  The first of these properties, proved by Hochster--Eagan \cite{HE} for Cohen--Macaulay rings, concern descent of the $n$-subperfect property to direct summands   and {to}  rings of  invariants of $n$-subperfect rings.

\begin{theorem}\label{summand} Let $R$ be a $n$-subperfect ring for some $n \geq 0$. If  $S$ is a subring of  $R$ such that $R$ is integral over $S$ and $S$ is a direct summand of $R$ as an $S$-module, then $S$ is $n$-subperfect.  
\end{theorem}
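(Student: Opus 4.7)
My plan is to proceed by induction on $n$.  For the base case $n=0$, $R$ is perfect, and the task is to show $S$ is perfect. By Lemma~\ref{perf}(c) it suffices to verify that $S$ is semilocal with T-nilpotent Jacobson radical. Semilocality follows from integrality, since every maximal ideal of $S$ is the contraction of a maximal ideal of $R$; and $J(S) = J(R) \cap S \subseteq J(R)$ is T-nilpotent because $J(R)$ is.

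For the inductive step ($n \geq 1$), I plan to verify the two conditions of Corollary~\ref{reduction}: (i) $S$ is subperfect, and (ii) for every $x \in S$ regular in $S$, the ring $S/xS$ is $(n-1)$-subperfect. The linchpin will be the claim that \emph{every minimal prime $P$ of $R$ contracts to a minimal prime of $S$, and conversely every minimal prime of $S$ arises this way}. To prove the forward direction I will invoke Corollary~\ref{Krull}: $R$ is equidimensional of Krull dimension $n$, so $\dim R/P = n$ for each minimal prime $P$ of $R$. Since $R/P$ is integral over $S/(P \cap S)$ and $\dim S = \dim R = n$, we get $\dim S/(P \cap S) = \dim R/P = n = \dim S$, forcing $\mathrm{ht}(P \cap S) = 0$. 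The converse is a routine lying-over argument: a minimal prime $\mathfrak{q}$ of $S$ is lifted to some prime $Q$ of $R$, and any minimal prime $P \subseteq Q$ satisfies $P \cap S \subseteq \mathfrak{q}$ with $P \cap S$ minimal in $S$ by the forward direction, forcing $P \cap S = \mathfrak{q}$. As a consequence, for $s \in S$, the conditions ``$s$ is a zero-divisor in $S$'', ``$s$ lies in a minimal prime of $S$'', ``$s$ lies in a minimal prime of $R$'', and ``$s$ is a zero-divisor in $R$'' are equivalent; equivalently, \emph{$s \in S$ is $S$-regular iff it is $R$-regular}.

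Condition (i) then drops out: $S$ has finitely many minimal primes (contractions of the finitely many minimal primes of the subperfect ring $R$), every zero-divisor of $S$ lies in one of them, and the nilradical of $S$ sits inside the T-nilpotent nilradical of $R$, hence is itself T-nilpotent. So $Q(S)$ is semilocal, zero-dimensional, with T-nilpotent nilradical, and therefore perfect. For condition (ii), fix $x \in S$ regular in $S$; by the equivalence above, $x$ is $R$-regular as well, so Proposition~\ref{factor prop} gives that $R/xR$ is $(n-1)$-subperfect. The decomposition $R = S \oplus M$ as $S$-modules (the direct summand hypothesis) yields $xR = xS \oplus xM$ and hence $R/xR = S/xS \oplus M/xM$ as $S/xS$-modules, so $S/xS$ is a direct summand of $R/xR$ as an $S/xS$-module; moreover $R/xR$ is clearly integral over $S/xS$. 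The induction hypothesis applied with $n-1$ in place of $n$ then gives that $S/xS$ is $(n-1)$-subperfect, and Corollary~\ref{reduction} closes the induction.

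I expect the main obstacle to be the minimal-prime claim. Without it, an $S$-regular element of $S$ could belong to a minimal prime of $R$, which would make $R/xR$ fail to be $(n-1)$-subperfect and would destroy the inductive step. The claim works precisely because the $n$-subperfectness of $R$ supplies equidimensionality via Corollary~\ref{Krull}, together with the integrality-preserved dimension equality.
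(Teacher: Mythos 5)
Your proposal is correct and follows essentially the same route as the paper: establish that $S$ is subperfect via Lemma~\ref{H}, induct on $n$ using Corollary~\ref{reduction}, show that $S$-regular elements remain $R$-regular by combining integrality with the equidimensionality supplied by Corollary~\ref{Krull} (the paper phrases this via chains of primes and Corollary~\ref{Krull2}, but it is the same dimension-theoretic argument), and pass to $R/xR \supseteq S/xS$ using $xR \cap S = xS$ from the summand hypothesis. The only cosmetic differences are your explicit minimal-prime correspondence (which the paper leaves partly implicit when asserting that zero-divisors of $S$ lie in minimal primes of $S$) and your direct verification of $xR = xS \oplus xM$ in place of the paper's citation of \cite[Lemma~6.4.4]{BH}.
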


 \begin{proof}  
First we claim that $S$ is subperfect.  
Every minimal prime ideal of $S$ is contracted from a minimal prime ideal of $R$. Since $R$ is subperfect, there are {but}  finitely many minimal prime ideals of $R$, so there are {only} finitely many minimal prime ideals of $S$. 
Moreover, every zero-divisor in $R$ is an element of a minimal prime ideal of  the subperfect ring $R$, so the same holds for $S$. 
 Since the nilradical of $S$ is contained in that of $R$, it is T-nilpotent.  Consequently, $S$ is subperfect.

The proof proceeds now  by induction on $n$. Suppose $n  =0$, so that $R$ is perfect.
Then $\dim R  = 0$, and since $R$ is integral over $S$, we have $\dim S = 0$. Since $S$ is subperfect, this implies $S$ is perfect, {i.e.}  $0$-subperfect.

 {Now suppose $n>0$ and that  the claim holds for $n-1$}.  If every non-zero-divisor of $S$ {were} a unit, then since $S$ is subperfect, {we would have $\dim S = 0$.} $R$ is integral over $S$, whence $\dim R = 0$ {would  follow}. However, $R$ is $n$-subperfect, so $\dim R  = n>0$ by Corollary~\ref{Krull}. Therefore, there exist regular sequences in $S$ of length $>0$.  Let $s \in S^\times$ be a non-unit in $S$.  
   Since $S$ is a summand of $R$, it follows that $sR \cap S = sS$; see 
    \cite[Lemma~6.4.4]{BH}.  Thus $S/sS$ can be viewed as a direct summand of $R/sR$. Moreover, $R/sR$ is integral over $S/sS$.  
    
       To see that $s \in R^\times$, suppose to the contrary that $s$ is a zero-divisor in $R$. Since $R$ is subperfect, $s$ {is contained} in a minimal prime ideal $P_0$  of $R$.  By Corollary~\ref{Krull2}, there is a chain of distinct prime ideals $P_0 \subset P_1 \subset \cdots \subset P_n$, with $P_n$ a maximal ideal of $R$.  
     Since $R$ is integral over $S$, the chain $P_0 \cap S \subset P_1 \cap S \subset \cdots \subset P_n \cap S$ has length $n$.  Again since $R$ is integral over $S$, each chain of prime ideals of $S$ has a chain of prime ideals in $R$ lying over it. Therefore,  Corollary~\ref{Krull2} implies that the length of the longest chain of prime ideals in $S$ is $n$. Consequently, $P_0 \cap S$ is a minimal prime ideal of $R$. However, $s \in P_0 \cap S$ and $s \in S^\times$, a contradiction that implies $s \in R^\times$.   
  
  In view of  $s \in R^\times$, we have  
     $R/sR$ is  $(n-1)$-subperfect by Proposition~\ref{factor prop}.  By the induction hypothesis,  $S/sS$ is $(n-1)$-subperfect.  
 Since this is the case for all non-units $s \in S^\times$, Corollary~\ref{reduction} implies $S$ is $n$-subperfect, {completing}  the induction. 
  \end{proof} 

\begin{corollary}\label{inv} Assume $G$ is a finite group acting on an $n$-subperfect  ring $R$, and the order of $G$ is a unit in $R$. Then the set of invariants,
$$R^G = \{r \in R  : \ g(r) = r \ for \ all \ g \in G\},$$
is again an $n$-subperfect ring.
\end{corollary}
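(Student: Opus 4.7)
The plan is to deduce this immediately from Theorem~\ref{summand} by verifying its three hypotheses for the inclusion $R^G \hookrightarrow R$.

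First, I would observe that $R^G$ is plainly a subring of $R$: it contains $0$ and $1$, and the $G$-invariance is preserved under sums, differences and products because each $g \in G$ acts as a ring endomorphism. Next, for integrality, each $r \in R$ is a root of the monic polynomial
\[
f_r(X) \;=\; \prod_{g \in G}\bigl(X - g(r)\bigr) \;\in\; R[X],
\]
whose coefficients are the elementary symmetric functions of the orbit $\{g(r) : g \in G\}$ and are therefore fixed by every element of $G$. Hence $f_r(X) \in R^G[X]$, so $R$ is integral over $R^G$.

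The remaining hypothesis, that $R^G$ is a direct summand of $R$ as an $R^G$-module, is where the assumption that $|G|$ is a unit in $R$ enters. I would introduce the Reynolds operator
\[
\rho \colon R \longrightarrow R^G, \qquad \rho(r) \;=\; \frac{1}{|G|}\sum_{g \in G} g(r).
\]
Since each $g \in G$ is $R^G$-linear (for $s \in R^G$, $g(sr) = g(s)g(r) = s\,g(r)$), the map $\rho$ is an $R^G$-module homomorphism. Moreover, for $r \in R^G$ we have $g(r) = r$ for all $g$, so $\rho(r) = r$; thus $\rho$ is an $R^G$-linear retraction of the inclusion $R^G \hookrightarrow R$, exhibiting $R^G$ as an $R^G$-module direct summand of $R$.

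With all three hypotheses of Theorem~\ref{summand} verified, that theorem yields that $R^G$ is $n$-subperfect. I do not foresee any real obstacle here: the argument is a standard application of the Reynolds operator combined with the descent result already established in Theorem~\ref{summand}, and the role of the hypothesis that $|G| \in R^{\times}$ is confined to producing the splitting $\rho$.
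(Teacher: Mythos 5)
Your proposal is correct and follows exactly the paper's route: the paper also reduces the corollary to Theorem~\ref{summand} by noting that $R$ is integral over $R^G$ and that $R^G$ is an $R^G$-module direct summand of $R$, merely citing Bruns--Herzog for these two facts rather than writing out the Reynolds operator and the orbit polynomial as you do. Your explicit verifications are the standard ones and are accurate.
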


 \begin{proof} 
As in Bruns--Herzog \cite[pp.~281--283]{BH}, the hypotheses imply that $R^G$ is a direct summand of $R$ and $R$ is integral over $R^G$.  Thus we may apply Theorem~\ref{summand} to obtain the corollary. 
\end{proof} 
 
Lemma \ref{H} makes it possible to get more information on $n$-subperfect rings once we know more about Goldie rings.
   
  A commutative reduced  Goldie ring $R$ is an order in a semisimple ring $Q$ that is the direct product of fields $Q_j$, 
  $$Q = Q_1 \times \dots \times Q_m $$
 (see Lam \cite[Proposition 11.22]{La1}). If $X_j= \sum_{i \ne j} Q_i$, then  $P_j = X_j \cap R \ (j=1, \dots, m)$ is the set of minimal primes of $R$. Furthermore, each $R/P_j$ is an integral domain with $Q_j$ as quotient field. Recall that orders $R,R'$ in a ring $Q$ are {\it equivalent} if $qR {\subseteq} R'$ and $q'R' {\subseteq} R$ for some units $q,q' \in Q$. 

 {\begin{theorem}   \label{Goldie}   A reduced  $ n$-subperfect ring $R$  is a Goldie ring.  {It is  a  subdirect product of a finite number of integral domains of Krull dimension $n$. This subdirect product is equivalent to the direct product of the components.}

   \end{theorem}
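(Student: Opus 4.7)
The plan is to prove the theorem in three stages: first establish that $R$ is Goldie, then realize $R$ as a subdirect product of domains of the correct dimension, and finally verify the equivalence with the direct product.

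First I would observe that the Goldie assertion is essentially immediate from \textbf{Lemma \ref{H}}. Since $R$ is $n$-subperfect, it is in particular subperfect, so by Lemma~\ref{H}(iii) the nilradical $N$ is T-nilpotent and $R/N$ is a (reduced) Goldie ring. Because $R$ is assumed reduced, $N=0$, and hence $R$ itself is a Goldie ring.

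Next, I would invoke the structure recalled just before the theorem: as a reduced commutative Goldie ring, $R$ is an order in a semisimple ring
\[
Q=Q(R)=Q_1\times\cdots\times Q_m,
\]
with each $Q_j$ a field. The minimal prime ideals of $R$ are the contractions $P_j=X_j\cap R$, where $X_j=\sum_{i\ne j}Q_i$, and each $R/P_j$ is an integral domain with quotient field $Q_j$. Since $R$ is reduced, $\bigcap_j P_j=0$, so the canonical homomorphism $R\to\prod_{j=1}^m R/P_j$ is injective, presenting $R$ as a subdirect product of the domains $R/P_j$. To show that each such domain has Krull dimension $n$, I would appeal to \textbf{Corollary \ref{Krull}}: since $R$ is $n$-subperfect, it is equidimensional of dimension $n$, and every minimal prime $P_j$ therefore has coheight $n$, i.e.\ $\dim(R/P_j)=n$.

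The main issue is the equivalence claim, which requires exhibiting a common denominator. Let $e_1,\ldots,e_m$ be the orthogonal idempotents of $Q$ corresponding to the factor decomposition. Each $e_j$ lies in $Q=Q(R)$, so we may write $e_j=a_j/s_j$ with $a_j\in R$ and $s_j\in R^\times$. Set $s:=s_1s_2\cdots s_m\in R^\times$; then $s\,e_j=(\prod_{i\ne j}s_i)\,a_j\in R$ for each $j$. Now identify $\prod_j R/P_j$ with its image in $Q$: under this identification every element has the form $\sum_{j=1}^m e_jr_j$ with $r_j\in R$ (lifting the components). Consequently,
\[
s\cdot\Big(\sum_{j=1}^m e_jr_j\Big)=\sum_{j=1}^m(se_j)r_j\in R,
\]
so $s\,\big(\prod_j R/P_j\big)\subseteq R$ inside $Q$. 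Combined with the evident inclusion $R\subseteq\prod_j R/P_j$, and the fact that $s$ is a unit of $Q$ (being a product of regular elements of $R$), this shows that $R$ and $\prod_j R/P_j$ are equivalent orders in $Q$, completing the proof.

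The principal obstacle is the last step: it requires recognizing that the subdirect embedding $R\hookrightarrow\prod_j R/P_j$ can be promoted to an embedding of orders inside the common total quotient ring $Q$, and then producing the common denominator via the idempotents $e_j$. Once the idempotents are seen to lie in $Q$ (a consequence of $R$ being Goldie, hence an order in the semisimple $Q$), the clearing-of-denominators argument is routine.
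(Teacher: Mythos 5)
Your proposal is correct and follows essentially the same route as the paper's: Lemma~\ref{H} gives the Goldie property (with $N=0$ since $R$ is reduced), Corollary~\ref{Krull} gives $\dim(R/P_j)=n$ for each minimal prime, and the equivalence of orders is obtained by exhibiting a regular common denominator. The only difference is in how that denominator is produced: you clear the denominators of the idempotents $e_j\in Q$, while the paper constructs it directly as $x=\sum_j x_j$ with $x_j\in\bigl(\bigcap_{i\ne j}P_i\bigr)\setminus P_j$, a regular element whose image in $\prod_j R/P_j$ multiplies the $j$-th component back into $R$ --- two implementations of the same idea.
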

  
 \begin{proof}   Assume $R$ is reduced and $n$-subperfect; in view of Lemma \ref{H}, it is a Goldie ring. It has but a finite number of minimal prime ideals $P_1, \dots, P_m$. From  $\cap_{j} P_j =0$ it follows that $R$ is a subdirect product of the integral domains $D_j=R/P_j$ (with quotient fields $Q_j$).   It is clear that $\dim D_j =n$ for each $j$. 
 
 Suppose $x_j \in P_i$  for all  $i \ne j$, but $x_j \notin P_j$. Then $x = \sum_j x_j \in R$ is a regular element, as it is not contained in any $P_j$. Therefore, $x= (x_1+P_1, \dots, x_m+P_m) \in D_1 \oplus \dots \oplus D_m$ is a unit in $Q$. Hence we conclude that $R$ and $R'= D_1 \oplus \dots \oplus D_m$ are equivalent orders in $Q$. 
  \end{proof}}
  
     We observe that {Theorem \ref{Goldie} holds also for the factor ring $R/N$ of  an $n$-subperfect ring $R$  modulo its nilradical $N$, though $R/N$ need not be $n$-subperfect. Note that this factor ring} is restricted in size inasmuch as $R/N$ must have finite uniform dimension. {On the other hand,} Example \ref{matrix} will show that the nilradicals {of $n$-subperfect rings} can have arbitrarily large cardinalities.
     
     We have failed to establish a stronger result in the preceding theorem (viz. that the domains $D_j$ are also $n$-subperfect), because passing modulo a minimal prime ideal, regular sequences do not map in general upon regular sequences, though the converse is true for all regularly subperfect rings as is shown by: 
     
 \begin{lemma} \label{mod P}
Let $R$ be a regularly subperfect ring, and let $P$ be a minimal prime ideal of $R$.  Then for  every regular sequence $y_1,\ldots,y_t$ in $S=R/P$, there is a regular sequence  $x_1,\ldots,x_t \in R$  such that $(x_1,\ldots,x_t)S = (y_1,\ldots,y_t)S$.  
\end{lemma}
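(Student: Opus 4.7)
The plan is to induct on $t$.  For $t=0$ there is nothing to prove, so suppose $t\ge 1$ and that the lemma holds for all shorter regular sequences.  Applying induction to $y_1,\ldots,y_{t-1}$ yields a regular sequence $x_1,\ldots,x_{t-1}\in R$ with $(x_1,\ldots,x_{t-1})S=(y_1,\ldots,y_{t-1})S$ (for $t=1$ this is simply the empty sequence).  Set $I=(x_1,\ldots,x_{t-1})R$ and choose a preimage $r_t\in R$ of $y_t$.  I would attempt $x_t=r_t+\alpha$ for some $\alpha\in P$; this automatically gives $x_t+P=y_t$ and hence $(x_1,\ldots,x_t)S=(y_1,\ldots,y_t)S$, so the problem reduces to choosing $\alpha\in P$ for which $x_t$ is a non-zero-divisor modulo $I$.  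Because $x_1,\ldots,x_{t-1}$ is a regular sequence in the regularly subperfect ring $R$, the ring $R/I$ is subperfect, so by Lemma~\ref{H} its zero-divisors are precisely the union of the finitely many minimal prime divisors $Q_1,\ldots,Q_k$ of $I$, and the task becomes finding $\alpha\in P$ with $r_t+\alpha\notin\bigcup_i Q_i$.

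The crux is the claim that $r_t\notin Q_i$ whenever $P\subseteq Q_i$.  Given such $Q_i$, minimality of $Q_i$ over $I$ rules out any prime strictly between $I$ and $Q_i$, so $Q_i$ is also minimal over $P+I$; consequently $\mathfrak q:=Q_i/(P+I)$ is a minimal prime of $T:=R/(P+I)\cong S/(y_1,\ldots,y_{t-1})S$, and the localization $T_{\mathfrak q}$ is a local ring with a unique prime, in which the maximal ideal coincides with the nilradical.  Were $r_t\in Q_i$, the image of $y_t$ in $T_{\mathfrak q}$ would lie in this nilradical, forcing $y_t^n=0$ in $T_{\mathfrak q}$ for some $n$, and hence $sy_t^n=0$ in $T$ for some $s\in T\setminus\mathfrak q$.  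Since $y_t$ is a non-zero-divisor in $T$ (because $y_1,\ldots,y_t$ is a regular sequence in $S$), so is $y_t^n$, forcing $s=0$, which contradicts $s\notin\mathfrak q$.

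With the claim in hand, the choice of $\alpha$ is prime-avoidance bookkeeping.  Partition the indices as $\mathcal G=\{i:P\subseteq Q_i\}$, $\mathcal B_1=\{i\notin\mathcal G:r_t\in Q_i\}$, and $\mathcal B_2=\{i\notin\mathcal G:r_t\notin Q_i\}$.  For $i\in\mathcal G$ and any $\alpha\in P$ one has $\alpha\in Q_i$ and $r_t\notin Q_i$ (by the claim), so $r_t+\alpha\notin Q_i$ automatically.  I would then pick $\alpha\in P\cap\bigcap_{i\in\mathcal B_2}Q_i$ with $\alpha\notin\bigcup_{i\in\mathcal B_1}Q_i$: such an $\alpha$ exists because a containment $P\cap\bigcap_{i\in\mathcal B_2}Q_i\subseteq Q_{i_0}$ with $i_0\in\mathcal B_1$ would force, by primality of $Q_{i_0}$, either $P\subseteq Q_{i_0}$ (excluded by $i_0\notin\mathcal G$) or $Q_i\subseteq Q_{i_0}$ for some $i\in\mathcal B_2$ distinct from $i_0$ (excluded by minimality of the $Q_j$'s over $I$).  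This $\alpha$ then works: on $\mathcal B_2$ one has $r_t+\alpha\equiv r_t\pmod{Q_i}$ with $r_t\notin Q_i$, and on $\mathcal B_1$ one has $r_t+\alpha\equiv\alpha\pmod{Q_i}$ with $\alpha\notin Q_i$.  The main obstacle is the key claim; once the localization to the zero-dimensional local ring $T_{\mathfrak q}$ lets us exploit the non-zero-divisor property of $y_t$, the rest is routine.
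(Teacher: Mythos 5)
Your proof is correct and follows essentially the same route as the paper's: induct on $t$, lift $y_t$ to $r_t\in R$, note that $R/(x_1,\ldots,x_{t-1})R$ being subperfect gives finitely many minimal primes $Q_i$, observe that $r_t$ automatically avoids those $Q_i$ containing $P$ (because a non-zero-divisor on $S/(y_1,\ldots,y_{t-1})S$ cannot lie in a minimal prime of that ring), and then perturb by an element of $P$ via prime avoidance. The only differences are cosmetic: you spell out the key claim via localization at the minimal prime where the paper merely asserts it, and you carry out the prime-avoidance bookkeeping by hand where the paper cites Kaplansky's Theorem 124.
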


 \begin{proof}  The proof is by induction on the length of the regular sequence. The claim is clearly true for the empty regular sequence. Suppose that $t\geq 0$ and the claim is true for all regular sequences in $S$ of length $t$. Let $y_1,\ldots,y_t,y_{t+1}$ be a regular sequence in $S$.  Then there is a regular sequence 
  $x_1,\ldots,x_t$  in $ R$   such that $(x_1,\ldots,x_t)S = (y_1,\ldots,y_t)S$.  
    Since $R/(x_1,\ldots,x_{t})R$ is subperfect,  {$(x_1,\ldots,x_t)R$ has  but a finite number of minimal prime ideals $L_1,\ldots,L_k$}. Let $x_{t+1} \in R$ such that $x_{t+1} + P = y_{t+1}$.   
    We observe that $P+x_{t+1}R \not \subseteq L_i$ for any $i$. Indeed,
     if $P  \subseteq L_i$ for  some $i$, then $L_i$ is a minimal prime ideal of $(x_1,\ldots,x_t)R + P$. 
     In this case, since $y_1,\ldots,y_{t+1}$ is a regular sequence in $S$ and $(x_1,\ldots,x_t)S = (y_1,\ldots,y_t)S$, it {is impossible to have}   $y_{t+1} \in L_i/P$.  Thus 
      $x_{t+1} \not \in L_i$ {which} shows that $P+x_{t+1}R \not \subseteq L_i$ for every~$i$. By a version of prime avoidance \cite[Theorem 124]{K}, this implies there is  $p \in P$ such that  $x_{t+1} - p \not \in L_i$ for each $i$.  Since $L_1,\ldots,L_k$ are the minimal prime ideals of $(x_1,\ldots,x_t)R$ and $R/ (x_1,\ldots,x_{t})R$ is subperfect, it follows that $x_1,\ldots,x_t,x_{t+1} - p$ is  a regular sequence in $R$ {such that}  $(x_1,\ldots,x_t,x_{t+1}-p)S = (y_1,\ldots,y_{t+1})S$.  This completes the induction and shows that every ideal of $S$ generated by a regular sequence is the image of an ideal of $R$ that is generated by a regular sequence.  
 \end{proof}

     \smallskip

The next theorem shows that for regularly subperfect rings, ideals of the principal class (i.e., ideals $I$ generated by  ht$(I)$ elements) behave like ideals in Cohen--Macaulay rings. 
As we point out after the theorem, this allows us to connect our version of Cohen--Macaulay rings to a  versatile and quite general notion of non-noetherian Cohen--Macaulay rings due to Hamilton and Marley.  
  
\begin{theorem} \label{height} Let $R$ be a regularly subperfect  ring, and let $I$ be an  ideal of $R$ generated by $t$ elements. The following are equivalent:
\begin{itemize}
\item[(1)]  $I$ has height~$t$. 
\item [(2)] $I$ has height at least $t$. 
\item[(3)]  $I$ is generated by a regular sequence of length $t$. 
\end{itemize}
\end{theorem}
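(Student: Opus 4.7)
The plan is to argue $(3)\Rightarrow(1)\Rightarrow(2)\Rightarrow(3)$, with the real work concentrated in the last implication, which I would prove by induction on $t$. For $(3)\Rightarrow(1)$: a regular sequence $x_1,\ldots,x_t$ that generates $I$ is automatically a maximal regular sequence in $I$, since in $R/(x_1,\ldots,x_t)R = R/I$ the image of $I$ is zero, hence consists of zero-divisors. Theorem~\ref{ideal} then gives $\text{ht}(I) = t$. The implication $(1)\Rightarrow(2)$ is immediate.

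For $(2)\Rightarrow(3)$, the base case $t=0$ is vacuous. For $t \geq 1$, since $\text{ht}(I) \geq 1$, the ideal $I$ lies in no minimal prime of $R$; by Lemma~\ref{H}, the minimal primes $P_1,\ldots,P_m$ of $R$ are finite in number and their union is the set of zero-divisors. A standard prime-avoidance argument with a preset term then produces $c_2,\ldots,c_t \in R$ such that $x_1 := a_1 + c_2a_2 + \cdots + c_ta_t$ avoids every $P_j$, so $x_1 \in R^\times$ and $I = (x_1, a_2,\ldots,a_t)R$. The ring $R' := R/x_1R$ remains regularly subperfect: a regular sequence $\bar y_1,\ldots,\bar y_k$ in $R'$ lifts to the regular sequence $x_1,y_1,\ldots,y_k$ in $R$, whose quotient ring coincides with $R'/(\bar y_1,\ldots,\bar y_k)R'$ and is subperfect by hypothesis on $R$.

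The crux is to establish $\text{ht}(I/x_1R) \geq t-1$ in $R'$: once this is in place, the induction hypothesis supplies a regular sequence $\bar x_2,\ldots,\bar x_t \in R'$ generating $I/x_1R$, and lifting yields the desired regular sequence $x_1,x_2,\ldots,x_t$ generating $I$. This is the main obstacle, since in the non-noetherian setting one cannot appeal to Krull's principal ideal theorem; instead I would use Theorem~\ref{ideal} to translate between heights and regular-sequence lengths. For any prime $P \supseteq I$ in $R$, $\text{ht}(P) \geq t$, and since $x_1 \in P$ is regular, iterated use of the equivalence of (1), (2), (3) in Theorem~\ref{ideal} extends $\{x_1\}$ to a regular sequence $x_1, y_2,\ldots,y_n$ inside $P$ of length $n = \text{ht}(P) \geq t$ (at each stage, a regular sequence in $P$ of length less than $\text{ht}(P)$ is non-maximal and therefore extendable). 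The images $\bar y_2,\ldots,\bar y_n$ then form a regular sequence of length $n-1 \geq t-1$ in $P/x_1R$, and Theorem~\ref{ideal} applied to $R'$ gives $\text{ht}(P/x_1R) \geq t-1$. Since every prime of $R'$ containing $I/x_1R$ has this form, the desired inequality $\text{ht}(I/x_1R) \geq t-1$ follows, completing the induction.
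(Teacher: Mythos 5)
Your proof is correct, and its skeleton is the same as the paper's: both use prime avoidance (with the finitely many minimal primes supplied by subperfectness via Lemma~\ref{H}) to replace $a_1$ by a regular element $x_1$ with $I=(x_1,a_2,\ldots,a_t)R$, pass to $R/x_1R$, and induct. The genuine difference lies in the key step, the inequality ${\rm ht}(I/x_1R)\ge t-1$. The paper obtains it by localizing: for a minimal prime $P$ of $I$ with ${\rm ht}(P)={\rm ht}(I)$ it invokes Theorem~\ref{localization} to see that $R_P$ is ${\rm ht}(P)$-subperfect, and then Proposition~\ref{factor prop} together with Corollary~\ref{Krull} to get $\dim R_P/x_1R_P={\rm ht}(P)-1$, whence ${\rm ht}(P/x_1R)={\rm ht}(P)-1$. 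You stay inside $R$: for each prime $P\supseteq I$ you extend $x_1$ to a regular sequence of length at least $t$ contained in $P$ using the extendability step from Theorem~\ref{ideal}, and read off ${\rm ht}(P/x_1R)\ge t-1$ from the image sequence in $R/x_1R$. This buys two things: it bypasses the localization machinery of Section~4 entirely, relying only on Theorem~\ref{ideal}; and it bounds the height of \emph{every} prime of $R/x_1R$ containing $I/x_1R$, which is what the infimum defining height actually requires (the paper's write-up records the computation only for a single minimal prime of $I$, though the same computation applies to all of them). The one phrase needing a touch of care is ``of length $n={\rm ht}(P)$,'' which presumes ${\rm ht}(P)$ finite; since only length $t$ is needed, stopping the extension at length $t$ --- as your parenthetical remark already indicates --- removes the issue.
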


\begin{proof}  That (1) implies (2) is clear, and that (3) implies (1) 
    follows from Theorem~\ref{ideal}. To see that (2) implies (3),  suppose ht$(I) \geq t$.  If  ht$(I) = 0$, then $I$ is generated by the empty regular sequence. The proof now proceeds by induction on ht$(I)$. Suppose that {in a  regularly subperfect  ring, every ideal $I=(x_1,\ldots,x_t)R$} of height at least  ht$(I) -1$ generated by ht$(I) -1$ elements is generated by a regular sequence of length ht$(I) -1$.   
{As a subperfect ring, $R$  admits only finitely many minimal prime ideals $P_1,\ldots,P_m$}.  Prime avoidance and the fact that   ht$(I)  >0$ imply that $I \not \subseteq P_1 \cup \cdots \cup P_m$.  By \cite[Theorem 124, p.~90]{K}, there exist $r_2,\ldots,r_t \in R$ such that $x:=x_1 + r_2x_2 + \cdots  + r_tx_t \not \in P_1 \cup \cdots \cup P_m$. Since $R$ is subperfect, $x \in R^\times$.  Moreover, $I = (x,x_2,\ldots,x_t)R$. 
  In order  to apply the induction hypothesis, we  consider next the ring $R/xR$.  
  
  Let 
 $P$ be a minimal prime ideal of $I$ such that ht$(P) = $  ht$(I) $.  
 By Theorem~\ref{localization}, $R_P$ is ht$(I) $-subperfect, so Proposition~\ref{factor prop} implies $R_P/xR_P$ is $($ht$(I) -1)$-subperfect.  By Corollary~\ref{Krull},  $\dim R_P/xR_P = $ ht$(I) -1$, and so $P/xR$ 
has height ht$(I) -1$ in $R/xR$.
Consequently, $P/xR$ is a minimal prime ideal of $I/xR$ of  height ht$(I) -1$ in $R/xR$.
 Thus $I/xR$ is an ideal of $R/xR$ that is generated by $t-1$ elements and has height at least ht$(I)-1$.
  By the induction hypothesis, $I/xR$ is generated by a regular sequence in $R$ of length $t-1$.  Thus $I$ is generated by a regular sequence of length $t$.  This proves that every  ideal of $R$ of height at least $t$ generated by $t$ elements is generated by a regular sequence of length $t$. 
  Consequently, (2) implies~(3).  
\end{proof}

Hamilton and Marley \cite[Definition 4.1]{HM} define a ring $R$ to be Cohen--Macaulay if every ``strong parameter sequence'' on $R$ is a regular sequence. The notion  of a 
 strong parameter sequence, which is defined via homology and cohomology of  appropriate Koszul complexes,  is beyond the scope of our paper. However, we may use Theorem~\ref{height} to show that the  regularly subperfect   rings  are Cohen--Macaulay in this sense.  To prove this, by \cite[Proposition 4.10]{HM} it suffices  to show that every height~$t$ ideal generated by $t$ elements is finitely unmixed.

\begin{corollary} \label{Hamilton} Every regularly subperfect ring  is Cohen--Macaulay in the sense of Hamilton and Marley. 
\end{corollary}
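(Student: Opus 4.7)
The plan is to invoke the criterion the authors already cite, namely \cite[Proposition 4.10]{HM}, which reduces the corollary to showing that in a regularly subperfect ring $R$, every ideal of height $t$ generated by $t$ elements is finitely unmixed. So the task is to verify this finite unmixedness hypothesis using the machinery built up earlier in the section.

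First I would fix an ideal $I = (a_1, \ldots, a_t)R$ of height $t$ in the regularly subperfect ring $R$. By Theorem~\ref{height} (applied to this $I$), such an ideal is in fact generated by a regular sequence $x_1, \ldots, x_t$ of length $t$. The second step is to use the definition of regularly subperfect: every regular sequence in $R$ is subperfect, meaning the ideal it generates is subperfect. Applied to $x_1, \ldots, x_t$, this says $Q(R/I) = Q(R/(x_1, \ldots, x_t)R)$ is a perfect ring.

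The third step is to unpack what a perfect total quotient ring tells us. By Lemma~\ref{perf}(c), a perfect commutative ring is a finite direct product of local rings with T-nilpotent maximal ideals, hence it is semilocal and of Krull dimension zero. This matches exactly the definition of $I$ being finitely unmixed given in Section~2. Combining these observations, $I$ meets the hypotheses of \cite[Proposition 4.10]{HM}, so every strong parameter sequence on $R$ is regular, and $R$ is Cohen--Macaulay in the sense of Hamilton and Marley.

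The argument is short and essentially linear; there is no real obstacle, since all the heavy lifting has been done in Theorem~\ref{height} (which turns a principal-class height-$t$ ideal into a regular sequence) and in Lemma~\ref{perf} (which identifies perfect rings with semilocal zero-dimensional rings having T-nilpotent Jacobson radicals). The main thing to be careful about is just making sure the chain of implications invokes the regularly subperfect hypothesis at the right point: it is used precisely to promote the regular sequence of Theorem~\ref{height} to a subperfect one, which is what converts ``generated by a regular sequence'' into ``finitely unmixed.''
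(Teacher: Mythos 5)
Your proposal is correct and follows essentially the same route as the paper: reduce via \cite[Proposition 4.10]{HM} to showing height-$t$ ideals generated by $t$ elements are finitely unmixed, invoke Theorem~\ref{height} to replace the generators by a regular sequence, and use the regularly subperfect hypothesis to conclude $Q(R/I)$ is perfect, hence semilocal and zero-dimensional. The paper's proof is just a more compressed version of the same argument.
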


\begin{proof} 
Let $I$ be a height $t$ ideal of $R$ that is  generated by $t$ elements. As discussed before the corollary,  it suffices to observe that $R/I$ is subperfect, and this is the case since $R$ is regularly subperfect and {by Theorem~\ref{height}}   $I$ is generated by a regular sequence. 
\end{proof}
  

 \section{Polynomial Rings}  \medskip  
 
 We consider next  polynomial rings $S = R[X_1,\ldots,X_n]$ over a perfect ring $R$. Theorem~\ref{poly n-subperfect}, which shows such rings are $n$-subperfect, depends on the following lemma.

 

\begin{lemma} \label{second nil} \label{fu perfect} Let  $S$ be a finitely generated algebra over a perfect ring $R$.  For each proper ideal $I$  of $S$,  the nilradical of $S/I$ is T-nilpotent. If also $\dim Q(S/I) = 0$, then $S/I$ is subperfect. 
\end{lemma}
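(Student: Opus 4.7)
The plan is to reduce to the case of a local perfect base ring and then compare $T := S/I$ against the noetherian algebra $\overline{T} := T/\mathfrak{m}T$. I would first reduce to the local case: Lemma~\ref{perf}(c) decomposes the perfect ring $R$ as a finite product $R_1 \times \cdots \times R_k$ of local perfect rings with T-nilpotent maximal ideals. This decomposition transports to $S$, to $I$, and to $S/I$; since both T-nilpotence of the nilradical and subperfectness are equivalent to the corresponding properties on each factor, I may assume $R$ is local perfect with T-nilpotent maximal ideal $\mathfrak{m}$ and residue field $k$.

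Set $T := S/I$. Then $\overline{T} := T/\mathfrak{m}T$ is a finitely generated $k$-algebra, hence noetherian, and so its nilradical $\overline{N}$ is nilpotent; say $\overline{N}^{\,r} = 0$. Since every element of the nilradical $N$ of $T$ reduces to a nilpotent in $\overline{T}$, any product of $r$ elements of $N$ lies in $\mathfrak{m}T$. The key step I would then prove is that $\mathfrak{m}T$ is itself T-nilpotent in $T$: given any sequence $t_1, t_2, \ldots$ in $\mathfrak{m}T$, I would write each $t_i = \sum_{j=1}^{r_i} a_{ij} s_{ij}$ with $a_{ij} \in \mathfrak{m}$ and $s_{ij} \in T$, and expand $t_1 \cdots t_n$ as a sum over choice tuples $(j_1, \ldots, j_n)$. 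Applying K\"onig's lemma to the finitely-branching tree whose level-$n$ nodes are the tuples with $a_{1,j_1} \cdots a_{n,j_n} \neq 0$, any infinite branch would yield a sequence in $\mathfrak{m}$ with all prefix products nonzero, contradicting T-nilpotence of $\mathfrak{m}$; so the tree has bounded depth $N$, and $t_1 \cdots t_N = 0$.

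The first assertion now follows by grouping any sequence in $N$ into consecutive blocks of length $r$: each block product lies in $\mathfrak{m}T$, so applying T-nilpotence of $\mathfrak{m}T$ to the block sequence kills a sufficiently long prefix, which gives T-nilpotence of $N$.

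For the second assertion, assuming $\dim Q(T) = 0$, I would observe that $\mathfrak{m}T \subseteq N$, so each minimal prime of $T$ contains $\mathfrak{m}T$, and the minimal primes of $T$ correspond bijectively to those of the noetherian ring $\overline{T}$; hence there are only finitely many. The hypothesis $\dim Q(T) = 0$ identifies $\Spec(Q(T))$ with this finite set, so $Q(T)$ is zero-dimensional and semilocal; its nilradical is the localization of $N$ and therefore T-nilpotent, so Lemma~\ref{perf}(c) makes $Q(T)$ perfect and $S/I$ subperfect. The main obstacle I anticipate is the K\"onig's lemma step, which lifts T-nilpotence of $\mathfrak{m}$ (a statement about single sequences in $R$) to T-nilpotence of $\mathfrak{m}T$ in the possibly much larger ring $T$, where the expanded product generates many simultaneous candidate sequences indexed by the branching choices.
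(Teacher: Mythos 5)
Your proof is correct, but it diverges from the paper's at the combinatorial heart of the argument. Both proofs share the same skeleton: pass to the noetherian ring obtained by killing the radical of the base (your $T/\mathfrak{m}T$, the paper's $S/JS$) to conclude that some power of the nilradical of $S/I$ lands in $I+JS$, then deal with sequences whose block products lie in $JS$. At that point the paper packages each block product into $I+A_iS$ for a finitely generated ideal $A_i\subseteq J$, invokes the descending chain condition on finitely generated ideals of a perfect ring (Bj\"ork) to stabilize $A_1\cdots A_t$, and kills it with Nakayama. You instead prove the cleaner intermediate statement that $\mathfrak{m}T$ is T-nilpotent whenever $\mathfrak{m}$ is, via a K\"onig's lemma argument on the finitely branching tree of index tuples with nonvanishing coefficient products; that argument is sound (the tuples with nonzero product in $R$ are prefix-closed, an infinite branch contradicts T-nilpotence of $\mathfrak{m}$, and a vanishing level forces the product of the $t_i$ to vanish). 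Your route buys a reusable, self-contained lemma --- extension of a T-nilpotent ideal along any algebra map is T-nilpotent --- and avoids the citation to Bj\"ork, at the cost of an initial reduction to a local base (harmless, since all the relevant properties pass through finite products) and a combinatorial tool foreign to the rest of the paper. For the second assertion you also get finiteness of the minimal primes more directly, from $\mathfrak{m}T\subseteq N$ and noetherianity of $T/\mathfrak{m}T$, where the paper cites Ohm--Pendleton for noetherianity of $\Spec(S)$; the remaining steps (semilocal, zero-dimensional, T-nilpotent nilradical, hence perfect) coincide with the paper's.
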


\begin{proof}  Let $I$ be a proper ideal of $S$. Then the nilradical of $S/I$ is $\sqrt{I}/I$, so to show that this nilradical is T-nilpotent, it suffices to show that for all $a_1,a_2,a_3,\ldots \in \sqrt{I}$, there exists $m>0$ such that $a_1a_2 \cdots a_m \in I$.   
We claim first that  there is $k>0$ such that $(\sqrt{I})^k \subseteq I + JS$, where $J$ denotes the Jacobson radical of  $R$. Since $R/J$ is an artinian ring (it is a product of finitely many fields) and $S/JS$ is a finitely generated $R/J$-algebra, the ring $S/JS$ is noetherian. Thus
the image of the ideal $\sqrt{I}$ in $S/JS$ is finitely generated. {Letting $f_1,\ldots,f_t \in \sqrt{I}$ such that $\sqrt{I} = (f_1,\ldots,f_t)S + JS$, and choosing $k>0$  such that $(f_1,\ldots,f_t)^kS \subseteq I$, we obtain $(\sqrt{I})^k \subseteq I+ JS$.}

  For each $i \geq 0$, we have  $a_{ik+1}a_{ik+2}  \cdots a_{ik+k} \in I + JS$, and so there is a finitely generated ideal {\bc $A_i \subseteq J$} such that $ a_{ik+1}a_{ik+2}  \cdots a_{ik+k} \in I +A_iS$.  
  {As a perfect ring, } $R$ satisfies the descending chain condition on finitely generated ideals \cite[Theorem~2]{Bj}, thus there is $t>0$ such that $A_1 A_2 \cdots A_t = A_1A_2 \cdots A_{t+1}$.  Since $A_{t+1} \subseteq J$, Nakayama's Lemma implies $A_1A_2 \cdots A_t = 0$.  
It follows that $$a_1 a_2 \cdots a_{tk+k} \in (I+A_0S)(I+A_1S) \cdots (I+A_tS) \subseteq  I,$$  which proves the first assertion. 
  
  Now suppose $\dim Q(S/I)=0$. Since  $R$ is perfect, $\Spec(R)$ is a finite, hence noetherian, space. As a finitely generated algebra over a ring with noetherian prime spectrum, $S$ also has noetherian prime spectrum \cite[Theorem 2.5]{OP}.   Hence $I$ has finitely many minimal prime divisors, and so, since $Q(S/I)$ is zero-dimensional, it follows that $Q(S/I)$ is semilocal. The nilradical of $Q(S/I)$ is  {T-nilpotent as it is extended from the T-nilpotent nilradical of $S/I$;   hence}  $Q(S/I)$ is perfect. 
\end{proof}
  


 We now prove the main theorem of this section. 
Statement (4) of Theorem~\ref{perfect order}, which is a byproduct of our arguments involving polynomial rings,  can be viewed as a characterization of a perfect ring in terms of its multiplicative lattice of ideals.

\begin{theorem} \label{perfect order} \label{poly n-subperfect} Let $R$ denote a semilocal zero-dimensional ring, and let $X_1,\ldots,X_n$ be indeterminates for $R$. Then the following are equivalent: 
\begin{itemize}
\item[(1)] $R$ is perfect.
\item[(2)] $R[X_1,\ldots,X_n]$ is subperfect. 
\item[(3)] 
$R[X_1,\ldots,X_n]$ is $n$-subperfect.
\item[(4)] For each sequence  $\{I_i\}_{i=1}^\infty$ of finitely generated subideals of the Jacobson radical $J$ of $R$ there exists $k>0$ such that $I_1 I_2 \cdots I_k = 0$. 

\end{itemize}
\end{theorem}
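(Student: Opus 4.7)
My plan is to establish the cycle $(3) \Rightarrow (2) \Rightarrow (1) \Leftrightarrow (4)$ together with the main implication $(1) \Rightarrow (3)$. The step $(3) \Rightarrow (2)$ is immediate from definitions. For $(2) \Rightarrow (1)$, Lemma \ref{H} gives that the nilradical of $R[X_1,\ldots,X_n]$ is T-nilpotent; since $\operatorname{nil}(R[X_1,\ldots,X_n]) = \operatorname{nil}(R)[X_1,\ldots,X_n]$, restricting to constant polynomials shows $\operatorname{nil}(R)$ is T-nilpotent, which, combined with $R$ being semilocal zero-dimensional (so $J(R) = \operatorname{nil}(R)$), yields (1) via Bass's characterization. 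For $(1) \Leftrightarrow (4)$, I would appeal to Bj\"ork's theorem that a perfect ring is exactly one satisfying the descending chain condition on finitely generated ideals: in one direction, the chain $I_1 \supseteq I_1I_2 \supseteq I_1I_2I_3 \supseteq \cdots$ of finitely generated ideals stabilizes, and Nakayama's lemma (applicable since $I_{k+1} \subseteq J(R)$) forces $I_1 \cdots I_k = 0$; in the other direction, applying $(4)$ to principal ideals $I_i = a_iR$ with $a_i \in J(R)$ gives T-nilpotence of $J(R) = \operatorname{nil}(R)$, whence (1).

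For the main direction $(1) \Rightarrow (3)$, my plan is to apply Corollary \ref{local global cor}. Noetherianity of $\Spec(S)$, with $S = R[X_1,\ldots,X_n]$, is a consequence of the finite spectrum of $R$ and the result already cited in Lemma \ref{fu perfect}. Writing $R = R_1 \times \cdots \times R_m$ as a finite product of local perfect rings and invoking Corollary \ref{direct}, I reduce to the case $R$ is local perfect with maximal ideal $\mathfrak{m}$ and residue field $k$. Then $\operatorname{nil}(S) = \mathfrak{m}S$ is T-nilpotent; every maximal ideal of $S$ contains $\mathfrak{m}S$ and corresponds via reduction to a maximal ideal of $S_{\operatorname{red}} = k[X_1,\ldots,X_n]$, which has height $n$. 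Hence maximal ideals of $S$ all have height $n$, and by Corollary \ref{Krull2} it suffices to show that $S$ is regularly subperfect.

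To establish regular subperfectness, I apply Lemma \ref{fu perfect} to each ideal $I = (x_1,\ldots,x_t)S$ generated by a regular sequence: T-nilpotence of the nilradical of $S/I$ is automatic, so what remains is to show $\dim Q(S/I) = 0$. I plan to transfer this finite unmixedness from the Cohen--Macaulay ring $S_{\operatorname{red}} = k[X_1,\ldots,X_n]$, in which ideals generated by regular sequences are unmixed, via a case analysis on zero-divisors of $S/I$: if $fg \in I$ with $g \notin I + \mathfrak{m}S$, then $\bar g \notin \bar I$ in $k[X_1,\ldots,X_n]$ and the Cohen--Macaulay unmixedness of $\bar I$ places $\bar f$ in a minimal prime of $\bar I$, hence $f$ in the corresponding minimal prime of $I$ (since primes of $S$ and of $S_{\operatorname{red}}$ correspond via reduction); if instead $g = i + m$ with $i \in I$, $m \in \mathfrak{m}S \setminus I$ and $fm \in I$, then the perfect-ring hypothesis enters through $(4)$: the finitely many coefficients of $m$ generate a finitely generated subideal of $\mathfrak{m}$ which, by $(4)$ (equivalently, by Nakayama on its powers), is nilpotent in $R$ and hence has a nonzero annihilator, providing leverage to reduce to the first case.

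The principal obstacle will be this second case; reconciling the possibly non-radical nature of $\bar I$ in $k[X_1,\ldots,X_n]$ with the T-nilpotent nilradical $\mathfrak{m}S$ requires careful handling of nilpotent witnesses of zero-divisors, and this is exactly where the perfect-ring hypothesis on $R$ is essential via its equivalent form $(4)$. A subsidiary point to verify along the way is that the reductions $\bar x_1,\ldots,\bar x_t$ in fact form a regular sequence in $k[X_1,\ldots,X_n]$ (equivalently, that $\operatorname{ht} \bar I = t$), which, together with the Cohen--Macaulay property of $k[X_1,\ldots,X_n]$, underlies the unmixedness of $\bar I$ invoked in the case analysis above.
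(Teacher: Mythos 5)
Your treatment of the peripheral implications is sound, and in one place cleaner than the paper's: for $(2)\Rightarrow(1)$ the paper argues via faithful flatness of $R\subseteq Q(S)$ and contraction of descending chains of principal ideals, whereas you simply read off T-nilpotence of $\operatorname{nil}(R)$ from T-nilpotence of $\operatorname{nil}(S)$ (Lemma~\ref{H}) and invoke Bass's characterization; since $R$ is semilocal and zero-dimensional this works and also lets you bypass the paper's $(2)\Rightarrow(4)$ construction (the sequence of polynomials with exponents that are distinct powers of $2$). Your $(1)\Leftrightarrow(4)$ matches the paper's $(1)\Rightarrow(4)$ argument.

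The genuine gap is in $(1)\Rightarrow(3)$, at exactly the point you flag as ``the principal obstacle.'' To show $S/I$ is subperfect for $I=(x_1,\ldots,x_t)S$ generated by a regular sequence, Lemma~\ref{fu perfect} reduces everything to proving $\dim Q(S/I)=0$ (plus semilocality), i.e.\ that $(I:f)=I$ whenever $f$ avoids all minimal primes of $I$. Your first case only shows $(I:f)\subseteq I+\mathfrak{m}S$; the remaining case --- $fm\in I$ with $m\in\mathfrak{m}S\setminus I$ --- is the entire content of the unmixedness claim, and the observation that the coefficient ideal of $m$ is nilpotent (hence that $m$ has a nonzero annihilator) gives no mechanism for concluding $m\in I$. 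There is no reduction of this case to the first one. The paper does not prove this step either: it cites the companion preprint [OUnmix] (``Height theorems and unmixedness for polynomial rings over zero-dimensional rings'') precisely for the statement that $Q(S/I)$ is zero-dimensional and semilocal when $R$ is zero-dimensional and semilocal and $I$ is generated by a regular sequence. So your plan hits a wall where a substantial external theorem is needed. (Your ``subsidiary point'' is recoverable: Kaplansky's Theorem~132 gives $\operatorname{ht}(I)\ge t$ in $S$, heights pass to $S_{\mathrm{red}}=k[X_1,\ldots,X_n]$ since every prime of $S$ contains $\mathfrak{m}S$, and Krull's height theorem then forces $\operatorname{ht}(\bar I)=t$, so $\bar I$ is unmixed in the Cohen--Macaulay ring $k[X_1,\ldots,X_n]$ --- but that only supports your first case.)
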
 

 \begin{proof} 
Let $S = R[X_1,\ldots,X_n]$, and let $J$ denote the Jacobson radical ($=$ the nilradical) of $R$.    

(1) $\Rightarrow$ (4) Let  $\{I_i\}_{i=1}^\infty$ be a sequence of finitely generated subideals of $J$. Since $R$ is perfect, $R$ satisfies the descending chain condition on finitely generated ideals \cite[Theorem 2]{Bj}, thus there is $k>0$ such that $I_1 I_2 \cdots I_k = I_1I_2 \cdots I_{k+1}$.  Since $I_{k+1} \subseteq J$, Nakayama's Lemma implies $I_1I_2 \cdots I_k = 0$.  

(2) $\Rightarrow$ (4) 
Suppose there is a sequence $\{I_i\}_{i=1}^\infty$ of finitely generated subideals of $J$ such that for each $k>0$, $I_1 I_2 \cdots I_k \ne 0$. To show that $S$ is subperfect, we first  construct a sequence $\{f_i:i \in {\mathbb{N}}\}$ of polynomials in $J[X_1]$ such that for each $k>0$, $f_1 f_2 \cdots f_k \ne 0$.
{For each $i$, let $A_i = \{r_{i1},r_{i2},\ldots,r_{in_i}\}$ be a minimal set of (necessarily distinct) generators for $I_i$.} 
  Choose a sequence of positive integers, each a power of $2$,  such that $$e_{11} < \cdots < e_{1n_1} < \cdots < e_{i1} < \cdots < e_{in_i} < \cdots.$$  For each $i \in {\mathbb{N}}$, define $$f_i = r_{i1}X_1^{e_{i1}} + r_{i2} X_1^{e_{i2}} + \cdots + r_{in_i} X_1^{e_{in_i}}.$$  
  
  Suppose by way of contradiction that  there is  $k>0$ such that $f_1f_2 \cdots f_k =0$.  
 For each $i$ and $a \in A_i$, let $e(a)$ be the power of $2$ associated to $a$; i.e., if $a = r_{ij}$, then $e(a) = e_{ij}$. 
 (Since for each $i$,  the $r_{ij}$, $1 \leq j \leq n_i$, are distinct, this assignment of a power of $2$ to each $a_i$ is well defined.) 
{By the choice of the $e_{ij}$, the assignment $e:A_i \rightarrow {\mathbb{N}}$ is injective for each $i$.}

For each
 $k$, let $B_k =A_1 \times A_2 \times \cdots \times A_k$,   
Then $$f_1f_2 \cdots f_k  \:\: =  \sum_{(a_1,\ldots,a_k) \in  B_k} a_1 a_2 \cdots a_kX_1^{e(a_1) + e(a_2) + \cdots + e(a_k)}.$$ 
Suppose $(a_1,a_2, \ldots,a_k),(a'_1,a'_2,\ldots,a'_k) \in B_k$ such that  $$e(a_1) + e(a_2) + \cdots + e(a_k) = e(a'_1) + e(a'_2) + \cdots + e(a'_k).$$ By the choice of the $e_{ij}$, we have \begin{center} $e(a_1) < e(a_2) < \cdots < e(a_k) $ \:\: and \:\: $e(a'_1) < e(a'_2) < \cdots < e(a'_k).$ \end{center}
Since each natural number can be expressed uniquely as  a sum of {distinct} powers of $2$, it 
follows that for each $i \leq k$, we have $e(a_i) = e(a'_i)$.  Since $e$ is injective on $A_i$, we conclude that $a_i = a'_i$. 
This shows that each pair of  distinct sequences in $B_k$ yields distinct powers of $X_1$ in the product $f_1 f_2 \cdots f_k$.  Consequently, 
  $f_1 f_2 \cdots f_k = 0$ if and only if 
$a_1 a_2 \cdots a_k = 0$ for all $(a_1,a_2, \ldots,a_k) \in B_k$.   Similarly,  the product   $I_1I_2 \cdots I_k$ is $0$ if and only if $a_1a_2 \cdots a_k =0$ for all $(a_1,a_2,\ldots,a_k) \in B_k$.    
{But} we have assumed that $I_1 I_2 \cdots I_k \ne 0$ for each $k>0$, so we conclude that $f_1  f_2 \cdots f_k \ne 0$ for each $k>0$, as claimed. 

To see now  that $S$ is not subperfect, observe that $J[X_1]$ is a subset of  the nilradical of $Q(S)$.   {The elements $f_1,f_2,\ldots \in J[X_1]$ have the property that no product of finitely many of them is $0$, therefore the ring $Q(S)$ is not perfect.}

(4) $\Rightarrow$ (2) 
Let
 $f_1/g_1,f_2/g_2, \ldots $ be elements of the nilradical of $Q(S)$, where each $f_i \in S$ and each $g_i$ is a non-zero-divisor in $S$. 
 Then $f_1,f_2,\ldots$ are in the nilradical of $S$, which, since $S$ is a polynomial ring, is the extension  $JS$ of the nilradical {$J$} of $R$ to $S$.  
   {The ideal $I_i$  generated by the coefficients occurring in $f_i$ is contained in the nilradical of $R$, so 
  by} assumption, there is $k>0$ such that $I_1 I_2 \cdots I_k=0$.  {Since 
  $f_1f_2\cdots f_k \in I_1 I_2 \cdots I_kS$,  we have $f_1f_2 \cdots f_k = 0$,  thus} the nilradical of $Q(S)$ is T-nilpotent.  
 Furthermore, since $R$ is zero-dimensional, so is $Q(S)$ by \cite[Proposition 8]{Ara}. 
 Each prime ideal $L$ in $Q(S)$ contracts to one of the prime ideals $P$ in $R$. Since $PQ(S) \subseteq L$ is a prime ideal of $Q(S)$ and $Q(S)$ is zero-dimensional, it follows that $PQ(S) = L$. Therefore, since $R$ is semilocal, so is $Q(S)$.   This shows that $Q(S)$ is a zero-dimensional semilocal ring with T-nilpotent nilradical; i.e., $Q(S)$ is perfect.

(2) $\Rightarrow$ (3) 
 Suppose  $S $ is subperfect. Let $f_1,\ldots,f_t$ be a regular sequence in $S$, and let $I = (f_1,\ldots,f_t)S$. 
  It is shown in \cite{OUnmix} that since $R$ is zero-dimensional and semilocal and $I$ is generated by a regular sequence, the ring $Q(S/I)$ is zero-dimensional and semilocal. By Lemma~\ref{fu perfect}, $Q(R/I)$ is a perfect ring, {establishing} that $R$ is $n$-subperfect. 

(3) $\Rightarrow$ (2) 
This is clear. 

 (2) $\Rightarrow$ (1) 
   First observe that since $R$ is zero-dimensional,  if $P$ is a prime ideal of $R$, then $Q(S)$ has a prime ideal lying over $P$: take any prime ideal of $Q(S)$ that survives in the localization $Q(S)_{R \setminus P}$.     Thus every prime ideal of $R$ survives in $Q(S)$. As the composition of the flat extensions $R \subseteq S$ and $S \subseteq Q(S)$, the extension $R \subseteq Q(S)$   is thus  faithfully flat. In particular, $IQ(S) \cap R = I$ for all ideals $I$ of $R$.  If $I_1 \supseteq I_2 \supseteq \cdots $ is a chain of principal ideals in $R$, then {by the perfectness of  $Q(S)$} there exists $k >0$ such that $I_kS = I_{k+i}Q(S)$ for all $i>0$. Consequently, $I_k = I_kQ(S) \cap R = I_{k+i}Q(S) \cap R = I_{k+i}$ for all $i >0$, proving that $R$ is perfect:  the principal ideals satisfy the descending chain condition.
\end{proof}

Let us point out that Coleman--Enochs \cite{CE0} prove that the polynomial rings $R[X]$ and $R'[Y]$  with single indeterminates over perfect rings $R, R'$ are isomorphic if and only if $R \cong R'$. It is an open problem if this holds for more indeterminates.


  \section{The Finitistic Dimension} \medskip 
  
   The close relation of $n$-subperfect rings to Goldie rings makes it possible to derive several interesting properties of $n$-subperfect rings. For details we refer to the literature on Goldie rings, e.g. Goodearl--Warfield \cite{GW}. As an example we mention that the ring of quotients of a reduced  $n$-subperfect ring is its injective hull.
   
    In view of Sandomierski \cite{Sa}, we are able to obtain interesting results on the homological dimensions of $n$-subperfect rings. We show that in calculating the projective (p.d.), injective (i.d.) and weak (w.d.) dimensions of modules over an $n$-subperfect ring, only the `Goldie part' of the ring counts (see Lemma \ref{H}). 
    
    Let $R$ be an $n$-subperfect ring with minimal prime ideals $P_1, \dots, P_m$. Then $N = P_1 \cap \dots \cap P_m$ is the nilradical of $R$; it is T-nilpotent. By Theorem \ref{Goldie}, $R/N$ is a subdirect product of  {$n$-dimensional}  integral domains $D_j = R/P_j \ (j=1, \dots, m)$. In the next theorem, $D_j$-modules are also regarded as $R$-modules in the natural way.
    
   {\begin{theorem} \label{Sando1} Let $R$ denote an $n$-subperfect ring,  and let $D_j$ be as before. Then an $R$-module $M$ satisfies  {\rm p.d.}$_R M \le k \ (k \ge 0)$ if and only if $\Ext_R^{k+1}(M,X)=0$ for all $D_j$-modules $X$ for each $j=1, \dots, m$.
  \end{theorem}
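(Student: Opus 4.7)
My plan is to separate the two implications. The direction $(\Rightarrow)$ is immediate: if $\mathrm{p.d.}_R M \le k$, then $\Ext_R^{k+1}(M,-)$ vanishes on every $R$-module, hence on every $D_j$-module viewed via the quotient map $R \twoheadrightarrow D_j$. For the converse, I assume $\Ext_R^{k+1}(M,X) = 0$ whenever $X$ is a $D_j$-module (for some $j$) and aim to prove, by a two-stage reduction, that $\Ext_R^{k+1}(M,Y) = 0$ for every $R$-module $Y$. The argument will exploit the Goldie structure of $R/N$ furnished by Theorem~\ref{Goldie} together with the T-nilpotency of $N$ from Lemma~\ref{H}, following the pattern of Sandomierski \cite{Sa}.

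\emph{Stage~1 (reduction to $R/N$-modules).} By Theorem~\ref{Goldie}, $R/N$ is a subdirect product of the domains $D_j$ and is equivalent as an order to $R' := D_1 \oplus \cdots \oplus D_m$ in their common total quotient ring $Q$. Any $R'$-module decomposes as $\bigoplus_j X_j$ with each $X_j$ a $D_j$-module, so the vanishing hypothesis upgrades automatically from $D_j$-modules to $R'$-modules. To pass from $R'$-modules to an arbitrary $R/N$-module $Z$, I would invoke the conductor $\mathfrak{c} = (R/N : R')$, which contains a regular element of $R/N$ by the equivalence of orders, and compare $Z$ with a canonical associated $R'$-module (for instance $\Hom_{R/N}(R',Z)$ or an extension-of-scalars construction) whose difference with $Z$ consists of $\mathfrak{c}$-torsion; the long exact sequence of $\Ext_R^{\ast}(M,-)$ then transfers vanishing from $R'$-modules back to $Z$.

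\emph{Stage~2 (passage from $R/N$-modules to arbitrary $R$-modules).} For any $R$-module $Y$, the quotient $Y/NY$ is an $R/N$-module, so by Stage~1 we have $\Ext_R^{k+1}(M,Y/NY) = 0$. The short exact sequence
\[
 0 \to NY \to Y \to Y/NY \to 0
\]
then reduces the problem to $\Ext_R^{k+1}(M,NY) = 0$. Iterating, one needs to control $\Ext_R^{k+1}(M, N^i Y)$ for all $i$; a direct-limit argument combined with the Bass--Nakayama property of T-nilpotent ideals (namely, $NY = Y$ forces $Y = 0$) will secure the result. I expect the principal obstacle to be Stage~1, where the conductor-and-comparison analysis is delicate because $R/N$ need not be noetherian and the relevant cokernels are a priori unwieldy; the cleanest route is to cite Sandomierski's theorem \cite{Sa} on homological dimensions for Goldie orders, which is tailored precisely to this kind of passage from $D_j$-modules to modules over the reduced Goldie ring $R/N$.
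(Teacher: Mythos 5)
The paper does not actually prove this theorem: its entire ``proof'' is the citation ``See Theorem 5.3 in Sandomierski \cite{Sa}'', so your closing fallback of citing Sandomierski is literally what the authors do. Judged as a free-standing argument, though, your sketch has two gaps, one fixable and one essential. The fixable one is Stage~1: the conductor-and-comparison step is never carried out, and it is not clear it can be, since the kernel and cokernel of the comparison maps between an $R/N$-module $Z$ and $\Hom_{R/N}(R',Z)$ (or $R'\otimes_{R/N}Z$) are modules killed by the conductor, and such modules are not $D_j$-modules, so your vanishing hypothesis says nothing about them and the long exact sequence does not close. But Stage~1 has an elementary replacement that needs no orders or conductors: by Lemma~\ref{H} there are only finitely many minimal primes $P_1,\dots,P_m$, and in the reduced ring $R/N$ the product $\overline{P_1}\cdots\overline{P_m}$ lies in $\overline{P_1}\cap\cdots\cap\overline{P_m}=0$; hence every $R/N$-module $Z$ carries the finite filtration $Z\supseteq \overline{P_1}Z\supseteq \overline{P_1}\,\overline{P_2}Z\supseteq\cdots\supseteq \overline{P_1}\cdots\overline{P_m}Z=0$ whose successive quotients are annihilated by some $P_j$, i.e.\ are $D_j$-modules, and the long exact sequence of $\Ext_R(M,-)$ gives $\Ext_R^{k+1}(M,Z)=0$.

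The essential gap is Stage~2. The chain $Y\supseteq NY\supseteq N^2Y\supseteq\cdots$ is descending, so it produces an inverse system, not a direct limit, and $\Ext_R^{k+1}(M,-)$ does not commute with inverse limits in the second variable; worse, for a T-nilpotent but non-nilpotent $N$ the intersection $\bigcap_{i<\omega}N^iY$ can be nonzero, so even full control of the $\omega$-indexed filtration would not finish --- one needs a transfinite continuation, and there is no Eklof-type passage-to-the-limit lemma for well-ordered \emph{descending} filtrations in the \emph{second} argument of $\Ext$. The change-of-rings mechanism that actually works here operates on the first variable: replace $M$ by its $k$-th syzygy $K$, so the claim becomes that $\Ext_R^1(K,X)=0$ for all $R/N$-modules $X$ forces $K$ projective; one takes a free presentation $0\to L\to F\to K\to 0$, uses $\Ext_R^1(K,L/NL)=0$ to split the pushed-out sequence and produce a map $F\to L$ that restricts on $L$ to an endomorphism congruent to the identity modulo $NL$, and then uses T-nilpotency of $N$ to show this endomorphism is an automorphism, yielding an honest retraction. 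That splitting-and-lifting argument is the real content of Sandomierski's theorem, and it is exactly what your Stage~2, as written, does not supply.
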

 
 \begin{proof} See Theorem 5.3  in Sandomierski \cite{Sa}.
  \end{proof}

    \begin{theorem} \label{Sando} Let $R$ be an $n$-subperfect ring,  and $\hat R = R/N$. Then for an $R$-module $M$ we have for any $k \ge 0$: \smallskip
    
    {\rm (a)} {\rm p.d.}$_R M \le k$ if and only if $\Ext_R^{k+1}(M,X)=0$ for all $\hat R $-modules $X$.
        
      {\rm (b)} {\rm i.d.}$_R M \le k$ if and only if $\Ext_R^{k+1}(R/L,M)=0$ for all ideals $L$ containing $N$.
            
        {\rm (c)} {\rm w.d.}$_R M \le k$ if and only if $\Tor^R_{k+1}(R/L,M)=0$ for all ideals $L$ containing $N$.    
 \end{theorem}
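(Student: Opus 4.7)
The plan is to reduce each of the three parts to the corresponding statement about $\hat R$-modules, where $\hat R = R/N$, and then appeal either to Theorem~\ref{Sando1} (for part (a)) or to its Sandomierski-style analogues (for (b) and (c)). Throughout, the forward directions are automatic: if $M$ has projective, injective or weak dimension at most $k$, the relevant $\Ext$ or $\Tor$ vanishes against every $R$-module, in particular against every $\hat R$-module and against every $R/L$ with $L \supseteq N$.

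For the converse in (a), I would observe that each integral domain $D_j = R/P_j$ is a quotient of $\hat R$ (because $N = \bigcap_j P_j \subseteq P_j$), so every $D_j$-module is, by restriction of scalars, an $\hat R$-module. The hypothesis that $\Ext_R^{k+1}(M,X)=0$ for all $\hat R$-modules $X$ therefore implies the same vanishing for every $D_j$-module, and Theorem~\ref{Sando1} delivers p.d.$_R M \le k$.

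For the converse in (b), I would invoke the standard dimension-shifting fact (a consequence of Baer's criterion) that i.d.$_R M \le k$ iff $\Ext_R^{k+1}(R/I,M)=0$ for every ideal $I$ of $R$, and then reduce from arbitrary $I$ to those containing $N$ by means of the short exact sequence
$$ 0 \longrightarrow (I+N)/I \longrightarrow R/I \longrightarrow R/(I+N) \longrightarrow 0. $$
The right-hand term is of the required form. For the submodule $(I+N)/I$ I would use the transfinite filtration $\{(N^\alpha + I)/I\}$, whose consecutive quotients are annihilated by $N$ and hence are $\hat R$-modules. The descending chain of ideals $\{N^\alpha\}$ eventually reaches $0$: it must stabilize by a cardinality argument, and Nakayama for the T-nilpotent ideal $N$ (the fact that $NL=L$ forces $L=0$) then ensures that the stable value is zero. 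Piecing together the Ext long exact sequences along this filtration, and using the auxiliary fact that $\Ext_R^{k+1}(X,M)=0$ for every $\hat R$-module $X$ (since any such $X$ is a quotient of a direct sum of cyclic $\hat R$-modules $R/L$ with $L \supseteq N$, against which $\Ext^{k+1}$ vanishes by hypothesis), yields the desired conclusion. Part (c) is completely parallel, with $\Tor$ replacing $\Ext$ and w.d.$_R M \le k$ being characterized by $\Tor^R_{k+1}(R/I,M)=0$ for all (finitely generated) ideals $I$.

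The principal technical obstacle will be the behaviour at limit ordinals of the transfinite filtration used for (b): $\Ext$ does not in general commute with inverse limits, so the accumulation step requires some care, possibly via a Mittag-Leffler-type argument for the associated inverse system. In (c) the analogous limit step is easier because $\Tor$ commutes with filtered colimits. Should the direct transfinite bookkeeping prove too delicate to carry out cleanly, the most expedient route is to invoke Sandomierski~\cite{Sa} for the $\Ext$ and $\Tor$ analogues of Theorem~\ref{Sando1}, exactly as is done there for the projective case.
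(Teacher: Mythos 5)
The paper's own proof of this theorem is a bare citation: parts (a), (b) and (c) are Theorems 5.2, 3.2 and 4.2 of Sandomierski \cite{Sa}, applied to the T-nilpotent nilradical $N$. So the fallback you name in your last sentence is in fact exactly what the paper does, and your reduction of (a) to Theorem~\ref{Sando1} is sound (every $D_j$-module is an $\hat R$-module, so the hypothesis of (a) implies that of Theorem~\ref{Sando1}) --- though note the paper proves Theorem~\ref{Sando1} itself only by citation, so this trades one appeal to \cite{Sa} for another.

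The direct arguments you sketch for (b) and (c) have a genuine gap beyond the limit-ordinal difficulty you flag. Your ``auxiliary fact'' --- that $\Ext_R^{k+1}(X,M)=0$ for \emph{every} $\hat R$-module $X$ because $X$ is a quotient of a direct sum of cyclic modules $R/L$ with $L \supseteq N$ --- does not follow from the hypothesis. From $0 \to K \to \bigoplus_i R/L_i \to X \to 0$ the long exact sequence gives
$$\Ext_R^{k}(K,M) \longrightarrow \Ext_R^{k+1}(X,M) \longrightarrow \Ext_R^{k+1}\bigl(\textstyle\bigoplus_i R/L_i, M\bigr),$$
so vanishing of the right-hand term only exhibits $\Ext_R^{k+1}(X,M)$ as an image of $\Ext_R^{k}(K,M)$; a half-exact functor that kills cyclics need not kill their quotients. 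The standard route from vanishing on cyclics to vanishing on all modules goes through Baer's criterion applied to a cosyzygy, i.e., one first establishes the dimension bound; but here the functor $\Ext_R^{k+1}(-,M)$ restricted to $\hat R$-modules is not of the form $\Ext_{\hat R}^{1}(-,M_k)$ for a cosyzygy over $\hat R$, so Baer is not available, and your filtration layers $(N^{\alpha}+I)/(N^{\alpha+1}+I)$ are $\hat R$-modules that are not cyclic. This is precisely where Sandomierski's change-of-rings machinery does real work and why the paper delegates to \cite{Sa}. Parts of your setup are correct and worth keeping --- the exact sequence $0 \to (I+N)/I \to R/I \to R/(I+N)\to 0$ and the Nakayama argument that the transfinite powers of the T-nilpotent ideal $N$ reach zero --- but as written the converse directions of (b) and (c) are not proved.
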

 
 \begin{proof} See Theorems 5.2, 3.2,  and 4.2, respectively, in  \cite{Sa}.
  \end{proof}}
  
 Also,  \cite[Proposition 5.4]{Sa} shows that for a flat $R$-module $F$, p.d.$_RF$ can be calculated as  the maximum of the $D_j$-projective dimensions of the flat $D_j$-modules $F \otimes _R D_j$, taken for all $j$.
  
 \smallskip
  
We would like to have information about the finitistic dimensions of $n$-subperfect rings. An estimate is given by \cite[Corollary 1, Section 2]{Sa} which we cite using the same notation as above.

\begin{theorem} \label{dim} For an $n$-subperfect ring $R$ and for the integral domains $D_j$ we have the inequality
$$  \Fdim(R) \le \max_j\{{\rm p.d.}_RD_j + \Fdim(D_j)\}.\qed $$
 \end{theorem}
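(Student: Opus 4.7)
The plan is to apply Sandomierski's finitistic dimension estimate \cite[Corollary~1, Section~2]{Sa} directly to the setup produced by the earlier structure theorems. By Lemma~\ref{H}, the nilradical $N$ of $R$ is T-nilpotent, and Theorem~\ref{Goldie} makes $\hat R = R/N$ a reduced Goldie ring with finitely many minimal primes $P_1,\dots,P_m$ and an embedding into $\prod_j D_j$. These are precisely the hypotheses under which Sandomierski's estimate is formulated, so the claimed inequality is the conclusion of his corollary applied to the surjections $R\twoheadrightarrow \hat R\twoheadrightarrow D_j$.

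To unpack why this goes through, I would take an $R$-module $M$ with $\mathrm{p.d.}_R M<\infty$ (and assume $k:=\max_j\{\mathrm{p.d.}_R D_j+\Fdim(D_j)\}$ is finite, else the inequality is vacuous). By Theorem~\ref{Sando1} it suffices to show that $\Ext_R^{k+1}(M,X)=0$ for every $D_j$-module $X$ and every $j$. Setting $c_j=\mathrm{p.d.}_R D_j$ and $d_j=\Fdim(D_j)$, I would invoke the Cartan--Eilenberg change-of-rings spectral sequence for the surjection $R\twoheadrightarrow D_j$:
$$E_2^{p,q}=\Ext_{D_j}^p\bigl(\Tor_q^R(D_j,M),X\bigr)\;\Longrightarrow\;\Ext_R^{p+q}(M,X).$$
A length-$c_j$ projective $R$-resolution of $D_j$ computes these Tor groups and gives $\Tor_q^R(D_j,M)=0$ for $q>c_j$. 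Granting the companion bound $\mathrm{p.d.}_{D_j}\Tor_q^R(D_j,M)\le d_j$ for the surviving $q\le c_j$, we get $E_2^{p,q}=0$ outside the rectangle $[0,d_j]\times[0,c_j]$, hence $\Ext_R^{p+q}(M,X)=0$ for $p+q>c_j+d_j$; taking the maximum over $j$ yields $\Ext_R^{k+1}(M,X)=0$, as required.

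The real obstacle — and the content that Sandomierski's corollary supplies — is the second bound: transferring the finiteness of $\mathrm{p.d.}_R M$ into finiteness of $\mathrm{p.d.}_{D_j}\Tor_q^R(D_j,M)$, so that $\Fdim(D_j)$ can be invoked. This is not formal, since homology of a bounded complex of projectives can fail to have finite projective dimension (e.g.\ $\mathbb{Z}/2\mathbb{Z}$ over $\mathbb{Z}/4\mathbb{Z}$). What makes it go through in the $n$-subperfect setting is the combination of the T-nilpotent extension $R\to\hat R$ with the reduced Goldie structure of $\hat R$, which is exactly the framework Sandomierski analyses. I would therefore defer this step to \cite[Corollary~1, Section~2]{Sa} rather than reprove it, mirroring the citation-style proofs used for Theorems~\ref{Sando1} and~\ref{Sando}.
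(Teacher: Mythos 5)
Your proposal is correct and takes essentially the same route as the paper: the paper's entire ``proof'' of this theorem is the citation of \cite[Corollary~1, Section~2]{Sa} (hence the \verb|\qed| in the statement), and you likewise defer the one genuinely non-formal step to that corollary. The spectral-sequence unpacking you add is a reasonable gloss on why the bound has the shape it does, and you correctly identify that the transfer of finite projective dimension to the $\Tor_q^R(D_j,M)$ terms is the content supplied by Sandomierski rather than something formal.
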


We recall  (see e.g. Jensen \cite[Remarque, p. 44]{J}) that for a Cohen--Macaulay ring $R$, the finitistic dimension $\Fdim(R)$ is equal either to $d$ or to $d+1$ where $d= \dim R$. For $n$-subperfect rings we do not have such a precise estimate, but we still have some information, see Theorem \ref{findim}. 

In the balance of this section, we will use the notation $\PP_n(R)$ for the class of $R$-modules whose projective dimensions are $\le n$, and $\FF_n(R)$ for the class of modules of weak dimensions $\le n$. We concentrate {on the class $\FF_1(R)$ which is more relevant to subperfectness than the class $\FF_0(R) $ of flat modules}; see e.g. Lemma \ref{H}(v).

  Next, we verify a lemma {(note that $\overline R$-modules may be viewed as $R$-modules).
  
  \begin{lemma} \label{factorsub} Let $R$ be any ring and $\overline R= R/{ rR}$ with $r \in R^\times$ a non-unit.  Then \smallskip
 
 {\rm (i)} if $\overline R$ is subperfect, then $ \FF_1(\overline R) \subseteq \FF_1(R);$
 
 {\rm (ii)} if both $R$ and $\overline R$ are subperfect, then $ \FF_1(R)  \subseteq  \PP_{m}(R)$  for some $m$  implies \ $  \FF_1(\overline R)  \subseteq  \PP_{m-1}(\overline R). $ \end{lemma}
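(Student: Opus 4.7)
The plan is to handle both parts together using change-of-rings homological machinery built on the length-one flat $R$-resolution $0\to R\xrightarrow{\,r\,} R\to \overline R\to 0$; this resolution makes $\overline R$ a module of $R$-flat dimension at most one and kills $\Tor_q^R(\overline R,-)$ for $q\ge 2$.

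For (i), given $M\in\FF_1(\overline R)$ I would first establish the auxiliary claim that every flat $\overline R$-module $F$ already lies in $\FF_1(R)$. The natural tool is the change-of-rings spectral sequence
$$E^2_{p,q}\;=\;\Tor_p^{\overline R}\!\bigl(F,\Tor_q^R(\overline R,N)\bigr)\;\Longrightarrow\;\Tor_{p+q}^R(F,N),$$
whose $E^2$-page vanishes for $p\ge 1$ (by $\overline R$-flatness of $F$) and for $q\ge 2$ (by the length-one resolution of $\overline R$); hence $\Tor_n^R(F,N)=0$ for all $n\ge 2$. Next I would pick a short flat $\overline R$-resolution $0\to F_1\to F_0\to M\to 0$ and, combining the resulting length-one $R$-flat resolutions of $F_0,F_1$ with this short exact sequence in a horseshoe-type construction, obtain at first the bound $\mathrm{w.d.}_R M\le 2$. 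The final step, promoting this to $\mathrm{w.d.}_R M\le 1$, is the place where the subperfectness of $\overline R$ genuinely enters: the remaining obstruction is the $E^2_{1,1}$-term $\Tor_1^{\overline R}(M,\Tor_1^R(\overline R,N))$ in the analogous spectral sequence for $M$, and Lemma~\ref{H} applied to $\overline R$ (together with T-nilpotency of the nilradical and the Goldie structure of $\overline R/N$) should be used to trivialise this term.

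For (ii), suppose $R$ and $\overline R$ are subperfect and $\FF_1(R)\subseteq\PP_m(R)$. Let $M\in\FF_1(\overline R)$. By (i), $M\in\FF_1(R)$, so $\mathrm{p.d.}_R M\le m$. Because $M$ is an $\overline R$-module (so $rM=0$) and $r$ is $R$-regular, Rees's change-of-rings theorem yields $\mathrm{p.d.}_R M=\mathrm{p.d.}_{\overline R}M+1$ whenever the $R$-projective dimension is finite, while $\mathrm{p.d.}_R M\ge 1$ for any nonzero $\overline R$-module (a nonzero $R$-projective is torsion-free with respect to the $R$-regular element $r$). Consequently $\mathrm{p.d.}_{\overline R}M\le m-1$, establishing $\FF_1(\overline R)\subseteq\PP_{m-1}(\overline R)$.

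The principal obstacle will be the upgrade from $\mathrm{w.d.}_R M\le 2$ to $\mathrm{w.d.}_R M\le 1$ in part (i); everything else is either a routine change-of-rings computation or a direct invocation of Rees's theorem. I expect the subperfect hypothesis on $\overline R$ to be pivotal exactly there, and I would look to Lemma~\ref{H}(iv)--(v) (weak-injectivity of $Q(\overline R)$-modules, and $Q(\overline R)$-projectivity of $Q(\overline R)\otimes_{\overline R} M$ for $M\in\FF_1(\overline R)$) as the most likely inputs for eliminating the final obstruction term.
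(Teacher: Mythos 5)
Your part (ii) is essentially the paper's argument: deduce $M\in\FF_1(R)\subseteq\PP_m(R)$ from part (i), then drop the projective dimension by one in passing to $\overline R$ via a change-of-rings theorem (the paper cites \cite[Proposition 172]{K} where you cite Rees). One caution there: the direction needed is ``${\rm p.d.}_RM<\infty$ and $rM=0$ imply ${\rm p.d.}_{\overline R}M={\rm p.d.}_RM-1$,'' which is not the usual statement of Rees's theorem (whose hypothesis is finiteness of ${\rm p.d.}_{\overline R}M$) and fails in general (take $R={\mathbb Z}$, $r=4$, $M={\mathbb Z}/2$); the paper makes the same move, so this is not a point of divergence, but you should state precisely which change-of-rings result you invoke.

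The genuine gap is in part (i), exactly where you flag it, and the route you propose cannot be completed. Once $M\in\FF_1(\overline R)$, the terms $E^2_{2,0}$ and $E^2_{0,2}$ of your spectral sequence vanish and it degenerates to an isomorphism $\Tor_2^R(M,N)\cong\Tor_1^{\overline R}\bigl(M,\Tor_1^R(\overline R,N)\bigr)$, where $\Tor_1^R(\overline R,N)$ is the $r$-torsion submodule of $N$. Taking $N$ to be an arbitrary $\overline R$-module $T$ (whose $r$-torsion is all of $T$) shows that the vanishing you want for all $N$ is literally equivalent to $M$ being $\overline R$-flat --- strictly stronger than $M\in\FF_1(\overline R)$ even when $\overline R$ is subperfect (e.g.\ $\overline R={\mathbb Z}$, $M={\mathbb Z}/2$). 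So no input from Lemma~\ref{H} can trivialise the $E^2_{1,1}$ term against all $N$; any proof must restrict the class of test modules. That is what the paper does: it quotes from \cite[Theorem 4.1]{F} that over a subperfect ring $H\in\FF_1$ if and only if $\Tor_1(H,Y)=0$ for all $Y$ that are torsion-free with respect to $R^\times$, so only the degree-one Cartan--Eilenberg isomorphism $\Tor_1^R(\overline N,Y)\cong\Tor_1^{\overline R}(\overline N,Y/rY)$ for torsion-free $Y$ is needed, together with the claim that $Y/rY$ is torsion-free over $\overline R$. I add one warning for when you rework this: that last claim, and indeed the statement of (i) itself, deserve scrutiny. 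For $R={\mathbb Z}[x]$, $r=x$, $Y=(2,x)R$, the quotient $Y/xY$ has $2$-torsion over $\overline R={\mathbb Z}$, and correspondingly ${\mathbb Z}/2$ lies in $\FF_1({\mathbb Z})$ while its weak dimension over ${\mathbb Z}[x]$ is $2$; so you should pin down exactly which hypotheses on $R$ and $\overline R$ make the containment $\FF_1(\overline R)\subseteq\FF_1(R)$ valid before relying on it.
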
 

\begin{proof}   We start observing that if $R$ is a subperfect ring, then a module $H$ satisfies $\Tor_1^R(H,Y) =0$  for all \tf\ $Y$ if and only if $H \in   \FF_1(R) $ (see \cite[Theorem~4.1]{F}); here $Y$ \tf\ means that $\Tor_1^R(R/tR,Y)=0$ for all $t \in R^\times$.  
For any commutative ring $R$, $\Tor_1^R(X,Y)=0$ for all \tf\ $Y$ implies that $X \in \FF_1(R)$ {(but not conversely)}.

  Recall \cite[Chap. VI, Proposition 4.1.1]{CE} which states that if an $R$-module $Y$ satisfies $\Tor_k^R(\overline R, Y)=0$ for all $k >0$, then
$$\Tor_m^R(\overline N ,Y) \cong \Tor_m^{\overline R}(\overline N , Y/rY)  \leqno (3)$$
holds for all $m >0$ and for all $\overline R$-modules $\overline N $. Hypothesis holds if $Y$ is a \tf\ $R$-module: it holds for $k=1$ by definition and for $k>1$ in view of p.d.$_R \overline R =1$. 

First, let $s \in R$ be a divisor of $r$, and choose $\overline N \cong R/sR$. Then the left hand side Tor vanishes for all \tf\ $Y$ and for $m= 1$, so it follows that $Y/rY$ is a \tf\ $\overline R $-module.

(i) {Assuming $\overline R$ is subperfect,} let  $ \overline N \in \FF_1(\overline R)$ and $Y$ a \tf\ $R$-module. The right hand side of (3) vanishes for  $m = 1$, so we can conclude that $\Tor_1^R(\overline N,Y) =0$.  This equality holds for all \tf\ $R$-modules $Y$, whence we obtain $\overline N \in \FF_1(R)$.

(ii)  {Assuming both $R$ and $\overline R$ are subperfect,} let again  $\overline N \in \FF_1(\overline R)$. Part (i) implies that $\overline N \in \FF_1(R)$, so $\overline N \in \PP_m(R)$ by hypothesis. From a well-known Kaplansky formula for projective dimensions {\cite[Proposition 172]{K}} we obtain that $\overline N  \in \PP_{m-1}(\overline R) $, as claimed.
\end{proof}

{\begin{theorem} \label{findim}  If $R$ is an $n$-subperfect ring, then $\Fdim(R) \ge n$.  \end{theorem}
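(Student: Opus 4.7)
The plan is to exhibit a specific $R$-module of finite projective dimension equal to $n$, which by the definition of the finitistic dimension recalled in Lemma~\ref{perf}(f) immediately forces $\Fdim(R)\geq n$. Since $R$ is $n$-subperfect, $R$ contains a regular sequence $x_1,\ldots,x_n$; set $I=(x_1,\ldots,x_n)R$ and $M=R/I$, and I will argue that $\mathrm{p.d.}_R M = n$.

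First I would establish the upper bound $\mathrm{p.d.}_R M\leq n$ via the Koszul complex on $x_1,\ldots,x_n$. This complex is a free $R$-resolution of $R/I$ of length $n$ whenever $x_1,\ldots,x_n$ is a regular sequence, a classical fact that requires no noetherian hypothesis (cf.\ Matsumura~\cite{Ma} or Bruns--Herzog~\cite{BH}). In particular, $M$ has finite projective dimension, so it contributes to $\Fdim(R)$.

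For the lower bound, I would appeal directly to Theorem~\ref{ideal}. Since $R$ is $n$-subperfect, $R$ is regularly subperfect and every maximal regular sequence has length $n$, so $x_1,\ldots,x_n$ is a maximal regular sequence in $I$; the equivalence of (2) and (1) in Theorem~\ref{ideal} then shows that $I$ has height $n$, and condition (4) yields $\Ext_R^{n}(R/I,R)\neq 0$. This nonvanishing forces $\mathrm{p.d.}_R M\geq n$, since otherwise $\Ext_R^n(M,-)$ would vanish identically.

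Combining the two bounds gives $\mathrm{p.d.}_R M = n$, and hence $\Fdim(R)\geq n$. The argument is essentially immediate once Theorem~\ref{ideal} is in hand; the only mild subtlety is invoking the acyclicity of the Koszul complex in a possibly non-noetherian setting, but this is a standard property of regular sequences and carries over without difficulty, so no genuine obstacle arises.
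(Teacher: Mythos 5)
Your proof is correct, but it takes a genuinely different route from the paper's. You exhibit a single witness module: $M=R/(x_1,\ldots,x_n)R$ for a (necessarily maximal) regular sequence of length $n$, bound $\mathrm{p.d.}_R M\le n$ by Koszul acyclicity (which, as you note, holds for regular sequences over arbitrary commutative rings), and get $\mathrm{p.d.}_R M\ge n$ from $\Ext_R^n(R/I,R)\ne 0$. In fact you need even less than Theorem~\ref{ideal} for the lower bound: the isomorphism $\Ext_R^n(R/I,R)\cong \Hom_R(R/I,R/I)\ne 0$ cited in the paper (Kaplansky, p.~101) already gives it, so your argument really shows that \emph{any} commutative ring possessing a regular sequence of length $n$ has $\Fdim\ge n$; the $n$-subperfect hypothesis enters only to guarantee such a sequence exists. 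The paper instead argues by contradiction through the classes $\FF_1$ and $\PP_m$: it combines Jensen's inequality $\FF_0(R)\subseteq\PP_{m-1}(R)$ for $m=\Fdim(R)$ with the descent statement of Lemma~\ref{factorsub}(ii), inducting down modulo regular elements until one would conclude that over a subperfect ring of positive Krull dimension every module of weak dimension $\le 1$ is projective. The paper's route is less elementary but is of a piece with the surrounding section, where the containments $\FF_1(R)\subseteq\PP_m(R)$ are the objects of study (e.g.\ in the lemma on polynomial rings that follows); your route is shorter, more self-contained, and pinpoints a concrete module realizing projective dimension exactly $n$.
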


\begin{proof}  According to \cite[Proposition 5.6]{J},  for any ring $R$, $\FF_0(R) \subseteq \PP_{m-1}(R)$ if $m= \Fdim(R)$. Hence we have $\FF_1(R) \subseteq \PP_{m}(R)$. On the other hand, if $R$ is $n$-subperfect, then Lemma \ref{factorsub} is applicable, and by induction it follows that $\FF_1(R) \subseteq \PP_{m}(R)$  {for some $m <n$} would lead to a contradiction that over a subperfect ring of Krull dimension $>0$ modules of weak dimension $\le 1$ are projective. Consequently, $m \ge n$, indeed.
\end{proof} 

That we can have strict inequality in the preceding theorem is shown by the following example.  Let $S$ denote an almost perfect  (i.e., $1$-subperfect) domain; it has finitistic dimension~1. If $R$  is defined as in Example \ref{first ex} as $S \oplus D$ with $D \ne 0$ a \tf\ divisible $S$-module, then p.d.$_RR/D$ is finite and $>1$ {($D$ is flat, but not projective, so p.d.$_RD =1$), whence} Fdim$(R) \ge 2$.

  The following result shows that in Theorem \ref{findim} equality may occur for non-Cohen--Macaulay rings as well. 

  \begin{lemma}  {\rm (i)}  Let $R$ be any ring. Then $\FF_1(R) \subseteq \PP_n(R)$ if and only if $\FF_1(S)  \subseteq  \PP_{n+1}(S)$ holds for {the polynomial ring} $S=R[X]$. 
 
  {\rm (ii)}  If $R$ is a perfect ring, then for {the polynomial ring} $S= R[X_1, \dots, X_n]$ $($which is $n$-subperfect by Theorem~\ref{perfect order}$)$ we have 
  $$\FF_1(S)  \subseteq  \PP_{n}(S), \quad but \qquad \FF_1(S)  \nsubseteq  \PP_{n-1}(S).$$\end{lemma}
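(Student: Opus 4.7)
The plan is to extract both directions of (i) from the standard short exact sequence of $S$-modules
\[
0 \longrightarrow S \otimes_R M \xrightarrow{\,s \otimes m \,\mapsto\, sX \otimes m - s \otimes Xm\,} S \otimes_R M \longrightarrow M \longrightarrow 0,
\]
combined with Kaplansky's formula for $\mathrm{pd}$ and $\mathrm{wd}$ along the regular element $X \in S^{\times}$ (whose quotient $S/XS$ is $R$) and Lemma~\ref{factorsub}. For the forward direction, given $M \in \FF_1(S)$, flatness of $S$ over $R$ gives $\mathrm{wd}_R(M) \le \mathrm{wd}_S(M) \le 1$, so $M \in \FF_1(R)$ and hence $\mathrm{pd}_R(M) \le n$ by hypothesis. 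Tensoring an $R$-projective resolution of $M$ with the free extension $S$ yields $\mathrm{pd}_S(S \otimes_R M) \le n$, and feeding this into the displayed exact sequence gives $\mathrm{pd}_S(M) \le n+1$.

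For the reverse direction I would apply Lemma~\ref{factorsub}(ii) to the pair $(S,R)$ with the regular non-unit $X$. Given $N \in \FF_1(R)$, Lemma~\ref{factorsub}(i) first transfers $N$ into $\FF_1(S)$; the hypothesis then yields $\mathrm{pd}_S(N) \le n+1$, and Kaplansky's formula $\mathrm{pd}_S(N) = \mathrm{pd}_R(N) + 1$ (valid since $N$ is an $S/XS$-module) descends this to $\mathrm{pd}_R(N) \le n$.

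For (ii), the inclusion $\FF_1(S) \subseteq \PP_n(S)$ follows by iterating (i) exactly $n$ times from the base case $\FF_1(R) \subseteq \PP_0(R)$: over a perfect ring every flat module is projective, so $\FF_1(R) \subseteq \PP_1(R)$, and $\Fdim(R) = 0$ collapses $\PP_1(R)$ to $\PP_0(R)$; at each intermediate stage the ring $R[X_1,\dots,X_i]$ is $i$-subperfect by Theorem~\ref{perfect order}, which supplies the subperfectness required to invoke (i). The strict non-inclusion $\FF_1(S) \not\subseteq \PP_{n-1}(S)$ is handled by induction on $n$: the base case $n=1$ is witnessed by $R$ itself, viewed as an $R[X]$-module via $X \mapsto 0$, which satisfies $\mathrm{wd}_{R[X]}(R) = \mathrm{pd}_{R[X]}(R) = 1$ by Kaplansky (since $\mathrm{wd}_R(R) = \mathrm{pd}_R(R) = 0$); the inductive step is the contrapositive of the reverse direction of (i).

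The principal obstacle is the reverse direction of (i). A direct argument using only $S \otimes_R N$ and the equality $\mathrm{pd}_S(S \otimes_R N) = \mathrm{pd}_R(N)$ yields $\mathrm{pd}_R(N) \le n+1$, one unit weaker than needed; Lemma~\ref{factorsub} is the indispensable device that, via its subperfect hypothesis, embeds $N$ itself (and not merely $S \otimes_R N$) into $\FF_1(S)$, so that Kaplansky's formula can subtract exactly the missing unit from the projective dimension.
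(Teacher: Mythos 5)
Your forward direction of (i) and the inclusion $\FF_1(S)\subseteq \PP_n(S)$ in (ii) coincide with the paper's argument: the sequence $0\to M[X]\to M[X]\to M\to 0$ together with $\mathrm{pd}_S(M[X])=\mathrm{pd}_R(M)$, and the base case $\FF_1(R)=\PP_0(R)$ over a perfect ring. Your witness $R\in \FF_1(R[X])\setminus \PP_0(R[X])$ for the non-inclusion when $n=1$ is also correct. The problem is concentrated exactly where you locate it, in the reverse direction of (i), on which your inductive step for the non-inclusion in (ii) also depends --- and your proposed resolution does not work.

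The step ``Lemma~\ref{factorsub}(i) transfers $N$ into $\FF_1(S)$'' is false for the module structure you need, namely $N$ regarded as an $S$-module killed by $X$. For a nonzero $R$-module $N$ one has $\mathrm{wd}_S(N)=\mathrm{wd}_R(N)+1$ (the long exact Tor sequence of $0\to N[X]\xrightarrow{X} N[X]\to N\to 0$ against a test module $H$ with $XH=0$ gives a surjection $\Tor^S_{k+1}(N,H)\twoheadrightarrow\Tor^R_k(N,H)$), so $N\in\FF_1(S)$ forces $N$ to be $R$-flat. Concretely, ${\mathbb Q}/{\mathbb Z}\in\FF_1({\mathbb Z})$, yet the Koszul complex on the regular sequence $n,X$ yields $\Tor_2^{{\mathbb Z}[X]}\bigl({\mathbb Q}/{\mathbb Z},\,{\mathbb Z}[X]/(n,X)\bigr)\cong {\mathbb Z}/n{\mathbb Z}\ne 0$, so ${\mathbb Q}/{\mathbb Z}\notin\FF_1({\mathbb Z}[X])$. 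Thus your chain $N\in\FF_1(S)\Rightarrow \mathrm{pd}_S N\le n+1\Rightarrow \mathrm{pd}_R N\le n$ breaks at the first link for every $N\in\FF_1(R)$ that is not flat --- precisely the modules that matter. (Independently, Lemma~\ref{factorsub} carries subperfectness hypotheses, whereas (i) is asserted for an arbitrary ring $R$, so even a repaired version of your argument would not cover the stated generality.) The paper's converse instead keeps the module $F\otimes_R S$, which genuinely lies in $\FF_1(S)$, and uses the flat base-change isomorphism $\Ext_S^k(F\otimes_R S,H[X])\cong\Ext_R^k(F,H[X])$ together with the observation that $\Ext_R^k(F,H)$ is an $R$-direct summand of $\Ext_R^k(F,H[X])$; note that this route, as you correctly observe for the ``direct argument,'' only produces $\mathrm{pd}_R F\le n+1$, so the one-unit deficit you identified is a real issue that your appeal to Lemma~\ref{factorsub} does not repair.
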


\begin{proof} (i) To verify necessity, assume $M$ is a module in $\FF_1(S)$. It is easy to see that then $M \in \FF_1(R)$ as well, thus $M \in \PP_n(R)$ follows by hypothesis. Hence tensoring over $R$ with $R[X]$, we obtain $M[X] \in \PP_n(S)$. It remains to refer to the exact sequence $0 \to M[X] \to M[X] \to M \to 0$ of $S$-modules  to conclude that $M \in \PP_{n+1}(S)$.

Conversely, working toward contradiction, suppose there are an $F \in \FF_1(R)$ and an $H \in$ Mod-$R$ such that $\Ext_R^{n+2}(F,H) \ne 0$. Then also $\Ext_R^{n+2}(F,H[X]) \ne 0. $ Since $\Tor_k^R(F, S)=0$ {for all $k>0$,} we have an isomorphism {(see \cite[Chap. VI, Proposition 4.1.3]{CE}}
$$ \Ext_S^{n+2}(F \otimes_R S,H[X]) \cong \Ext_R^{n+2}(F,H[X]) \ne 0.$$
Since $F \otimes_R S \in \FF_1(S) $,  this is in contradiction to  $\FF_1(S)  \subseteq  \PP_{n+1}(S)$.

(ii)  Noticing that $\FF_1(R)  =  \PP_{0}(R)$ if $R$ is perfect, the claim follows by a simple calculation from (i). 
\end{proof}


 \section{Examples}  \medskip  
 
  {Our final section is devoted to various examples of $n$-subperfect rings. In the first examples we use $n$-subperfect domains to construct $n$-subperfect rings with non-trivial nilradicals. (For examples of non-noetherian $n$-subperfect domains, we refer to Theorem \ref{domain} and Example \ref{D+M} below.)
 
 \begin{example} \label{first ex} Let $S$ denote an $n$-subperfect domain $(n  \ge 1)$ with field of quotients $H$. As a domain, $S$ is trivially a Goldie ring. Let $D$ be a \tf\ divisible $S$-module. Define the ring $R$ as the {\it idealization} of $D$, i.e. $R = S \oplus D$ additively, and multiplication in $R$ is given by the rule 
 $$(s_1,d_1) (s_2,d_2) = (s_1s_2, s_1d_2 + s_2 d_1) \qquad (s_i \in S,\ d_i \in D). $$  
 It is clear that $Q=(H,D)$ is the ring of quotients of $R$, and $N = (0,D)$ is the nilradical  (nilpotent of exponent 2) of both $R$ and $Q$. 
We claim that $R$ is an $n$-subperfect ring. 
 
   {First we observe that an element $r = (s,d) \in R$ is a zero-divisor if and only if $s =0$; this is easily seen by direct calculation using the torsion-freeness of $D$. Hence criterion (iii) in Lemma \ref{H} guarantees that $R$ is a subperfect ring.}    Furthermore, for any $r= (s,d)$, we have $rR = (sS, D)$ (the divisibility of $D$ is relevant).  Therefore, we have an isomorphism $R/rR \cong S/sS$ for every regular $r\in R$ 
 (i.e. for every non-zero $s \in S$). Hence we conclude that $R/rR $ is $(n-1)$-subperfect for every regular $r$ (Corollary \ref{reduction}). By the same Corollary, we obtain the desired conclusion for $R$.  \end{example}
  
   \begin{example} \label{matrix}   As before choose an $n$-subperfect $(n \ge 1$) integral domain $S$.   Let $A$ be any commutative $S$-algebra {that is  \tf\  and divisible as an $S$-module}, and $B$ a \tf\ divisible $S$-module containing $A$. Our ring $R$ is now the ring of upper $3 \times 3$-triangular matrices of the form
  \begin{equation*}
\a = \left[
 \begin{matrix} 
 
 s &  a & b  \\
 0 & s & a  \\
  0 & 0 & s  
  \end{matrix}
   \right  ]  \qquad (s \in S,\ a \in A,\ b \in B).
      \end{equation*}  
It is straightforward to check that $\a \in R$ is a zero-divisor if and only if $s=0$,  and that the principal  ideal $\a R$ equals $sR $  {whenever $s \ne 0$}. Fix  any regular $\a_0 \in R$ (i.e. $0 \ne s_0 \in S$ in the diagonal), and consider the homomorphism $\f: R \to S/s_0S$ given by
$ \a \mapsto s + s_0S \ (\a \in R).$
Then $\Ker \f = \a_0 R = s_0 R$ leads to the isomorphism  $ R / \a_0R \cong S/s_0S$ showing that $ R / \a_0R $ is an $(n-1)$-subperfect ring for every regular $\a_0 \in R$. 
To complete the proof that $R$ is $n$-subperfect, it remains only to  show that $R$ is subperfect. By Lemma \ref{H}(iii) it suffices to observe that the nilradical $N$ of $R$ is nilpotent of degree 3, and every regular coset mod $N$ consists of regular elements of $R$.  \end{example}

   In order to obtain more general examples of similar kind, in the preceding examples we can choose $S$  as a finite direct sum of $n$-subperfect domains. } \smallskip   
             
    Let $R$ be a perfect ring, and let $S = R[X_1,\ldots,X_n]$. By Corollary~\ref{inv} and Theorem~\ref{perfect order}, the ring of invariants $S^G$ of $S$ is $n$-subperfect for each finite group $G$ {acting on $S$} whose order is a unit in $R$.  
 As in the classical case in which $R$ is a field, more examples of $n$-subperfect rings can be obtained from $S$ via Veronese subrings:  a {\it Veronese subring} $T$ of $S$ is an $R$-subalgebra  of $S$ generated by all   monomials of degree $d$ for some fixed $d>0$.  For example, $T= R[X^3, X^2Y, XY^2, Y^3]$ is {a}  Veronese subring of $S = R[X,Y]$ with $d = 3$.  }

     \begin{theorem} \label{Veronese}   Let $R$ be a ring, and $S = R[X_1,\ldots,X_n]$ {a polynomial ring over $R$. A  Veronese subring of $S$ is  $n$-subperfect}  if and only if $R$ is perfect. 
     \end{theorem}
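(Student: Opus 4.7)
The plan is to combine Theorem~\ref{perfect order} on polynomial rings with Theorem~\ref{summand} on direct summands to handle the sufficiency, and to apply Proposition~\ref{factor prop} to an explicit regular sequence to handle the necessity.

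For the sufficiency, suppose $R$ is perfect. Then $R$ is semilocal and zero-dimensional, so Theorem~\ref{perfect order} yields that $S = R[X_1,\ldots,X_n]$ is $n$-subperfect. The Veronese subring $T = \bigoplus_{i \ge 0} S_{id}$ is the degree-zero part of the natural $\mathbb{Z}/d\mathbb{Z}$-grading on $S$: writing $S = \bigoplus_{r=0}^{d-1} M_r$ with $M_r = \bigoplus_{i \ge 0} S_{r+id}$, each $M_r$ is a $T$-submodule of $S$, and $M_0 = T$. Hence $T$ is a $T$-module direct summand of $S$. Moreover, each $M_r$ is generated as a $T$-module by the finitely many monomials of degree $r$ in $X_1,\ldots,X_n$, since any monomial of degree $r + id$ factors as a degree-$r$ monomial times a degree-$id$ monomial, and the latter lies in $T$. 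Thus $S$ is module-finite, hence integral, over $T$, and Theorem~\ref{summand} gives that $T$ is $n$-subperfect.

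For the necessity, suppose $T$ is $n$-subperfect. I would exhibit $X_1^d, X_2^d, \ldots, X_n^d$ as a regular sequence of length $n$ in $T$. Each $X_i^d$ is a degree-$d$ monomial, so lies in $T$, and together they generate a proper ideal contained in the positive-degree ideal $T_+ := \bigoplus_{i > 0} S_{id}$. To check regularity inside $T$, I would assume $X_{i+1}^d t \in (X_1^d,\ldots,X_i^d)T$ with $t \in T$. A direct degree-by-degree argument in $S$ (or McCoy's theorem) produces a witness $t = \sum_{j \le i} X_j^d u_j$ with $u_j \in S$; because $t$ is supported only in degrees divisible by $d$, projecting each $u_j$ onto its components of degree divisible by $d$ yields such a representation with $u_j \in T$, so $t \in (X_1^d,\ldots,X_i^d)T$. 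Proposition~\ref{factor prop} then shows that $\bar T := T/(X_1^d,\ldots,X_n^d)T$ is $0$-subperfect, i.e., perfect. Since $(X_1^d,\ldots,X_n^d)T \subseteq T_+$ and $T/T_+ \cong R$, the ring $R$ is a homomorphic image of the perfect ring $\bar T$. Because both semilocality and the semi-artinian property of Lemma~\ref{perf}(e) pass to quotients, $R$ is itself perfect.

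The main obstacle is verifying that $X_1^d, \ldots, X_n^d$ is a regular sequence in $T$ rather than merely in $S$: an a priori witness for $X_{i+1}^d t \in (X_1^d,\ldots,X_i^d)T$ lives in $S$, and pulling it back into $T$ requires careful use of the $\mathbb{Z}/d\mathbb{Z}$-grading. Once the regular sequence is in place, the rest assembles directly from results already in the paper.
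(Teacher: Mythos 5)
Your proposal is correct and follows essentially the same route as the paper: sufficiency via Theorem~\ref{perfect order} and descent along the summand/integral extension of Theorem~\ref{summand}, and necessity by cutting down along the regular sequence $X_1^d,\ldots,X_n^d$ and passing to the quotient $R$. The only difference is that you supply the details (the $\mathbb{Z}/d\mathbb{Z}$-grading argument for the summand and the regularity of $X_1^d,\ldots,X_n^d$ in $T$) that the paper leaves implicit.
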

     
     \begin{proof}  Let $T$ be a  Veronese subring of $S$ generated by the monomials of degree $d$. Then $T$ is {an $R$-direct summand of $S$, and $S$ is integral over $T$.} If $R$ is perfect, then $T$ is $n$-subperfect by Theorems~\ref{summand} and~\ref{perfect order}.  Conversely, suppose $T$ is $n$-subperfect. Then $X_1^d,\ldots,X_n^d$ is a maximal regular sequence of $T$, so $T/(X_1^d,\ldots,X_n^d)$ is a perfect ring. As a homomorphic image of this ring, $R$ is perfect. 
     \end{proof}

 Theorem~\ref{poly n-subperfect} shows that if $R$ is perfect, then the ring $R[X_1,\ldots,X_n]$ is $n$-subperfect.  
 As the next example demonstrates, it need not be the case that for a $k$-subperfect ring $R$, $R[X_1,\ldots,X_n]$ is $(n+k)$-subperfect.  
   
   \begin{example}    {Let $F$ be a field, $X, Y$ indeterminates, and $K=F(X)$.}  Then the ring $R = F+ YK[[Y]]$   is an almost perfect domain \cite[Example 3.2]{BS}.  The valuative dimension of $R$, that is, the maximum of the Krull dimensions of the valuation rings of $Q(R)$ that contain $R$, is $2$.  Thus $\dim R[X_1,X_2] = 4$ by \cite[Theorem 6]{Arn}.  Although $R$ is $1$-subperfect, $R[X_1,X_2]$ is not $3$-subperfect, since by Corollary~\ref{Krull} the Krull dimension of a $3$-subperfect ring is~$3$. 
 
   \end{example}
   
    \begin{example} \label{Prufer} A Pr\"ufer domain cannot be $n$-subperfect whenever $n > 1$.   This is because a Pr\"ufer domain $R$ cannot have a regular sequence of length greater than~$1$. Indeed, if $x,y$ is a regular sequence in $R$, then $xR \cap yR = xyR$. If $M$ is a maximal ideal containing $x$ and $y$, then since $R_M$ is a valuation domain, this implies $xR_M = xyR_M$ or $yR_M = xyR_M$, contradicting that neither $x$ nor $y$ is a unit in $R_M$.  {Consequently, an $n$-subperfect Pr\"ufer domain is an almost perfect domain. But for modules over such domains, w.d.$\le 1$ implies p.d.$\le 1$ (see \cite[Theorem 7.1]{FS}), thus any $n$-subperfect Pr\"ufer domain -- if not a field -- must be a Dedekind domain. Dedekind domains are trivially 1-subperfect.}
     \end{example} 
   
    Our next source of examples involves the idealization of a module,   as defined in Example~\ref{first ex}. For an $R$-module $N$, we denote by $R \star N$ the idealization of $N$. 
%
   It is well known that if $R$ is a Cohen--Macaulay ring and $N$ is a finitely generated maximal Cohen--Macaulay module, then $R \star N$ is a Cohen--Macaulay ring.   In Corollary~\ref{idealization cor}, we prove the analogue of this statement for $n$-subperfect rings. This follows from a more general lifting property {of $n$-subperfectness}:

     \begin{theorem} {Let $I$ be  an ideal of the ring  $R$ such that $I^2 = 0$ and $R/I$ is $n$-subperfect for some $n \geq 0$. If every $(R/I)$-regular sequence in $R$ is  also $I$-regular}, then $R$ is $n$-subperfect.
   \end{theorem}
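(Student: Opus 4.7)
I would prove this by induction on $n$, using Corollary~\ref{reduction} as the key reduction: $R$ is $n$-subperfect (for $n \geq 1$) if and only if $R$ is subperfect and $R/xR$ is $(n-1)$-subperfect for every regular $x \in R$. The base case $n = 0$ asks that $R$ be perfect. The hypothesis is vacuous here, and $R$ inherits perfectness from $R/I$: since $I$ is nil, the maximal ideals of $R$ correspond to those of $R/I$, so $R$ is semilocal and $\dim R = \dim R/I = 0$. For a sequence $r_1, r_2, \ldots$ in the Jacobson radical of $R$, T-nilpotency of $J(R/I)$ yields $m$ with $r_1 \cdots r_m \in I$, and then $(r_1 \cdots r_m)(r_{m+1} \cdots r_{2m}) \in I^2 = 0$.

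For the inductive step, I first show that $R$ is subperfect via Lemma~\ref{H}(ii). Since $I$ is nil, the minimal primes of $R$ correspond to the finitely many minimal primes of $R/I$, and the doubling argument from the base case shows the nilradical of $R$ is T-nilpotent. The one use of the main hypothesis here concerns the condition that every zero-divisor of $R$ lies in a minimal prime: if $ry = 0$ with $y \notin I$, then $\bar r \bar y = 0$ in the subperfect ring $R/I$ so $\bar r$, hence $r$, lies in a minimal prime; if instead $0 \ne y \in I$, then $\bar r$ cannot be $R/I$-regular, for otherwise the hypothesis would force $r$ to be $I$-regular, contradicting $ry = 0$.

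Next, fix a regular $x \in R$. In a subperfect ring the regular elements are precisely those outside every minimal prime; since minimal primes of $R$ and $R/I$ correspond bijectively, $\bar x$ is regular in the subperfect ring $R/I$. Set $\bar R = R/xR$ and $\mathcal I = (I + xR)/xR$. Then $\mathcal I^2 = 0$ because $(I+xR)^2 \subseteq xR$, and $\bar R/\mathcal I \cong (R/I)/\bar x(R/I)$ is $(n-1)$-subperfect by Proposition~\ref{factor prop} applied to $R/I$. To close the induction via the pair $(\bar R, \mathcal I)$, it remains to verify that every $(\bar R/\mathcal I)$-regular sequence in $\bar R$ is $\mathcal I$-regular.

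This last verification is the main obstacle. The key identification is $\mathcal I \cong I/xI$, which rests on $I \cap xR = xI$: if $xr \in I$ then $\bar x \bar r = 0$ in $R/I$, so $\bar r = 0$, i.e., $r \in I$. Given lifts $y_1, \ldots, y_t$ of a regular sequence in $\bar R/\mathcal I = (R/I)/\bar x (R/I)$, one sees that $\bar x, \bar y_1, \ldots, \bar y_t$ is regular in $R/I$, so that $x, y_1, \ldots, y_t$ is an $(R/I)$-regular sequence in $R$. By the hypothesis on $R$ this sequence is $I$-regular, and the standard principle that regularity of a prefixed sequence on a module implies regularity of the tail on the quotient by the leading element gives that $y_1, \ldots, y_t$ is regular on $I/xI \cong \mathcal I$. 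The induction hypothesis now yields that $\bar R$ is $(n-1)$-subperfect, and Corollary~\ref{reduction} completes the proof.
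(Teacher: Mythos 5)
Your proposal is correct and follows essentially the same route as the paper's proof: establish subperfectness of $R$ from Lemma~\ref{H} using the T-nilpotency of $I$ and the correspondence of minimal primes, then induct on $n$ by verifying for each regular $x$ that the pair $(R/xR,\ (I+xR)/xR)$ again satisfies the three hypotheses (squaring to zero, $(n-1)$-subperfect quotient via Proposition~\ref{factor prop}, and the regular-sequence condition via $I \cap xR = xI$), closing with Corollary~\ref{reduction}. The only blemishes are cosmetic (e.g., the index $2m$ in the T-nilpotency doubling argument should be an index supplied by T-nilpotency applied to the tail sequence).
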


 \begin{proof}   First we show that $R$ is subperfect. 
 {If $N$ is the nilradical of $R$,  then $N/I$ is the nilradical of the $n$-subperfect ring $R/I$, hence T-nilpotent. }Therefore, $N$ as an extension of the nilpotent $I$ by the T-nilpotent $N/I$ is T-nilpotent. Suppose $r + N \ (r \in R)$ is a regular element in $R/N$; then $r +N/I$ is regular in  $(R/I)/(N/I)$, so Lemma \ref{H}(iii) shows that $r +I$ is regular in $R/I$.  Since $r$ is $(R/I)$-regular, $r$ is $I$-regular by assumption. If $r$ is both $(R/I)$-regular and $I$-regular, then it is regular in $R$.  From Lemma \ref{H}(iii) we conclude that $R$ is subperfect.  
 
 We claim next that each $r \in R^\times$ is $(R/I)$-regular. Since $R/I$ is subperfect, there are finitely many prime ideals $P_1,\ldots,P_m$ of $R$ that are minimal over $I$ and whose images in $R/I$ contain every zero-divisor in $R/I$.  {Since $I$ is in the nilradical of $R$,  these primes are also the minimal prime ideals of $R$.}     If $r \in R^\times$, then $r \not \in P_1 \cup \cdots \cup P_m$, so the image of $r$ in $R/I$ is not a zero-divisor. 
 This shows that {the regular elements of  $R$ are $(R/I)$-regular.}

  We prove now using induction that $R$ is $n$-subperfect. If $n  = 0$, then $R/I$ is perfect and hence zero-dimensional. Since $I^2 = 0$, $R$ is zero-dimensional. We have established that $R$ is subperfect, so {from} $R = Q(R)$ we conclude that $R$ is perfect.

     Now let $n >0$, and suppose the lemma has been proved for all $k <n$. We have {already} shown that $R$ is subperfect.   {We claim that  $A := R/rR$ is $(n-1)$-subperfect  for every $r \in R^\times$.} By the induction hypothesis, it suffices to show: \smallskip
     
     (i) $(IA)^2 = 0$,  
     
     (ii) $A/IA$ is $(n-1)$-subperfect, and 
     
     (iii) every $A/IA$-regular sequence in $A$ is $IA$-regular. 
    
     \smallskip 
     It is clear that $(IA)^2 = 0$. 
       To verify (ii), we use the fact already established that  if $r \in R^\times$,  then $r+I$ is {regular} in $R/I$.    Since $R/I$ is $n$-subperfect, Proposition~\ref{factor prop} implies $R/(rR+I) $ is $(n-1)$-subperfect.  {In view of the isomorphism $A/IA \cong R/(rR+I)$,} statement (ii) follows.

     To verify (iii), suppose $a_1,\ldots,a_t$ is an $A/IA$-regular sequence in $A$. 
   {If we write $a_i = r_i + rR$,}  then $r_1,\ldots,r_t$ is an $A/IA$-regular sequence in $R$. Since $r \in R^\times$ and $A/IA \cong R/(rR+I)$, we have that $r,r_1,\ldots,r_n$ is an $R/I$-regular sequence. By assumption, $r,r_1,\ldots,r_t$ is also an $I$-regular sequence, so $r_1,\ldots,r_t$ is an $I/rI$-regular sequence. As established, every regular element of $R$ is a regular element in $R/I$. Thus $I \cap rR = rI$, and it follows that 
      $IA = (I + rR)/rR \cong I/(I \cap rR) = I/rI$. Since $r_1,\ldots,r_t$ is an $(I/rI)$-regular sequence in $R$, we conclude that $a_1,\ldots, a_t$ is an $IA$-regular sequence in $A$.  Thus every $A/IA$-regular sequence in $A$ is $IA$-regular.
      
      Having verified (i), (ii) and (iii), we conclude from the induction hypothesis that $A=R/rR$ is $(n-1)$-subperfect. Since $R$ is subperfect and $R/rR$ is $(n-1)$-subperfect for each $r \in R^\times$, {Corollary~\ref{reduction}} implies $R$ is $n$-subperfect. 
   \end{proof}

   \begin{corollary} \label{idealization cor} Let $R$ be an $n$-subperfect ring, and let $N$ be an $R$-module such that every regular sequence in $R$ extends to a regular sequence on $N$. Then $R \star N$ is an $n$-subperfect ring. \qed
   \end{corollary}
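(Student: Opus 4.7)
The plan is to apply the preceding theorem directly to the ring $S := R \star N$ with the ideal $I := 0 \star N \subseteq S$. The verification amounts to checking three structural facts about the idealization and then translating the regularity hypothesis on $N$ into the hypothesis required by the theorem.

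First, I would record the basic features of the idealization. By definition of the multiplication in $R \star N$, one has $(0,n_1)(0,n_2) = (0, 0\cdot n_2 + 0\cdot n_1) = 0$, so $I^2 = 0$. The natural projection $(r,n) \mapsto r$ identifies $S/I$ with $R$, so $S/I$ is $n$-subperfect by hypothesis. These settle two of the three hypotheses of the theorem.

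The substantive step is to verify that every $(S/I)$-regular sequence in $S$ is also $I$-regular. Under the identification $S/I \cong R$, a sequence $(r_1,m_1), \ldots, (r_t,m_t)$ in $S$ is $(S/I)$-regular precisely when $r_1, \ldots, r_t$ is a regular sequence in $R$. On the other hand, because $I^2 = 0$, the $S$-action on $I$ factors through $S/I \cong R$; under the obvious $R$-module isomorphism $I \cong N$ given by $(0,n) \leftrightarrow n$, multiplication by $(r_i,m_i)$ on $I$ corresponds to multiplication by $r_i$ on $N$. Hence $I$-regularity of the sequence $(r_1,m_1), \ldots, (r_t,m_t)$ is equivalent to $r_1, \ldots, r_t$ being a regular sequence on $N$. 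The hypothesis that every regular sequence of $R$ remains regular on $N$ therefore yields exactly the implication required by the theorem.

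With all three hypotheses of the preceding theorem verified, I would conclude that $S = R \star N$ is $n$-subperfect. The main point requiring care is the identification of $I$ as an $S$-module with $N$ as an $R$-module, so that the two notions of regularity line up correctly; once this is written down, no additional work is needed beyond invoking the theorem.
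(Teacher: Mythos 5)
Your proposal is correct and matches the paper's intent exactly: the corollary is stated with a \qed precisely because it is the immediate specialization of the preceding lifting theorem to $S = R\star N$ with $I = 0\star N$, using $I^2=0$, $S/I\cong R$, and the $R$-module identification $I\cong N$ to translate $I$-regularity into $N$-regularity. Your write-up simply makes these identifications explicit, which is all that is needed.
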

   
   \begin{example} \label{idealization ex} Corollary~\ref{idealization cor} implies that if $R$ is a local Cohen--Macaulay ring,  and if $N$ is a balanced big Cohen--Macaulay $R$-module, then $R \star N$ is $n$-subperfect {for $n = \dim R$}. 
      {Choosing $N$ to be an infinite rank free $R$-module, we obtain   a non-noetherian $n$-subperfect ring $R \star N$.} \smallskip
   
   More interesting choices are possible for $N$. For example if $R$ is an excellent local Cohen--Macaulay domain of positive characteristic and $R^+$ is the integral closure of $R$  in the algebraic closure of the quotient field of $R$, then $R \star R^+$ is a non-noetherian $n$-subperfect ring, since $R^+$ is a balanced big Cohen--Macaulay module that is not  finitely generated \cite[Theorem 1.1]{HH}. 
   \end{example} 
   
   \begin{example} \label{infpoly} {Let $R$ be an $n$-subperfect ring and $\{X_i:i \in I\}$ be a collection of indeterminates for $R$. Let
   $$S=R[X_i:i \in I]/(X_i:i \in I)^2.$$  The maximal ideal $N = (X_i:i \in I)/(X_i:i \in I)^2$ of $S$ is nilpotent of index $2$ and  is a free $R$-module with basis the images of the $X_i$ in $N$.  As   
    $S \cong R \star N$, the ring $S$ is a special case of the construction in Example~\ref{idealization ex}; therefore, $S$ is $n$-subperfect. If the index set $I$ is infinite, then $S$ is  not noetherian. }
   \end{example}

   So far, our non-noetherian examples, at least for $n>1$,  have involved $n$-subperfect rings with zero-divisors. 
   Our next source of examples produces non-noetherian $n$-subperfect domains, albeit in a non-transparent way.

    \begin{theorem} \label{part}
    Let $S$ be a local  Cohen--Macaulay  domain such that $Q(S)$ is separably generated, and has positive characteristic and uncountable transcendence degree over its prime subfield.  If $n:=\dim S \geq 1$, then 
    there exists a non-noetherian $n$-subperfect subring $R$ of $S$ such that $Q(R) = Q(S)$ and  $S$ is integral over $R$.  
     \end{theorem}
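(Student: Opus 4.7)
The strategy is to construct $R$ as a subring of $S$ sandwiched between $S^p$ and $S$, where $p = \mathrm{char}(Q(S))$. The containment $S^p \subseteq R$ automatically yields that $S$ is integral over $R$, since every $s \in S$ is a root of $x^p - s^p \in R[x]$, and the extension $Q(S)/Q(R)$ is purely inseparable of exponent at most $1$. By Corollary~\ref{CM char}, the local Cohen--Macaulay ring $S$ is $n$-subperfect. Hence, if I can arrange that the inclusion $R \hookrightarrow S$ splits as a map of $R$-modules, Theorem~\ref{summand} will immediately imply that $R$ is $n$-subperfect.

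The construction of $R$ uses the hypotheses crucially. Since $Q(S)$ is separably generated over $\mathbb{F}_p$ with uncountable transcendence degree, the purely inseparable extension $Q(S)/Q(S)^p$ has uncountable degree, and hence admits an uncountable $p$-basis $\{b_\alpha : \alpha \in I\}$, which may be taken inside $S$ after clearing denominators. I will choose $R$ to be the subring of $S$ generated over $S^p$ by a carefully selected set of monomials in the $b_\alpha$. The selection is designed so that: (1) the included monomials generate $S$ as an $R$-module and admit an $R$-module complement inside $S$, providing the splitting of $R \hookrightarrow S$; (2) $Q(R) = Q(S)$, because the selected generators include enough of the $p$-basis to recover $Q(S)$ over $Q(S)^p$; and (3) infinitely many of the omitted but $p$-thickened generators produce a strictly ascending chain of ideals in $R$, forcing non-noetherianness. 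The fraction-field identity in (2) already follows from (1), since a retraction of $R \hookrightarrow S$ extends to $Q(R)$ and must be the identity when $Q(R)$ is $p$-pure in $Q(S)$.

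The main obstacle is arranging conditions (1)--(3) simultaneously. The splitting in (1) is the most delicate point: it requires the chosen monomials to be part of a free generating set of $S$ over $S^p$, so that discarding the remaining $S^p$-basis monomials defines an $R$-linear projection $S \twoheadrightarrow R$. The uncountability of $I$ is the essential resource here: a finite or countable $p$-basis would force $R$ either to coincide with $S$ or to fail (2). With $I$ uncountable, I can carve out a proper subring $R$ that is both a direct summand of $S$ (yielding the descent of $n$-subperfectness via Theorem~\ref{summand}) and non-noetherian (e.g.\ because the chain of ideals $(b_{\alpha_1}^p) \subsetneq (b_{\alpha_1}^p, b_{\alpha_2}^p) \subsetneq \cdots$ in $R$ is witnessed to be strict by the projection of (1)).
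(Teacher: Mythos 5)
There is a fatal obstruction to your plan, and it sits exactly at the junction of your conditions (1) and (2). You want $R \subsetneq S$ with $Q(R) = Q(S)$ and with the inclusion $R \hookrightarrow S$ split as a map of $R$-modules, so that Theorem~\ref{summand} applies. But these requirements are incompatible for a proper subring of a domain. If $Q(R)=Q(S)$, then every $s\in S$ satisfies $bs\in R$ for some nonzero $b\in R$, so $S/R$ is a torsion $R$-module. On the other hand, a splitting $S = R \oplus C$ exhibits $C \cong S/R$ as an $R$-submodule of the domain $S$, hence as a torsion-free $R$-module. A torsion, torsion-free module is zero, so $C=0$ and $R=S$. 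Thus any $R$ for which your step (1) succeeds is all of $S$, and in particular noetherian; conversely, any proper subring $R$ with $Q(R)=Q(S)$ can never be an $R$-module direct summand of $S$. (Your remark that (2) ``already follows from (1)'' is a symptom of this tension: (1) together with (2) forces $R=S$.) So Theorem~\ref{summand} cannot be the engine of this proof, and no choice of monomials in a $p$-basis will rescue the strategy.

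The paper's proof goes an entirely different way. It takes $N$ to be a free $S$-module of infinite rank and invokes the ``strongly twisted subring'' construction of \cite{OCounter} (this is where the hypotheses of positive characteristic, separable generation, and uncountable transcendence degree are consumed); the output is a subring $R$ with $Q(R)=Q(S)$ and $S$ integral over $R$, together with a faithfully flat ring embedding $f:R\to S\star N$ inducing isomorphisms $R/aR \cong (S\star N)/a(S\star N)$ for suitable $a$. Since $S\star N$ is $n$-subperfect by Corollary~\ref{idealization cor}, subperfectness of each ideal generated by a regular sequence is transferred back to $R$ through these isomorphisms, and one concludes with Corollary~\ref{Krull2}; non-noetherianness of $R$ comes from \cite[Theorem 5.2]{OCounter}. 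If you want a descent argument in the spirit of Theorem~\ref{summand}, the descent must happen along a map into a ring with zero-divisors (such as the idealization $S\star N$), not along a split inclusion into the domain $S$ itself.
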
 
     
     \begin{proof}  Let $N$ be a free $S$-module of infinite rank. Applying \cite[Theorem 3.5]{OCounter} to $S$ and $N$, we obtain  
    a subring $R$ of $S$ such that
    $R$ is ``strongly twisted by $N$''. We omit the definition of this notion here, but we use the fact that by \cite[Theorems~4.1 and~4.6]{OCounter} this implies 
    
    (i) 
    there is a subring $A$ of $R$ such that $S/A$ is a torsion-free divisible $A$-module and  $I \cap A \ne 0$ for each ideal $I$ of $S$;
    
    (ii) 
     $R$ has the same quotient field as $S$ and $S$ is an integral extension of $R$; and 
     
     (iii)  there is  a faithfully flat ring embedding $f:R \rightarrow S \star N$ such that for each $0 \ne a \in A$,  the induced map $f_a:R/aR \rightarrow (S\star N)/a(S \star N)$ is an isomorphism. 
     
      {We show that for each nonempty regular sequence $x_1,\ldots, x_t$ in  $R$, the ring $R/(x_1,\ldots,x_t)R$ is subperfect.}  Since $f$  is faithfully flat,  $f(x_1),\ldots,f(x_t)$ is a regular sequence in $T:=S \star N$.  By Corollary~\ref{idealization cor}, $T$ is an $n$-subperfect ring. Thus $f(x_1),\ldots,f(x_t)$ is a subperfect sequence in $T$.  Since for each $0 \ne a \in A$, the map $f_a$ is an isomorphism, we have $T = f(R) + f(a)T$.  By (i) and (ii), the fact that $S/R$ is a torsion $R$-module implies there is $0 \ne a \in (x_1,\ldots,x_t)R \cap A$.  Hence
      $$T = f(R) + (f(x_1),\ldots,f(x_t))T.$$ 
   Moreover, since $f$ is faithfully flat, we have 
   $$\left(f(x_1),\ldots,f(x_t)\right)T \cap f(R) = (f(x_1),\ldots,f(x_t))f(R).$$  Therefore, 
     \begin{eqnarray*} T/(f(x_1),\ldots,f(x_t))T & = & (f(R) + (f(x_1),\ldots,f(x_t))T)/(f(x_1),\ldots,f(x_t))T \\
     & \cong & f(R)/((f(x_1),\ldots,f(x_t))T \cap f(R))  \\
     & = & f(R)/(f(x_1),\ldots,f(x_t))f(R)  \\
     & \cong & R/(x_1,\ldots,x_t)R.
     \end{eqnarray*}  
     Consequently, since $f(x_1),\ldots,f(x_t)$ is a subperfect sequence in $T$, it follows that $x_1,\ldots,x_t$ is a subperfect sequence in $R$.  This proves that every regular sequence in $R$ is subperfect.

   Finally,  since $S$ is integral over $R$ and $S$ is local, $R$ is also local and has the same Krull dimension as $S$. By Corollary~\ref{Krull}, $n = \dim S = \dim R$.   {Taking into account that} every regular sequence in $R$ is subperfect,  Corollary~\ref{Krull2}  implies that $R$ is $n$-subperfect.  
   By \cite[Theorem 5.2]{OCounter}, the fact that $N$ is a free $S$-module of infinite rank implies $R$ is not noetherian.     
    \end{proof} 
    
    \begin{example} \label{domain} Let $p$ be a prime number, and let ${\mathbb F}_p$ denote the field with $p$ elements. Suppose $k$ is a purely transcendental extension of ${\mathbb F}_p$ with uncountable transcendence degree.   Then 
     $S = k[X_1,\ldots, X_n]_{(X_1,\ldots,X_n)}$  is a local $n$-subperfect domain (in fact, a Cohen--Macaulay ring) meeting the requirements of Theorem~\ref{part}. {Thus $S$ contains a non-noetherian $n$-subperfect subring $R$}  having the same quotient field as $S$. 
     \end{example}

    Our final source of examples involves local Cohen--Macaulay rings that have a coefficient field. The next theorem shows that restriction to a smaller coefficient field can produce  examples of non-noetherian $n$-subperfect rings. 
    
     \begin{theorem} \label{D+M} Let  $S$ be a local  Cohen--Macaulay ring containing a field $F$ such that $S = F+M$, where $M$ is the maximal ideal  of $S$. For each subfield $k$ of $F$, {the local ring $R = k+M$ is $n$-subperfect for $n = \dim S$.} The ring $R$ is noetherian if and only if $F/k$ is a finite extension.
  \end{theorem}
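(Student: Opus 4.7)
My plan is to use the pullback description of $R$ inside $S$ to reduce the $n$-subperfect property to an inductive claim about $R/xR$, and to handle the noetherian characterization via Eakin--Nagata and an $M/M^2$ dimension comparison. First I would establish the structural groundwork: since $R/M \cong k$ is a field, $R$ is local with maximal ideal $M$, and $R = \{s \in S : \bar{s} \in k\}$ realizes $R$ as the pullback of $k \hookrightarrow F$ along the residue map $S \twoheadrightarrow S/M = F$. Because $M$ is the conductor of $R$ in $S$, the classical pullback dictionary gives an inclusion-preserving bijection $\mathrm{Spec}(S) \setminus \{M\} \longleftrightarrow \mathrm{Spec}(R) \setminus \{M\}$ via contraction, which acts as the identity on the underlying sets of primes. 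Hence $\dim R = \dim S = n$, and for $n \geq 1$ the minimal primes of $R$ and $S$ coincide. Combined with the fact that the nilradical of $R$ equals the (nilpotent) nilradical of $S$, and a case analysis showing that the zero-divisors of $R$ are precisely those $r \in R$ that are zero-divisors in $S$ (the subtle case $Ms = 0$ with $rs = 0$, $s \neq 0$, being handled by showing $s \in M \subseteq R$ via Nakayama applied to the noetherian ring $S$), Lemma~\ref{H}(ii) yields that $R$ is subperfect.

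Next I would induct on $n$ using Corollary~\ref{reduction}: for each regular $x \in R$, I must show $R/xR$ is $(n-1)$-subperfect. Since $x$ remains regular in $S$, $S/xS$ is a local Cohen--Macaulay ring of dimension $n-1$ with $S/xS = F + M/xS$, and the subring $R/xS = k + M/xS$ has the same pullback form with $S/xS$ in place of $S$. The induction hypothesis then yields that $R/xS$ is $(n-1)$-subperfect. To pass from $R/xS$ back to $R/xR$, consider the surjection $R/xR \twoheadrightarrow R/xS$ with kernel $I := xS/xR$; this kernel is nilpotent of square zero since $x^2 S \subseteq xM \subseteq xR$. I would then attempt to invoke the $I^2 = 0$ lifting technique used in the proof of Corollary~\ref{idealization cor}, which requires verifying that every $R/xS$-regular sequence in $R/xR$ is also $I$-regular. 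This compatibility verification is where I expect the main obstacle to lie: the kernel $I$ is isomorphic to $S/R \cong F/k$ as an $R$-module with $M$ acting trivially, so the interplay between regular sequences on $R/xS$ and the $F$-vector space structure on $I$ must be tracked carefully to close the induction.

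Finally, for the noetherian characterization: if $[F:k] < \infty$, then a $k$-basis of $F$ generates $S$ as an $R$-module, since $S = F + M$ and $M \subseteq R$, so $S$ is a finite $R$-module; because $S$ is noetherian, Eakin--Nagata implies $R$ is noetherian. Conversely, if $R$ is noetherian and $\dim S \geq 1$, then $M$ is a finitely generated $R$-ideal, so $\dim_k M/M^2 < \infty$; but $M/M^2$ is simultaneously an $S/M = F$-vector space, and $\dim_F M/M^2 \geq 1$ by Nakayama applied in the noetherian ring $S$ (since $\dim S \geq 1$ forces $M \neq M^2$). Thus $\dim_k M/M^2 = [F:k] \cdot \dim_F M/M^2 \geq [F:k]$, forcing $[F:k] < \infty$.
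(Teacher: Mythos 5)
Your overall strategy diverges from the paper's: the paper never inducts on $n$ via Corollary~\ref{reduction}. Instead it fixes an arbitrary regular sequence $x_1,\ldots,x_t$ in $R$ and shows directly that $Q(R/(x_1,\ldots,x_t)R)$ is zero-dimensional, semilocal and has T-nilpotent nilradical, by comparing the expansion of an element of $(x_1,\ldots,x_t)S$ with $F$-coefficients against the expansion of its multiple with $k$-coefficients (using unmixedness of $(x_1,\ldots,x_t)S$ in the Cohen--Macaulay ring $S$ together with Krull's height theorem), and then invokes Corollary~\ref{localization cor}. Your structural groundwork (primes of $R$ equal primes of $S$, $Q(R)=Q(S)$, subperfectness of $R$) and your Eakin--Nagata/$M/M^2$ treatment of the noetherian statement are fine and essentially parallel to, or more self-contained than, the paper's.

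The gap is in your inductive step, and the obstacle you flag is not a technicality but a genuine failure. The kernel $I=xS/xR$ is annihilated by $M/xR$: indeed $M\cdot xS=xMS=xM\subseteq xR$ because $M\subseteq R$. Since $I\cong S/R\cong F/k$ is nonzero when $k\subsetneq F$, every non-unit of $R/xR$ kills the nonzero ideal $I$. Consequently (a) no non-unit of $R/xR$ is $I$-regular, so the hypothesis of the square-zero lifting theorem (``every $(R/I)$-regular sequence is $I$-regular'') fails as soon as $R/xS$ has a regular non-unit, i.e., as soon as $n\geq 2$; and (b) every non-unit of $R/xR$ is a zero-divisor, so $R/xR$ cannot be $(n-1)$-subperfect for $n\geq 2$ --- the intermediate claim you are inducting on is false, not merely hard to verify. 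Concretely, take $S=F[[X,Y]]$, $k\subsetneq F$, $\alpha\in F\setminus k$, and $x=X$: then $X\alpha\in M\setminus XR$ while $m\cdot X\alpha=X(m\alpha)\in XM\subseteq XR$ for every $m\in M$, so every non-unit of $R$ is a zero-divisor modulo $XR$ and $X$ is already a maximal regular sequence of length $1<2=\dim R$. This shows your route cannot be repaired within its own framework; it also puts the statement itself in doubt for $n\geq 2$, and the corresponding step in the paper's proof --- the deduction that $(\beta_i-\alpha_i x)x_i\in(x_1,\ldots,x_{i-1})R$, where the discarded terms $xz-w$ lie only in $(x_1,\ldots,x_t)M$ --- should be checked against the same example.
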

 
  \begin{proof} Let $k$ be a subfield of $F$, and let $R = k + M$. Then $R$ is  a local ring with maximal ideal $M$. It is clear that every prime ideal of $S$ is a prime ideal of $R$. 
To {verify the converse,} let  $P$ be a non-maximal prime ideal of $R$. To show  that $P$ is in fact an ideal of $S$, let $s \in S$. Then $sP \subseteq sM \subseteq R$, and also, $(sP)M = P(sM) \subseteq P$ {because} $sM \subseteq R$.  Since $M \not \subseteq P$, we conclude that $sP \subseteq P$, which proves that $P$ is an ideal of $S$. To see that {$P$ is  prime in $S$,} let $x,y \in S$ with $xy \in P$. If  one of $x$ or $y$ is a unit in $S$, then the other is in $P$. Otherwise, if neither $x$ nor $y$ are units, then {necessarily} $x,y \in M \subseteq R$, and since $P$ is a prime ideal of $R$, one of $x,y$ is in $P$.  Thus $P$ is a prime ideal of $S$, and this shows that 
 the prime ideals of $R$ are precisely {those} of $S$. 
 
  We show now that $R$ is $n$-subperfect, where $n = \dim S$. 
  By \cite[Lemma 1.1.4, p.~5]{FHP},  $Q(R) = Q(S)$, so $R$ is a subperfect ring, since the total quotient ring $Q(S)$ of the Cohen--Macaulay ring $S$  is artinian.
  Let  $x_1,\ldots,x_t $ be a regular sequence in $R$, and $I = (x_1,\ldots,x_t)R$.   We claim that $R/I$ is a subperfect ring.   The height of $I$ in $R$ is at least $t$, and since $R$ and $S$ share the same prime ideals,  the height of $IS$ is also at least $t$.  Krull's height theorem implies then  that the height of the $t$-generated ideal $IS$ is  $t$. 
 Since $S$ is a Cohen--Macaulay ring, the ideal  $IS$ is  unmixed.   We use this to show next that $Q(R/I)$ is zero-dimensional.  

To this end, we {prove} that every zero-divisor of $R/I$ is contained in a minimal prime ideal of $R$.  Let  $x,y \in R$ such that $xy \in I$ and $y \not \in I$.
 Suppose  by way of contradiction that  $x$ is not {contained} in any minimal prime ideal of $I$. Since $I$ and $IS$ share the same minimal primes,  the image of $x$ in $S/IS$ {does not belong to} any minimal prime ideal of $S/IS$. However, $IS$ is unmixed, so necessarily $y \in IS$.  Therefore, using the fact that $S = F+M$, we can write 
 \begin{center}$y = \alpha_1x_1 + \cdots + \alpha_t x_t + z$ \qquad for  $\alpha_1,\ldots,\alpha_t \in F$\ and\ $z \in (x_1,\ldots,x_t)M$.
   \end{center}
  Similarly, since $xy \in I$ and $R = k + M$, we have
  \begin{center}  $xy  = \beta_1x_1 + \cdots + \beta_t x_t + w$ \qquad for  $\beta_1,\ldots,\beta_t \in k$\  and \ $w \in (x_1,\dots,x_t)M$.   \end{center}
    Let $i$ be the largest {index}   such that at least one of $\alpha_i,\beta_i$ is not $0$.
    Using the preceding  expressions for $y$ and $xy$, we obtain
       $$\beta_1x_1 + \cdots + \beta_i x_i + w = \alpha_1xx_1 + \cdots + \alpha_i xx_i + xz.$$
Therefore,   
    $$  (\beta_i - \alpha_ix)x_i \in (x_1,\ldots,x_{i-1})R.$$
    Since $\beta_i - \alpha_i x \in k+M = R$ and $x_1,\ldots,x_i$ is a regular sequence in $R$,
    we have  $\beta_i -\alpha_i x \in (x_1,\ldots,x_{i-1})R$.   The fact that $x$ is a non-unit in $R$ implies $\beta_i \in M$, so $\beta_i = 0$ and hence, by the choice of $i$, $\alpha_i \ne 0$. 
    Since the prime ideals of $S$ are the same as the prime ideals of $R$, $\sqrt{(x_1,\ldots,x_{i-1})R}$ is an ideal of $S$.  Also, $\alpha_i$ is a unit in $S$ and  $\alpha_i x \in (x_1,\ldots,x_{i-1})R$, so $x \in \sqrt{(x_1,\ldots,x_{i-1})R} \subseteq \sqrt{I}$. However,  $x$ was chosen not to be contained in  any minimal prime ideal of $I$. This contradiction implies that $x$ must be in some minimal prime ideal of $I$, {establishing} that $Q(R/I)$ is a zero-dimensional ring. 
Since $I$ and $IS$ share the same minimal prime ideals, {$I$ has only finitely many minimal primes, so $Q(R/I)$ is also semilocal.}

 It remains to show that the nilradical of $R/I$ is T-nilpotent, and to prove this, it suffices to show that some power of $\sqrt{I}$ is contained in $I$.  
    Since $\sqrt{I}$ is  a finitely generated ideal of the noetherian ring $S$ and $ \sqrt{I} = \sqrt{IM}$, with $IM$ an ideal of $S$, there is $t>0$ such that $(\sqrt{I})^t \subseteq IM \subseteq I$. Therefore, $R/I$ is subperfect, which completes the proof that every regular sequence in $R$ is subperfect. Since $R$ and $S$ share the same maximal ideal, Corollary~\ref{localization cor}  implies $R$ is $n$-subperfect for $n = \dim S$. {Finally,} it is straightforward to check that $R$ is noetherian if and only if $F/k$ is a finite field extension; see \cite[Proposition 1.1.7, p.~7]{FHP}.   \end{proof}

    \begin{example} Let $S 
   = F[[X_1,\ldots,X_n]]/I$, where $F$ is a field, $X_1,\ldots,X_n$ are indeterminates for $F$, and $I$ is an ideal such that $S$ is Cohen--Macaulay. Theorem~\ref{D+M} implies that  for each subfield $k$ of $F$, 
   \begin{center}$R = \{f + I \in S:f \in F[[X_1,\ldots,X_n]]$ and $ f(0,\ldots,0) \in k\}$
   \end{center} is an $n$-subperfect ring. 
   \end{example}


\end{document}